\documentclass[12pt,a4paper]{amsart}


\usepackage{amsmath,amssymb,amsthm,enumerate,graphicx}
\usepackage[all]{xy}
\usepackage{calligra,mathrsfs}
\usepackage{hyperref}
\usepackage{bm,color}
\usepackage{amsfonts}
\usepackage{amscd}
\usepackage[latin2]{inputenc}
\usepackage{t1enc}
\usepackage[mathscr]{eucal}
\usepackage{indentfirst}
\usepackage{graphics}
\usepackage{pict2e}
\usepackage{epic}
\numberwithin{equation}{section}
\usepackage[margin=2.9cm]{geometry}
\usepackage{epstopdf} 
\usepackage{tikz-cd}
\usepackage{mathtools}
\usepackage{cases}


\theoremstyle{plain}
\newtheorem{thm}{Theorem}[section]


\theoremstyle{ams-restatedtheorem}
\newtheorem*{restatedtheorem*}{}
{\end{restatedtheorem*}}
\makeatother

\theoremstyle{question}
\newtheorem{question}[thm]{Question}

\theoremstyle{definition}
\newtheorem{defn}[thm]{Definition}

\theoremstyle{proposition}
\newtheorem{prop}[thm]{Proposition}

\theoremstyle{remark}
\newtheorem{rem}[thm]{Remark}

\theoremstyle{remark}

\theoremstyle{corollary}
\newtheorem{cor}[thm]{Corollary}

\theoremstyle{lemma}
\newtheorem{lem}[thm]{Lemma}

\theoremstyle{conjecture}
\newtheorem{cnj}[thm]{Conjecture}

\theoremstyle{question}


\newcommand{\widesim}[2][1.5]{
  \mathrel{\overset{#2}{\scalebox{#1}[1]{$\sim$}}}
}

\newcommand{\bigcdot}{\boldsymbol{\cdot}}
\newcommand{\Hom}{\text{Hom}}
\newcommand{\bs}{\backslash}
\newcommand{\mf}{\mathfrak}
\newcommand{\mc}{\mathcal}
\newcommand{\grade}{\text{grade}}

\DeclareMathOperator{\Homs}{\mathscr{H}\text{\kern -3pt {\calligra\large om}}\,}
\DeclareSymbolFont{extraup}{U}{zavm}{m}{n}
\DeclareMathSymbol{\varheart}{\mathalpha}{extraup}{86}
\DeclareMathSymbol{\vardiamond}{\mathalpha}{extraup}{87}
\DeclareMathSymbol{\varspade}{\mathalpha}{extraup}{81}

     \title{Homogeneous Liaison and the Sequentially Bounded Licci Property}

     \author{Jesse Keyton}
     \address{University of Arkansas, USA}
     \email{jskeyton@uark.edu}

     \date{August 1, 2019}

     \keywords{homogeneous liaison}
     \subjclass{commutative algebra}

     \begin{document}
     \begin{abstract}
     In CI-Liaison, significant effort has been made to study ideals that are in the linkage class of a complete intersection, which are called licci ideals. In a polynomial ring, recently E. Chong defined a "sequentially bounded" condition on the degrees of the forms generating the regular sequences, and used this condition to find a large class of licci ideals satisfying the Eisenbud-Green-Harris Conjecture (among them, grade $3$ homogeneous Gorenstein ideals). He raised the question of whether all homogeneous licci ideals are sequentially bounded licci. In this paper we construct a class of examples that are homogeneously licci, but not sequentially bounded licci, thus answering his question in the negative. The structure of certain Betti tables plays a central role in our proof.
     \end{abstract}
     \maketitle

\section{Introduction} 

Let $R$ be a Gorenstein ring, and let $I$ and $J$ be proper $R$-ideals. Then $I$ and $J$ are \textit{(CI-) directly linked}, written as $I\sim J$, if there is a regular sequence $\mathbf{f} = f_1,\dots, f_g$ for which $I=(\mathbf{f}):J$ and $J=(\mathbf{f}):I$. The \textit{linkage class} (or \text{liaison class}) of an ideal $J$ consists of all ideals $I$ for which there is a sequence of direct links
\[ I = I_0 \sim I_1 \sim \dots \sim I_n = J.\]
Ideals in the linkage class of a complete intersection are called \textit{licci} ideals, which are of special interest, in large part due to their nice homological properties (see \cite{B81}, \cite{H80}, \cite{HK82}, \cite{HU85}, \cite{HU89}, \cite{HU92}, \cite{J98}, \cite{J01}, \cite{U87}, \cite{U89} for a sample of results). 

\par Given a licci ideal, there are possibly many different sequences of links to a complete intersection. In \cite{HU88}, Huneke and Ulrich provide an optimal sequence when $R$ is local with an infinite residue field; namely, fixing the licci ideal, it is possible to construct at each step a regular sequence from a minimal generating set to create the next link. However, applying their result in the homogeneous setting causes the graded structure of the ideal to be lost. In fact, for a homogeneous licci ideal in a polynomial ring, it is not known if it is possible to use only homogeneous regular sequences to create links to a complete intersection. In the case where it is possible, we use the term \textit{homogeneously licci}. Several papers investigate the question of if there is a numerical optimal (i.e., optimal constraints on the degrees of the forms generating the regular sequences) way to link a homogeneously licci ideal to a complete intersection. 
\par  When $R$ is local, grade $2$ perfect ideals and grade $3$ Gorenstein ideals are licci (by \cite{G54} and \cite{W73}, respectively). The work of Gaeta in \cite{G54} shows that grade $2$ homogeneous perfect ideals are \textit{minimally licci}, which means it is possible to link them to a complete intersection by using regular sequences with forms of smallest possible degree at each step. In \cite{MN08}, Migliore and Nagel showed grade $3$ homogeneous Gorenstein ideals are minimally licci as well. However, in the the main result of \cite{HMNU07}, Huneke, Migliore, Nagel, and Ulrich construct a class of ideals that are homogeneously licci but not minimally licci. Subsequently, E. Chong introduced a weaker \textit{sequentially bounded licci (SBL)} property which, roughly speaking, relaxes but still maintains control of the degrees of the forms (see Defintion \ref{SBL}.ii. for precise statement). We show that all of the non-minimally licci ideals constructed in \cite{HMNU07}, and minimal links of these ideals, are SBL (proved in Propositions \ref{t2 l1} and \ref{Koszul in L}). Further, all zero-dimensional licci monomial ideals are SBL by a result of Huneke and Ulrich in \cite{HU07}. Thus, it is natural to ask if all homogeneously licci ideals are SBL. 

\par Chong asks a slightly more general question about the following characterizations of licci ideals:
\[ \fbox{ $\text{minimally licci} \Rightarrow \text{sequentially bounded licci} \Rightarrow \text{homogeneously licci} \Rightarrow \text{licci}.$ }\]

\begin{question}[\cite{Chong15}, Question 10.3.6] \label{SBL Question}
Knowing there are homogeneously licci ideals that are not minimally licci (by \cite{HMNU07}), which of the characterizations above are not equivalent?
\end{question}

The following illustrates why one is interested to know whether all homogeneously licci ideals are SBL: using the SBL defintion and a Hilbert function formula for a direct homogeneous link (see Propositon \ref{HF links}), Chong provided further evidence for the Eisenbud-Green-Harris conjecture. Before stating the conjecture and Chong's result, we recall the following definition: if $C=(f_1,\dots, f_n)$ is a complete intersection of grade $n$ with $\deg f_1 \leq \dots \leq \deg f_n$, then $\text{type}(C)=(\deg f_1,\dots, \deg f_n)$.


\begin{cnj}[EGH, \cite{EGH96}]
If a homogeneous ideal $I$ in $R=k[x_1,\dots, x_N]$ contains a complete intersection of type $(a_1,\dots, a_n)$, then there is a monomial ideal containing $(x_1^{a_1},\dots, x_n^{a_n})$ with the same Hilbert function as $I$. 
\end{cnj}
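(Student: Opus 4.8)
This is the Eisenbud--Green--Harris conjecture, which remains open in general; what follows is the line of attack one would take and the point at which it currently stalls.

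The plan is to reduce EGH to a purely combinatorial, Macaulay-type growth bound for the Hilbert functions of ideals containing the \emph{monomial} complete intersection $P=(x_1^{a_1},\dots,x_n^{a_n})$. First I would replace $I$ by a monomial ideal with the same Hilbert function that still contains $P$: passing to a generic initial ideal preserves the Hilbert function, and the remaining point is to arrange that the complete intersection $C\subseteq I$ of type $(a_1,\dots,a_n)$ degenerates to $P$. For this one flatly deforms $C$ to the monomial complete intersection $P$ and checks that $I$ specializes to an ideal $J\supseteq P$ with $H_J=H_I$; this deformation step is delicate, but it is part of the standard reduction. So it suffices to treat the case where $I=J$ is a monomial ideal with $J\supseteq P$.

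Given such a $J$ with Hilbert function $H$, the conjecturally extremal object is the \emph{lex-plus-powers} ideal $L=L(H,P)$: in each degree $d$ take the powers $x_i^{a_i}$ with $a_i\le d$ together with the first $\dim_k J_d-\dim_k P_d$ monomials, in the lexicographic order, among those not already divisible by some $x_i^{a_i}$. The heart of the argument is the \emph{lex-plus-powers inequality}: if $H$ is the Hilbert function of some ideal containing $P$, then $L(H,P)$ is itself an ideal (i.e., the construction is closed under multiplication by the variables) and $H_{L(H,P)}=H$. Equivalently, for every $J\supseteq P$ and every $d$,
\[
\dim_k J_{d+1}\;\ge\;\dim_k L(H_J,P)_{d+1},
\]
an LPP analogue of Macaulay's bound on the growth of Hilbert functions. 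Granting this, $L(H,P)$ is a monomial ideal containing $(x_1^{a_1},\dots,x_n^{a_n})$ with the same Hilbert function as $I$, which is exactly what EGH asserts.

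The main obstacle is precisely this last step: the LPP analogue of Macaulay's theorem is the substance of the conjecture and is not known in general. It holds in several special regimes --- Clements--Lindstr\"om's theorem settles the case $n=N$ (all variables used), and there are results for small $n$, for $a_1=\dots=a_n=2$, and for the $a_i$ large relative to the degrees in play --- and, via Chong's theorem together with the link formula of Proposition~\ref{HF links}, it holds for homogeneous ideals that are sequentially bounded licci, by propagating the Hilbert function along homogeneous links of controlled degree and comparing with the lex-plus-powers model. (One consequence of the present paper is that this last route does not reach every homogeneously licci ideal.) A complete proof would require a new compression, or combinatorial shifting, argument on monomial ideals modulo $P$ that is uniform in the exponents $a_i$; understanding how the lexicographic order interacts with the staircase of $P$ is exactly where all current techniques break down.
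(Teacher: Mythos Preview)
You have correctly recognized that this statement is the Eisenbud--Green--Harris conjecture, which the paper records as motivation but does \emph{not} attempt to prove; indeed it is labeled \texttt{cnj} and cited to \cite{EGH96}, and the surrounding text only uses it to explain why Chong's SBL condition (Theorem~\ref{Chong EGH}) is of interest. So there is no proof in the paper to compare against, and your response---outlining the standard lex-plus-powers reduction and identifying the LPP growth bound as the missing step---is the right thing to do here. Your summary of the known partial results (Clements--Lindstr\"om for $n=N$, the quadratic case, and Chong's SBL class) is accurate, and your parenthetical remark that the present paper shows the SBL route misses some homogeneously licci ideals is exactly the point of Theorem~\ref{thm:main}.
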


\begin{thm}[\cite{Chong16}] \label{Chong EGH}
Suppose $I$ is a grade $n$ homogeneous licci ideal of $R=k[x_1,\dots, x_N]$. Further, suppose $I_s$ is a complete intersection and that there is a sequence of homogeneous links
\[ I = I_0 \widesim{C_0} \dots \widesim{C_{s-1}} I_s,\]
such that (i.) $\text{type}(C_{j}) \geq \text{type}(C_{j+1})$ for $j=0,\dots, s-1$, and (ii.) $\text{type}(C_0)$ is minimal in $I$. Then $I$ satisfies the EGH Conjecture.
\end{thm}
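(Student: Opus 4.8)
The plan is to induct on the length $s$ of the given linkage chain, constructing in parallel a chain of links of \emph{monomial} ideals that begins at a monomial complete intersection. Hypothesis (i) is precisely what allows the monomial chain to be formed, and hypothesis (ii) enters only to convert the output into the statement of the EGH Conjecture.

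Write $(a_1,\dots,a_n)=\text{type}(C_0)$ and let $D_0=(x_1^{a_1},\dots,x_n^{a_n})$ be the monomial complete intersection of the same type. Since $\text{type}(C_0)$ is minimal in $I$, any complete intersection of type $(a_1',\dots,a_n')$ contained in $I$ has $a_i'\ge a_i$ for all $i$, so $(x_1^{a_1'},\dots,x_n^{a_n'})\subseteq D_0$; it is therefore enough to produce a single monomial ideal $M$ with $D_0\subseteq M$ and $H_{R/M}=H_{R/I}$. I will prove, by induction on $s$, the more uniform statement: \emph{for every homogeneous linkage chain $J=J_0\widesim{C_0}\cdots\widesim{C_{s-1}}J_s$ with $J_s$ a grade-$n$ complete intersection and $\text{type}(C_0)\ge\text{type}(C_1)\ge\cdots\ge\text{type}(C_{s-1})$, there is a monomial ideal $M$ with $R/M$ Cohen--Macaulay of codimension $n$ such that $D_0\subseteq M$ and $H_{R/M}=H_{R/J}$}, where $D_0$ is the monomial complete intersection of type $\text{type}(C_0)$ (of type $\text{type}(J_0)$ when $s=0$). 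The base case $s=0$ is immediate, since any two complete intersections of the same type have the same Hilbert function, so $M=D_0$ works.

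For the inductive step, apply the inductive hypothesis to $J_1$ with the truncated chain $J_1\widesim{C_1}\cdots\widesim{C_{s-1}}J_s$; this yields a monomial ideal $M_1$ with $R/M_1$ Cohen--Macaulay of codimension $n$, with $(x_1^{b_1},\dots,x_n^{b_n})\subseteq M_1$ for $(b_1,\dots,b_n)=\text{type}(C_1)$ (or $\text{type}(J_1)$ when $s=1$; here $\text{type}(C_0)\ge\text{type}(J_1)$ because $C_0\subseteq J_1$ and a grade-$n$ complete intersection contains no grade-$n$ complete intersection of componentwise smaller type), and with $H_{R/M_1}=H_{R/J_1}$. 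In either case $\text{type}(C_0)\ge(b_1,\dots,b_n)$, so $x_i^{a_i}\in(x_i^{b_i})\subseteq M_1$ and hence $D_0\subseteq M_1$; put $M:=D_0:M_1$. As a colon of monomial ideals, $M$ is monomial, and $D_0\subseteq M$; and because $R/M_1$ is Cohen--Macaulay of codimension $n=\text{grade}\,D_0$ with $D_0\subseteq M_1$, the ideals $M_1$ and $M$ are genuinely linked by $D_0$ (so $D_0:M=M_1$) and $R/M$ is again Cohen--Macaulay of codimension $n$. Finally, by the Hilbert-function formula for a homogeneous link (Proposition~\ref{HF links}), $H_{R/M}$ is determined by $H_{R/M_1}$ and $\text{type}(D_0)$; applying the same formula to the link $J_0=C_0:J_1$, and using $\text{type}(D_0)=\text{type}(C_0)$ together with $H_{R/M_1}=H_{R/J_1}$, gives $H_{R/M}=H_{R/J_0}=H_{R/J}$. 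Taking $J=I$ finishes the proof.

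The step I expect to require the most care is the interface with Proposition~\ref{HF links} away from the Artinian case: that formula is applied to $R/M_1$, so one needs $R/M_1$ to be unmixed (Cohen--Macaulay) of codimension $n$ — both for the colon $D_0:M_1$ to be an honest link and for the Hilbert-series duality underlying the formula to be valid. This is why the Cohen--Macaulay property is carried through the induction, and one must verify that a monomial link of a monomial ideal with Cohen--Macaulay quotient again has both properties: monomiality from the combinatorial description of colons of monomial ideals, and Cohen--Macaulayness because linkage preserves it (Peskine--Szpiro).
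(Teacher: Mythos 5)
The paper does not prove this theorem---it is quoted verbatim from \cite{Chong16}, and the text only gestures at the mechanism (``using the SBL definition and a Hilbert function formula for a direct homogeneous link''). So there is no internal proof to compare against. Your argument---descend the linkage chain, and in parallel build a chain of \emph{monomial} links starting from a monomial complete intersection, using the Hilbert-function formula for a link at each step---is the natural one and is, as far as I can tell, essentially the argument underlying Chong's theorem. The observation that minimality of $\mathrm{type}(C_0)$ in $I$ lets a single $M$ serve for every complete intersection contained in $I$ (because $(x_1^{a_1'},\dots,x_n^{a_n'})\subseteq D_0\subseteq M$) is exactly the right way to use hypothesis (ii).

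There is, however, a real gap at the point you yourself flag. Proposition~\ref{HF links} as recorded in this paper is stated only for $\mathfrak{m}$-primary ideals with a complete intersection of type $(a_1,\dots,a_N)$ in all $N$ variables; the identity $\mathrm{HF}_{R/(C:I)}(a-i)=\mathrm{HF}_{R/C}(i)-\mathrm{HF}_{R/I}(i)$ is \emph{false} as written when $n<N$, since the left side vanishes for $i\gg 0$ while the right side grows like a polynomial of degree $N-n-1$. To push your induction through in codimension $n<N$ you need the standard Cohen--Macaulay version, phrased in terms of $h$-vectors: if $R/I$ is CM of codimension $n$ and $C\subseteq I$ is a complete intersection of type $(a_1,\dots,a_n)$, then $h_{R/(C:I)}(\sigma-i)=h_{R/C}(i)-h_{R/I}(i)$ with $\sigma=\sum a_j-n$, and two CM quotients of the same Krull dimension with equal $h$-vectors have equal Hilbert functions. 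This is a known consequence of $\omega_{R/I}\cong (C:I)/C$ (up to shift), but it is not what Proposition~\ref{HF links} says; you should either cite the CM version or pass to Artinian reductions (noting that you only need equality of $h$-vectors, not that the reduction itself be monomial).

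Two smaller points. First, the assertion that a grade-$n$ complete intersection $E=(g_1,\dots,g_n)$ of type $(b_1,\dots,b_n)$ contains no grade-$n$ complete intersection of componentwise smaller type, while true, is not a consequence of Lemma~\ref{max socle shift} (which controls only the sum); it needs the short degree argument that homogeneous elements of $E$ of degree $<b_k$ lie in $(g_1,\dots,g_{k-1})$, a grade-$(k-1)$ ideal, so no regular sequence of length $k$ in $E$ can have its $k$-th degree below $b_k$. Second, when you set $M:=D_0:M_1$ you should rule out $M_1=D_0$, which would give $M=R$; this follows because $H_{R/M_1}=H_{R/J_1}$ and $C_0\subseteq J_1$, so $M_1=D_0$ would force $C_0=J_1$ and hence $J_0=C_0:J_1=R$, contradicting that $J_0$ is proper. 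With these three points supplied, the proof is complete.
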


Condition (i.) means that if $\text{type}(C_j) = (a_1,\dots, a_n)$ and $\text{type}(C_{j+1}) = (b_1,\dots, b_n)$, then $a_i\geq b_i$ for $i=1,\dots,n$. This non-increasing sequence of types is what Chong defines to be the \textit{sequentially bounded} property. The minimality in condition (ii.) is well-defined by this order on the types. Even when (ii.) does not hold, a similar idea shows that SBL ideals provide evidence for a positive answer to the following question asked by Huneke and Ulrich.

\begin{question}[\cite{HU07}] \label{Monomial HF Q}
If $I$ is a zero-dimensional homogeneously licci ideal, is there a licci monomial ideal with the same Hilbert Function as $I$?
\end{question}

\par Our main result is twofold: (1) we construct a class of ideals that are homogeneously licci but not sequentially bounded licci, further answering Question \ref{SBL Question}; and (2) restricting to three variables, for each ideal in this class we construct a zero-dimensional licci monomial ideal with the same Hilbert function, providing further evidence for a positive answer to Question \ref{Monomial HF Q}. A plentiful source of examples for (1) come from direct links of certain non-minimally licci ideals of \cite{HMNU07}, constructed with a carefully chosen regular sequence (see Section 3.1).
\par Proving directly that an ideal does not have the SBL property is difficult, since there are infinitely many complete intersection types in an ideal. Even if two complete intersections have the same type, the relations between the generators may provide different properties of the link. Further, after choosing the complete intersection to define the first link, another complication arises when repeating complete intersection types. Indeed, using the same complete intersection type for any number of links does not violate the sequentially bounded condition.  
\par We overcome these difficulties by a deep study of certain Betti tables and the Betti tables induced by homogeneous links. In particular, we find a class of ideals whose graded Betti numbers, and the graded Betti numbers of direct links, help determine the minimal complete intersection type of the ideal. 
\par The following theorems summarize the results of this paper, the first being our main contribution to the study of homogeneous linkage.

\newtheorem*{thm:main}{Theorem \ref{thm:main} (Main Theorem)} 
\begin{thm:main} 
Fix $n\geq 4$. For a polynomial ring $R=k[x_1,\dots, x_N]$, with $k$ a field and $N\geq 3$, assume $J$ is a grade $3$ homogeneous $R$-ideal with a minimal graded free resolution 
\[ 0 \to \begin{matrix} R(-(2n+4)) \\ \oplus \\ R(-(2n+7)) \\ \oplus \\ R(-(3n+5)) \end{matrix} \to \begin{matrix} R(-(n+2))^2 \\ \oplus \\ R(-(n+4)) \\ \oplus \\ R(-(2n+3)) \\ \oplus \\ R(-(2n+6))^2 \\ \oplus \\ R(-(3n+4)) \end{matrix} \to \begin{matrix} R(-2) \\ \oplus \\ R(-(n+1))^2 \\ \oplus \\ R(-(n+3)) \\ \oplus \\ R(-(2n+4)) \end{matrix} \to J \to 0.\]
Then $J$ is homogeneously licci, but not sequentially bounded licci. 
\end{thm:main}

\newtheorem*{thm:main2}{Theorem \ref{thm:main2}}
\begin{thm:main2}
When $N=3$, for each ideal satisfying the conditions of Theorem \ref{thm:main}, there is a zero-dimensional monomial licci ideal with the same Hilbert function.
\end{thm:main2}

\par The latter theorem shows that while the SBL property may be detected by the Betti table, it cannot be detected by the Hilbert function. Our proof of this part uses an interesting connection between Macaulay's inverse system and linkage. 

\par We remark here that in the geometric context (Gorenstein liaison) there is a property similar to the sequentially bounded property called "descending biliaison," which was studied as an attempt to classify the ideals in the Gorenstein linkage class of a complete intersection (see \cite{H02} and \cite{HSS06}). 
\par The outline of this paper is as follows. In Section 2 we provide background material, focusing on Ferrand's mapping cone resolution for direct and double homogeneous links in grade 3. Section 3 recalls the non-minimally licci ideals from \cite{HMNU07} and introduces several new classes of licci ideals, one of which is our class of non-SBL ideals. Section 4 contains the final propositions needed for the proof of Theorem \ref{thm:main}, all of which study different cases of double links of our proposed non-SBL ideals. In Section 5, we briefly recall Macaulay's inverse systems and prove Theorem \ref{thm:main2}. Lastly, there are two appendices: Appendix A gathers many remarks and lemmas that are used often to prove the statements in Sections 3 and 4, and Appendix B contains technical arguments about when ghost terms of double links trim. 
\par For further background in linkage, see \cite{HU87}.
\par \textit{Acknowledgement}. The author is grateful to his Ph.D. advisor, Paolo Mantero, for his many helpful conversations. Without him this paper would not be possible.


\section{Ferrand's Mapping Cone and Minimal Koszul Relations}

In this section we review background material in homogeneous linkage, with a focus on Ferrand's mapping cone construction. In particular, we give an explicit description for when there are trims in the induced resolution of a direct link, which reveals the significance of minimal Koszul relations in determining the licci property for grade $3$ ideals. Unless otherwise stated, the ring $R$ is the polynomial ring $R=k[x_1,\dots, x_N]$ over a field $k$ and $\mf{m}$ is the homogeneous maximal ideal. Further, we assume $R$ has the standard grading.

\begin{defn}
Assume $C=(f_1, \dots, f_n)\subseteq R$ is a grade $n$ homogeneous complete intersection with $\deg f_1\leq \dots \leq \deg f_n$. 
\begin{enumerate}[i.]
\item The \textit{type} of $C$ is $\text{type}(C) = (\deg f_1,\dots, \deg f_n)$. 
\item If $\text{type}(D)=(b_1, \dots, b_n)$, then $\text{type}(C)\leq \text{type}(D)$ if and only if $\deg f_i \leq b_i$ for all $i=1,\dots, n$. 
\item For a grade $n$ ideal $I$, this order yields a unique minimal element in the set $\{ \text{type}(D): D\subseteq I \text{ is a complete intersection of grade } n\}$, which we call the \textit{minimal type} of $I$. 
\end{enumerate}
\end{defn}

\begin{defn} \label{grade jump def}
Let $I$ be a grade $n$ perfect $R$-ideal with $C\subseteq I$ a complete intersection of minimal type $(a_1,\dots, a_n)$.
\begin{enumerate}[i.]
\item The number $g_j(I) := a_j$ is called the \textit{grade} $j$ \textit{jump} of $I$, for $j=1,\dots, n$.
\item The direct link $C:I$ is called a \textit{minimal link} of $I$.
\end{enumerate}
\end{defn}

Using the definition in the introduction, it may not be clear how to determine when two ideals are directly linked. However, the following proposition demonstrates the typical method used to construct links. 

\begin{prop}[\cite{PS74}]
Let $R$ be a local Gorenstein ring, and let $I$ be a proper, unmixed $R$-ideal. If $C$ is a complete intersection with $\grade(C)=\grade(I)$, then $I$ and $C:I$ are directly linked by $C$. 
\end{prop}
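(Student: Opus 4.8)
The plan is to descend everything modulo $C$ and reduce to a statement about ideals in a zero-dimensional Gorenstein ring, where ``double annihilator equals identity'' is classical, and then to globalize using unmixedness. Assume as usual that $C\subseteq I$ (the assertion is vacuous otherwise), write $g=\grade(C)=\grade(I)$, let $f_1,\dots,f_g$ be the regular sequence generating $C$, and pass to $S=R/C$. Since $R$ is Gorenstein and $C$ is generated by a regular sequence, $S$ is Gorenstein of dimension $\dim R-g$. Put $\bar I=I/C$ and $J=C:I$. Colon ideals descend: $J/C=\operatorname{Ann}_S(\bar I)$ and $(C:J)/C=\operatorname{Ann}_S(\operatorname{Ann}_S(\bar I))$. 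Since $I\subseteq C:J$ always, the entire content is the reverse inclusion $\operatorname{Ann}_S(\operatorname{Ann}_S(\bar I))\subseteq\bar I$.

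First I would record two consequences of the hypotheses. Because $f_1,\dots,f_g$ is a maximal $R$-regular sequence contained in $I$ (as $\grade(I)=g$), the image $\bar I$ contains no nonzerodivisor of $S$, so $\grade_S(\bar I)=0$, and since $S$ is Cohen--Macaulay, $\operatorname{height}_S(\bar I)=0$. Next, unmixedness of $I$ means every $\mathfrak{p}\in\operatorname{Ass}_R(R/I)$ has height $g$; localizing $R$ at $\mathfrak{p}$ shows $f_1,\dots,f_g$ is an $R_{\mathfrak{p}}$-regular sequence, so heights drop by exactly $g$ modulo $C$, whence these primes become the height-zero --- hence minimal --- primes of $S$. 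Therefore $\operatorname{Ass}_S(S/\bar I)\subseteq\operatorname{Min}(S)$: the ideal $\bar I$ is unmixed of height zero in $S$.

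Next I would verify the inclusion at every minimal prime $\mathfrak{p}$ of $S$. There $S_{\mathfrak{p}}$ is an Artinian Gorenstein local ring, in which the map $L\mapsto(0:L)$ is an order-reversing involution on ideals, so $(0:(0:L))=L$ (an instance of Matlis duality: $S_{\mathfrak{p}}$ is self-injective, $\operatorname{Hom}_{S_{\mathfrak{p}}}(-,S_{\mathfrak{p}})$ is exact and involutive on finite-length modules). Since annihilators commute with localization for finitely generated modules, $\big(\operatorname{Ann}_S(\operatorname{Ann}_S\bar I)\big)_{\mathfrak{p}}=\operatorname{Ann}_{S_{\mathfrak{p}}}\big(\operatorname{Ann}_{S_{\mathfrak{p}}}(\bar I_{\mathfrak{p}})\big)=\bar I_{\mathfrak{p}}$. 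Thus the finitely generated $S$-module $M:=\operatorname{Ann}_S(\operatorname{Ann}_S\bar I)/\bar I\subseteq S/\bar I$ vanishes after localizing at each minimal prime of $S$; but $\operatorname{Ass}_S(M)\subseteq\operatorname{Ass}_S(S/\bar I)\subseteq\operatorname{Min}(S)$, so $M$ has no associated primes and is therefore $0$. Hence $\operatorname{Ann}_S(\operatorname{Ann}_S\bar I)=\bar I$, i.e.\ $C:(C:I)=I$; combined with the tautology $C:I=C:I$, this exhibits $I$ and $C:I$ as directly linked by $C$ (both proper, once the trivial case $I=C$, where $C:I=R$, is discarded).

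The main obstacle --- and the only essential use of unmixedness --- is this globalization step. A priori $\operatorname{Ann}_S(\operatorname{Ann}_S\bar I)$ can differ from $\bar I$ at embedded primes of $S/\bar I$; indeed, for a non-unmixed $I$ the double link $C:(C:I)$ is genuinely larger than $I$ (it is the unmixed part of $I$ relative to $C$). So the crux is recognizing that unmixedness is precisely what forces $\operatorname{Ass}_S(S/\bar I)\subseteq\operatorname{Min}(S)$, after which the zero-dimensional Gorenstein duality handles everything pointwise. The remaining ingredients --- the behavior of grade, height, colon ideals, and associated primes modulo a complete intersection --- are routine.
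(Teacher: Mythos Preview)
The paper does not prove this proposition; it merely cites it from Peskine--Szpiro \cite{PS74} as background. Your argument is correct and is essentially the standard one: pass to $S=R/C$, observe that $\bar I$ is unmixed of height zero there, invoke the double-annihilator identity $(0:(0:L))=L$ in the Artinian Gorenstein localizations $S_{\mathfrak p}$ at minimal primes, and then use $\operatorname{Ass}_S(S/\bar I)\subseteq\operatorname{Min}(S)$ to conclude that the cokernel $\operatorname{Ann}_S(\operatorname{Ann}_S\bar I)/\bar I$ has no associated primes and hence vanishes.

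One small quibble: saying the assertion is ``vacuous'' when $C\not\subseteq I$ is not quite right --- it would be \emph{false}, since $C\subseteq C:(C:I)$ always. The hypothesis $C\subseteq I$ is implicit in the paper's use of the proposition (it is introduced as ``the typical method used to construct links''), so assuming it is fine, but the justification should be that the statement is understood to carry this hypothesis rather than that it becomes trivially true.
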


\begin{defn}
Let $I$ be a grade $n$ homogeneous perfect $R$-ideal with a minimal graded free resolution
\begin{equation} \label{gen res}
\mc{F}_{\bigcdot}: 0 \to \oplus_j R(-j)^{\beta_{n,j}} \to \oplus_j R(-j)^{\beta_{n-1,j}} \to \dots \to \oplus_j R(-j)^{\beta_{1,j}} \to R \to 0. 
\end{equation}
\begin{enumerate}[i.]
    \item The graded Betti numbers of $R/I$ are the numbers $\beta_{i,j}$, denoted $\beta_{i,j}(R/I)$.
    \item The Betti table of $R/I$ is the table whose columns, $i$, correspond to the homological degrees, and whose rows, $j$, correspond to the degrees of the generators of the syzygy modules. The entry in position $i,j$ is $\beta_{i,i+j}(R/I)$.
    \item For a fixed $i$, the total Betti number $\beta_i(R/I)$ is the rank of $F_i$, the free module in the $i$th step of $\mc{F}_{\bigcdot}$.
    \item The \textit{deviation} of $I$ is the value $d(I):= \beta_1(R/I)-n$.
    \item The \textit{CM type} of $I$ is the value $r(R/I):=\beta_n(R/I)$.
\end{enumerate}
\end{defn}

\begin{thm}[Ferrand's Mapping Cone, \cite{PS74}] \label{Ferrand}
Let $I$ be a grade $n$ homogeneous perfect $R$-ideal with a minimal graded free resolution as (\ref{gen res}). Suppose $C\subsetneq I$ is a complete intersection of type $(a_1,\dots, a_n)$, set $a=a_1+\dots +a_n$, and let $\mc{K}_{\bigcdot}$ be the Koszul resolution of $C$. Further, let $u_{\bigcdot}:\mc{K}_{\bigcdot} \to \mc{F}_{\bigcdot}$ be a map of resolutions with $u_0 = id_R$. If $\mc{G}_{\bigcdot}(u_{\bigcdot})$ is the mapping cone complex induced by $u_{\bigcdot}$, then the reduced shifted dual complex $\mc{G}_{\bigcdot}(u_{\bigcdot})^*[-n-1]$, which can be written as
\[ 0 \to \oplus_{j} R(-(a-j))^{\beta_{1,j}} \to \begin{matrix} \oplus_{i=1}^{n} R(-(a-a_i)) \\ \oplus \\ \oplus_j R(-(a-j))^{\beta_{2,j}} \end{matrix} \dots \to \begin{matrix} \oplus_{i=1}^{n} R(-a_i) \\ \oplus \\ \oplus_j R(-(a-j))^{\beta_{n,j}} \end{matrix} \to R \to 0, \]
is a graded free resolution of $R/(C:I)$.
\end{thm}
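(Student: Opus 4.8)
The plan is to dualize the whole configuration, recognize the displayed complex as a mapping cone built on the dual side, and then identify its homology with $R/(C:I)$ by the classical duality of linkage. To begin, I would use the comparison theorem to lift the canonical surjection $R/C\twoheadrightarrow R/I$ (which exists because $C\subseteq I$) to a map of complexes $u_\bullet\colon\mc{K}_\bullet\to\mc{F}_\bullet$ with $u_0=\mathrm{id}_R$; this is the $u_\bullet$ of the statement, and it is the object one should dualize.

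Applying $\Hom_R(-,R)$, write $\mc{F}_\bullet^{\ast}=\Hom_R(\mc{F}_\bullet,R)$ and $\mc{K}_\bullet^{\ast}=\Hom_R(\mc{K}_\bullet,R)$. Because $R/I$ is perfect of grade $n$, $\mathrm{Ext}^i_R(R/I,R)=0$ for $i\neq n$, so the cohomology of $\mc{F}_\bullet^{\ast}$ is concentrated in degree $n$, equal to $\omega_I:=\mathrm{Ext}^n_R(R/I,R)$; reindexed, $\mc{F}_\bullet^{\ast}$ is a (generally non-minimal) graded free resolution of $\omega_I$. Likewise $C$ is a grade-$n$ complete intersection, so $\mathrm{Ext}^i_R(R/C,R)=0$ for $i\neq n$, and evaluating $\mathrm{coker}(\mc{K}_{n-1}^{\ast}\to\mc{K}_n^{\ast})$ directly from the Koszul differential gives $\mathrm{Ext}^n_R(R/C,R)\cong(R/C)(a)$; thus $\mc{K}_\bullet^{\ast}$ resolves $(R/C)(a)$. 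The dual $u_\bullet^{\ast}\colon\mc{F}_\bullet^{\ast}\to\mc{K}_\bullet^{\ast}$ is then a map of graded free resolutions lifting the induced homomorphism $\omega_I\to(R/C)(a)$.

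The crux --- and the step I expect to be the main obstacle --- is to identify this homomorphism with the natural inclusion $\big((C:I)/C\big)(a)\hookrightarrow(R/C)(a)$. For this I would invoke the change-of-rings spectral sequence $\mathrm{Ext}^p_{R/C}\!\big(M,\mathrm{Ext}^q_R(R/C,R)\big)\Rightarrow\mathrm{Ext}^{p+q}_R(M,R)$: since $\mathrm{Ext}^q_R(R/C,R)=0$ for $q\neq n$ it has a single nonzero row and collapses, yielding a functorial isomorphism $\mathrm{Ext}^n_R(M,R)\cong\Hom_{R/C}\!\big(M,(R/C)(a)\big)$ for every $R/C$-module $M$. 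Taking $M=R/I$ gives $\omega_I\cong\Hom_{R/C}(R/I,R/C)(a)=\big((C:I)/C\big)(a)$, and functoriality applied to $R/C\to R/I$ identifies $\omega_I\to(R/C)(a)$ with the stated inclusion. (Alternatively one simply cites the Peskine--Szpiro duality.) In particular this map is injective with cokernel $(R/(C:I))(a)$, so from the long exact homology sequence of $0\to\mc{K}_\bullet^{\ast}\to\mathrm{Cone}(u_\bullet^{\ast})\to\mc{F}_\bullet^{\ast}[-1]\to 0$ --- the connecting map being exactly that injection --- $\mathrm{Cone}(u_\bullet^{\ast})$ has homology $(R/(C:I))(a)$ in degree $0$ and vanishing homology in positive degrees. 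Twisting by $R(-a)$, it is a graded free resolution of $R/(C:I)$.

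It remains to see that this mapping cone is the displayed complex. Dualizing a mapping cone produces a mapping cone, so $\Hom_R(\mc{G}_\bullet(u_\bullet),R)$ agrees with $\mathrm{Cone}(u_\bullet^{\ast})$ up to the reindexing shift $[-n-1]$; the rest is bookkeeping of graded twists. After the global twist by $R(-a)$, in homological degree $p$ the Koszul piece of the cone is again $\mc{K}_p$, so its free summands run through $\bigoplus_i R(-a_i),\dots,\bigoplus_i R(-(a-a_i))$ as $p=1,\dots,n-1$ together with a copy of $R(-a)$ at $p=n$, while the term $\mc{F}_k=\bigoplus_j R(-j)^{\beta_{k,j}}$ contributes $\bigoplus_j R(-(a-j))^{\beta_{k,j}}$ in homological degree $n+1-k$. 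The one redundancy is that copy of $R(-a)$ at $p=n$ together with the lowest term, a copy of $R(-a)$ in homological degree $n+1$ coming from $\mc{F}_0$, paired by the identity summand $u_0^{\ast}=\mathrm{id}$ of the cone differential; cancelling it is precisely the passage to the ``reduced'' complex, which shortens the resolution from length $n+1$ to $n$ and turns its top term into $\bigoplus_j R(-(a-j))^{\beta_{1,j}}$, matching the statement. One should note that minimality is not, and cannot be, asserted: the cone is typically non-minimal, and determining which further summands cancel (the ``trims'') is exactly the business of the later sections.
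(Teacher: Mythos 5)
The paper does not prove Theorem~\ref{Ferrand} at all --- it is stated as a recalled result with a citation to Peskine--Szpiro \cite{PS74}, and what follows in the text is only an explicit unwinding of the differentials in the grade~$3$ case. Your argument is therefore not comparable to a proof in the paper, but it is the standard derivation and it is correct: lift $R/C\twoheadrightarrow R/I$, dualize, use perfection to see that $\mc{F}_\bullet^*$ and $\mc{K}_\bullet^*$ resolve $\mathrm{Ext}^n_R(R/I,R)$ and $(R/C)(a)$ respectively, identify the induced map of homologies with the inclusion $\bigl((C\colon I)/C\bigr)(a)\hookrightarrow (R/C)(a)$ via the collapsing change-of-rings spectral sequence, and read off the homology of the cone from the long exact sequence; the bookkeeping and the cancellation of the two $R(-a)$ summands accounting for the word ``reduced'' both check out. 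One small inaccuracy worth noting: since $\mc{F}_\bullet$ is assumed minimal, its dual $\mc{F}_\bullet^*$ is automatically minimal (dualizing does not introduce unit entries), so the parenthetical ``generally non-minimal'' applies to the mapping cone but not to $\mc{F}_\bullet^*$ itself --- this does not affect the proof, since you only use that $\mc{F}_\bullet^*$ is a free resolution.
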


\begin{rem}
The resolution of $R/(C:I)$ is \textit{reduced}, meaning that since the isomorphism $u_0=id_R:R\to R$ does not contribute to the minimal graded free resolution, it was \textit{trimmed} off. Notice also that this resolution of $R/(C:I)$ may not be minimal. 
\end{rem}


We now record the details of this construction for our setting, but the general construction can be found in \cite{W94}. Let $I$ be a grade $3$ perfect $R$-ideal containing a grade $3$ complete intersection $C=(f_1,f_2,f_3)$. Assume $I\neq C$, and set $J=C:I$. For an $R$-module $M$, set $M^*=\Hom_R(M,R)$. Now let $(\mc{F}_{\bigcdot},d_{\bigcdot})$ be the minimal free resolution of $I$, and let $(\mc{K}_{\bigcdot},\delta_{\bigcdot})$ be the Koszul resolution of $C$, with differential maps
\[ \delta_1=[ f_1 \hspace{3pt} f_2 \hspace{3pt} f_3 ], \hspace{7pt} \delta_2 = \left[ \begin{matrix} -f_2 & f_3 & 0 \\ f_1 & 0 & -f_3 \\ 0 & -f_1 & f_2 \end{matrix} \right], \hspace{3pt} \text{and} \hspace{7pt} \delta_3 = \left[ \begin{matrix} f_3 \\ f_2 \\ f_1 \end{matrix} \right] .\]

\noindent Define $u_{\bigcdot}: (\mc{F}_{\bigcdot},d_{\bigcdot}) \to (\mc{K}_{\bigcdot}, \delta_{\bigcdot})$ to be the map of resolutions extending the inclusion $C\subseteq I$, and let $(\mc{G}^*_{\bigcdot},\partial_{\bigcdot})$ be the dual mapping cone of $u_{\bigcdot}$. Then $\mc{G}^*_{\bigcdot}$ is a resolution of $J$, and can be written as 
\[ 0\to F_1^* \xrightarrow{\partial_3} \begin{matrix} K_1^* \\ \oplus \\ F_2^* \end{matrix} \xrightarrow{\partial_2} \begin{matrix} K_2^* \\ \oplus \\ F_3^* \end{matrix} \xrightarrow{\partial_1} R \to R/J \to 0, \]
where
\[ \partial_1 = [ -\delta_3^{*} \hspace{5pt} u_3^{*} ], \hspace{7pt} \partial_2 = \left[ \begin{matrix} -\delta_2^{*} & u_2^* \\ 0 & d_3^* \end{matrix} \right], \hspace{3pt} \text{and} \hspace{7pt} \partial_3 = \left[ \begin{matrix} u_1^* \\ d_2^* \end{matrix} \right]. \]
Notice that the first three columns of $\partial_2$ represent Koszul relations. 
\par Since $\mc{F}_{\bigcdot}$ and $\mc{K}_{\bigcdot}$ are minimal, there are no unit entries in $d_i$ or $\delta_i$ for any $i$. Thus all unit entries of $\partial_i$ come from unit entries in the connecting maps $u_i$. We examine what these unit entries say about the structure of either the original ideal $I$ or the complete intersection $C$. First, a defintion.

\begin{defn}
Let $I$ be an $R$-ideal containing distinct elements $f,g_1,\dots, g_t$. We say that $f$ and $g_1,\dots, g_t$ are \textit{weak associates in} $I$ if there exists elements $u_1,\dots, u_t\in R \bs \mf{m}$ and $h\in \mf{m}I$ for which $f=u_1g_1+\dots +u_tg_t +h$. When $I$ is understood, we simply say that $f$ and $g_1,\dots, g_t$ are \textit{weak associates}.
\end{defn}

\noindent \textit{Unit entries in} $u_1$: Fix a minimal generating set of $I$ and let $\{e_1,e_2,e_3\}$ be a basis for $K_1$. If $u_1(e_i) \in F_1$ has a unit entry, then $f_i$ and some minimal generators of $I$ are weak associates. Thus unit entries of $u_1$ correspond to using part of a minimal generating set of $I$ for generators of $C$.

\noindent \textit{Unit entries in} $u_2$: Let $\{e_1\wedge e_2, e_1\wedge e_3, e_2\wedge e_3\}$ be the ordered basis of $K_2$ that agrees with the differentials above. The mapping cone figure illustrates that there is a unit entry in $u_2$ if and only if $f_i\in \mf{m}J$ for some $i$ (since $u_3$ cannot have unit entries). We may assume $i=1$. This together with the definition of $\partial_2$ shows that $f_1\in \mf{m}J$ if and only if $u_2(e_2\wedge e_3)$ has a unit entry, which is equivalent to $u_2(e_2\wedge e_3) \in \text{Im}(d_2) / \mf{m} \text{Im}(d_2)$. By commutativity of $u_{\bigcdot}$, this is equivalent to the Koszul relation between $f_2$ and $f_3$ representing a minimal first syzygy of $I$. Notice that in this case, $\{f_2,f_3\}$ must be part of a minimal generating set of $I$. 

\noindent \textit{Unit entries in} $u_3$: If $u_3$ had a unit entry, then $J=R$, which would imply $C=I$. However, we started with the assumption that $C\neq I$, so there cannot be unit entries in $u_3$.


\subsection{Minimal Koszul Relations}
For the remainder of this section, we discuss in further detail the case where there are unit entries in $u_2$ from the set-up above; that is, the case where there are \textit{minimal Koszul relations}.

\begin{defn} \label{syzygy defs}
Let $I$ be a perfect homogeneous $R$-ideal with a minimal graded free resolution $(\mc{F}_{\bigcdot},d_{\bigcdot})$.
\begin{enumerate}[i.]
\item Let $\text{Syz}_1(R/I)=\ker d_1$ denote the module of first syzygies of $R/I$. If $\{e_1,\dots, e_t\}$ is a basis of $F_1$ with $d_1(e_i)=f_i$, and $s =g_1e_1+\dots +g_te_t$ is a homogeneous element of $\text{Syz}_1(R/I)$, then the degree of $s$ is $\deg s = \deg g_i + \deg f_i$, for any nonzero $g_i$.
\item The $R$-module $\text{Syz}_1(R/I)_{\leq d}$ consists of all first syzygies of degree $d$ or less. 
\item If $f,g\in I$ and the Koszul relation between $f$ and $g$ is a minimal syzygy of $I$, then we say that $f$ and $g$ yield a \textit{minimal Koszul relation} in $I$.
\item Set $C=\text{Im}(d_2)$ and let $K$ be the submodule of $C$ generated by the Koszul relations on the entries of $d_1$. The number of distinct minimal Koszul relations in $I$ is the value $$\lambda(I) := \dim_k (K+\mf{m}C)/\mf{m} C.$$
\end{enumerate}
\end{defn}

The value $\lambda(-)$ was used by Kustin and Miller in \cite{KM81} and \cite{KM82} to distinguish classes of grade $4$ Gorenstein ideals, and was used by Anne Brown in \cite{B87} to classify all grade 3, CM type 2 ideals with $\lambda = 1$ in Noetherian local rings. 
\par The work above proves the following proposition.

\begin{prop} \label{min Koszul relation}
Suppose $I$ is a grade $3$ perfect $R$-ideal containing a grade $3$ complete intersection $C$ with $C\neq I$. Let $(\mc{G}^*_{\bigcdot}, \partial_{\bigcdot})$ be Ferrand's reduced mapping cone resolution of $J=C:I$. Then 
\begin{enumerate}[i.]
\item There is a unit entry in $\partial_3$ if and only if a subset of a minimal generating set of $C$ is also part of a minimal generating set of $I$. In this case, the number of rows with a unit entry is the size of the largest such subset of $C$; and
\item There is a unit entry in $\partial_2$ if and only if some pair of minimal generators of $C$ yield a minimal Koszul relation in $I$. In this case, the number of rows with a unit entry is the number of such pairs of generators for a fixed generating set of $C$.
\end{enumerate}
\end{prop}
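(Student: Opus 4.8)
The preceding discussion already contains the essential content, so the plan is to organize it into a clean argument while keeping track of the number of rows involved. The first point I would make is that every unit entry of $\partial_2$ and of $\partial_3$ is forced into a comparison-map block: since $\partial_3 = \begin{bmatrix} u_1^{*} \\ d_2^{*} \end{bmatrix}$, $\partial_2 = \begin{bmatrix} -\delta_2^{*} & u_2^{*} \\ 0 & d_3^{*} \end{bmatrix}$, and $\mc{F}_{\bullet}$ is a minimal free resolution while $\mc{K}_{\bullet}$ is the Koszul complex of the proper ideal $C \subseteq \mf{m}$, every entry of $d_2$, $d_3$, and $\delta_2$ lies in $\mf{m}$, and this survives dualizing; so a unit entry of $\partial_3$ can only occur in the block $u_1^{*}$ and a unit entry of $\partial_2$ only in $u_2^{*}$. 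After dualizing, the rows of these two blocks are indexed respectively by the chosen minimal generators $f_1,f_2,f_3$ of $C$ and by the three pairs $\{f_i,f_j\}$, so both parts reduce to reading off the unit entries of $u_1$ and of $u_2$.

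For part (i), I would use that $d_1$ induces an isomorphism $F_1/\mf{m}F_1 \xrightarrow{\sim} I/\mf{m}I$ and that the chain-map identity $d_1u_1 = u_0\delta_1 = \delta_1$ gives $d_1(u_1(e_i)) = f_i$; hence the $i$-th row of $u_1^{*}$ inside $\partial_3$ carries a unit entry (that is, $u_1(e_i)\notin\mf{m}F_1$) if and only if $f_i\notin\mf{m}I$. Since replacing $f_1,f_2,f_3$ by any other minimal generating set of $C$ only acts by $GL_3(R)$, the intrinsic object to track is the $k$-linear map $\bar{\phi}\colon C/\mf{m}C \to I/\mf{m}I$ induced by $C\subseteq I$ (well defined because $\mf{m}C\subseteq\mf{m}I$). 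By Nakayama's lemma, a subset of a minimal generating set of $C$ is part of a minimal generating set of $I$ exactly when its image in $I/\mf{m}I$ is linearly independent, so the largest such subset has size $\dim_k\operatorname{Im}(\bar{\phi})$; and by choosing the minimal generators of $C$ so that the first $\dim_k\operatorname{Im}(\bar{\phi})$ of them map to a basis of $\operatorname{Im}(\bar{\phi})$ and the rest map to $0$, the block $u_1^{*}$ then has exactly that many rows with a unit entry. This yields both the equivalence and the count in (i).

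For part (ii), I would fix the ordered basis $e_1\wedge e_2,\ e_1\wedge e_3,\ e_2\wedge e_3$ of $K_2$ compatible with the displayed $\delta_2$. As recalled above, $u_2(e_i\wedge e_j)$ has a unit entry if and only if its class in $\operatorname{Im}(d_2)/\mf{m}\operatorname{Im}(d_2)$ is nonzero, and by commutativity of $u_{\bullet}$ in homological degree two this is exactly the condition that the Koszul syzygy on $f_i$ and $f_j$ be a minimal first syzygy of $I$ --- equivalently, that $f_i,f_j$ yield a minimal Koszul relation in $I$ (equivalently still, $f_{\ell}\in\mf{m}J$ for the remaining index $\ell$). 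Since the three basis vectors of $K_2$ correspond bijectively to the three pairs of the fixed generating set of $C$, the number of rows of $u_2^{*}$ inside $\partial_2$ with a unit entry equals the number of such pairs, which proves (ii).

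The step that calls for care is the row count in (i): an unlucky minimal generating set of $C$ may contain several generators whose images in $I/\mf{m}I$ are nonzero but pairwise proportional, so a literal count of rows with a unit entry can exceed $\dim_k\operatorname{Im}(\bar{\phi})$. Accordingly, the assertion is best read as the count obtained after choosing the generators of $C$ to realize $\dim_k\operatorname{Im}(\bar{\phi})$, and one should check that this change of generators is harmless --- it is an invertible base change of $\mc{K}_{\bullet}$ and leaves $C$ a complete intersection (so its Koszul complex is still a minimal resolution). Everything else is a direct transcription of the formulas for $\partial_2$ and $\partial_3$ together with Nakayama's lemma.
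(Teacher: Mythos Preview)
Your proof is correct and follows essentially the same approach as the paper, which simply declares that ``the work above proves the following proposition'' and relies on the discussion of unit entries in $u_1$, $u_2$, $u_3$ that you have reorganized. Your added remark that the row count in (i) requires a well-chosen minimal generating set of $C$ (so as to realize $\dim_k\operatorname{Im}(\bar{\phi})$) is a genuine clarification of a point the paper leaves implicit.
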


\begin{cor}[\cite{B87}, Lemma 2.4] \label{trims}
With the same set-up as Proposition \ref{min Koszul relation}, the minimal free resolution of $I$ may be obtained by trimming the resolution $\mc{G}^*_{\bigcdot}$, and the trims are determined by the unit entries in the maps $\partial_2$ and $\partial_3$.
\end{cor}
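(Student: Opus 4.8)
The plan is to minimalize the mapping cone resolution $\mc{G}^*_{\bigcdot}$ of $R/J$ directly, by the standard trimming procedure, and to identify from the block structure of $\partial_{\bigcdot}$ exactly which free summands are removed.

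First I would observe that every unit entry of the differential of $\mc{G}^*_{\bigcdot}$ must lie in $\partial_2$ or $\partial_3$, and even then only in the blocks $u_1^*\colon F_1^*\to K_1^*$ of $\partial_3$ and $u_2^*\colon F_2^*\to K_2^*$ of $\partial_2$. This is immediate from the displayed block forms of $\partial_1,\partial_2,\partial_3$: every entry of every $d_i$ and every $\delta_i$ lies in $\mf{m}$ because $\mc{F}_{\bigcdot}$ and $\mc{K}_{\bigcdot}$ are minimal, and $u_3$ has no unit entry because $C\neq I$; in particular $\partial_1$ has no unit entries at all. Now run the trimming procedure: each time a differential has a unit entry, change bases so that it becomes the only nonzero entry in its row and column, then split off the contractible direct summand $R\xrightarrow{\sim}R$, lowering two consecutive free ranks by one. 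The process terminates because total ranks strictly decrease, and the terminal complex is the minimal free resolution of $R/J$. By the previous observation the first trim must take place inside the $u_1^*$-block of $\partial_3$ — removing a summand of $F_1^*$ (in homological degree $3$) against a summand of $K_1^*$ (degree $2$) — or inside the $u_2^*$-block of $\partial_2$ — removing a summand of $F_2^*$ against a summand of $K_2^*$ (degree $1$); and by Proposition~\ref{min Koszul relation} the number of such trims available is the number of generators of $C$ belonging to a minimal generating set of $I$, respectively the number of pairs of generators of $C$ giving a minimal Koszul relation in $I$. (Equivalently, one may instead choose the comparison map $u_{\bigcdot}$ carefully from the start — using a minimal generating set of $I$ containing the shared generators of $C$, and bases of $\mc{K}_{\bigcdot}$ adapted to the shared Koszul relations — so that these trims appear as identity blocks, visible without any base change.)

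The one point that requires care, and which I expect to be the main obstacle, is that trimming introduces no new unit entries outside the $u^*$-blocks, so the procedure halts exactly after the trims just described. When a unit inside the $u_1^*$-block of $\partial_3$ is cleared, the only adjacent nonzero map affected is $\partial_2$, and every correction term reaching an entry of the $\delta_2^*$-, $d_3^*$-, or $d_2^*$-block carries a factor from $\mf{m}$ — an entry of some $d_i$ or $\delta_i$, possibly multiplied by the reciprocal of the unit being eliminated against such an entry — so those blocks stay over $\mf{m}$; the only new unit entries that can appear lie again in the $u_1^*$-block, where they are cleared at later stages. The symmetric verification handles trims inside the $u_2^*$-block of $\partial_2$, which affects only $\partial_1$ and $\partial_3$, whose non-$u^*$ blocks are already over $\mf{m}$. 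Hence the terminal complex has all differential entries in $\mf{m}$, so it is the minimal graded free resolution of $R/J$, obtained from $\mc{G}^*_{\bigcdot}$ by removing precisely the free summands indexed by the unit entries of $\partial_2$ and $\partial_3$ (equivalently, quantified by Proposition~\ref{min Koszul relation}). The argument hinges on the block-triangular shape of $\partial_2,\partial_3$ together with the minimality of $\mc{F}_{\bigcdot},\mc{K}_{\bigcdot}$, which is exactly what keeps the $d^*$- and $\delta^*$-blocks unit-free throughout.
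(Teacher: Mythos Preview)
Your argument is correct and is exactly the approach the paper implicitly intends: the paper does not give its own proof but records the statement as a corollary of the preceding analysis of unit entries in $u_1,u_2,u_3$ (together with the citation to \cite{B87}), and your write-up simply carries out the standard minimalization of $\mc{G}^*_{\bigcdot}$, checking --- via the block forms of $\partial_1,\partial_2,\partial_3$ and the minimality of $\mc{F}_{\bigcdot},\mc{K}_{\bigcdot}$ --- that every unit to be trimmed sits in the $u_1^*$- or $u_2^*$-block and that trimming keeps the $d^*$- and $\delta^*$-blocks over $\mf{m}$. (Note the harmless typo in the statement: the minimal resolution produced is that of $R/J=R/(C:I)$, as you correctly treat it, not of $I$.)
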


\begin{cor} \label{construct Koszul}
Let $I$ be a grade $3$ perfect $R$-ideal containing the grade $3$ complete intersection $C=(f_1,f_2,f_3)$. If $f_1\in \mf{m}I$, then $f_2$ and $f_3$ yield a minimal Koszul relation in $C:I$.
\end{cor}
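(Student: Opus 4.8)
The plan is to reduce the statement to the analysis carried out just before Proposition~\ref{min Koszul relation}, but applied to the \emph{reverse} link rather than to $C:I$ itself. Write $J := C:I$. First I would assemble the standard facts of linkage over the Gorenstein ring $R$: since $C$ is a complete intersection with $\grade C = \grade I = 3$ and $I$ is unmixed (being perfect), the ideal $J$ is again grade $3$ and perfect, $C \subseteq J$, and $C:J = C:(C:I) = I$ by Peskine--Szpiro (\cite{PS74}). I would also note that $C \neq J$ --- otherwise $I = C:J = C:C = R$, contradicting $\grade I = 3$ --- and that the hypothesis $f_1 \in \mf{m}I$ already forces $C \neq I$, since otherwise $\mf{m}I = \mf{m}C$ and Nakayama would let the grade $3$ ideal $C$ be generated by $f_2, f_3$. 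Hence the pair $(J, C)$ satisfies precisely the running hypotheses of the mapping-cone discussion preceding Proposition~\ref{min Koszul relation}, with $J$ in the role of ``$I$'' there and $I = C:J$ in the role of the link ``$C:I$''.

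Next I would apply that discussion --- specifically the paragraph on unit entries in $u_2$ --- verbatim, now to a comparison map from the minimal free resolution of $J$ to the Koszul resolution of $C$, taking $f_1$ as the distinguished generator of $C$. It gives: $f_1 \in \mf{m}\cdot(C:J) = \mf{m}I$ if and only if $u_2(e_2\wedge e_3)$ has a unit entry, if and only if (by commutativity of $u_{\bigcdot}$) the Koszul relation between $f_2$ and $f_3$ is a minimal first syzygy of $J$ --- that is, $f_2$ and $f_3$ yield a minimal Koszul relation in $J = C:I$. Equivalently, one may read this off from Proposition~\ref{min Koszul relation}(ii) applied to Ferrand's resolution of $I = C:J$: the unit entry in $\partial_2$ forced by $f_1 \in \mf{m}I$ is produced precisely by the complementary pair of generators $\{f_2, f_3\}$. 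Since $f_1 \in \mf{m}I$ holds by hypothesis, the conclusion follows.

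The only step needing real care is the bookkeeping in the first paragraph: confirming that $J = C:I$ genuinely meets \emph{all} the standing hypotheses of the earlier discussion ($\grade 3$, perfect, $C \subsetneq J$, and $C:J = I$), so that the argument transfers word for word. This is routine Peskine--Szpiro linkage over a Gorenstein ring rather than anything delicate, but it is the one place where something must be verified rather than merely cited; with it in hand, the corollary is simply a re-reading of the mapping-cone computation already established.
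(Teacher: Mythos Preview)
Your proposal is correct and is precisely the argument the paper has in mind: the corollary is stated without proof because it is the preceding ``unit entries in $u_2$'' analysis read with the roles of $I$ and $J=C:I$ interchanged, using $C:(C:I)=I$. Your only addition is the explicit bookkeeping (that $J$ is grade $3$ perfect, $C\subsetneq J$, and $C:J=I$), which the paper leaves implicit; this is welcome but does not constitute a different approach.
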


Our approach to prove the main theorem is to study double links of the proposed non-SBL ideals. The previous statements about minimal Koszul relations play an important role in understanding these double links.    
\begin{defn}
Assume $I$ is a homogeneous perfect $R$-ideal, and $C$ and $D$ are homogeneous complete intersections such that
$$I\widesim{C} I_1\widesim{D} I_2.$$ 
If $\text{type}(C)\geq \text{type}(D)$, then way say that $I_2$ is a \textit{sequentially bounded double link} of $I$ with respect to $\text{type}(C)$.
\end{defn}

The classes of ideals we consider in the next section each have $\lambda=1$. Avoiding the minimal Koszul relation, double links of these ideals with repeated types has little effect on the minimal graded free resolution. We formalize this with a definition.

\begin{defn} \label{ghost def}
Let $I$ be a grade $3$ homogeneous perfect $R$-ideal containing a complete intersection $C$ of type $(a_1,a_2,a_3)$. Consider 
\[ I \widesim{C} I_1 \widesim{D} I_2,\]
where $\text{type}(D)=\text{type}(C)$. If $d(I_1) = r(R/I)$ (equivalently, $C$ avoids the minimal Koszul relations in $I$) and the minimal type of $I_1$ is $(a_1,a_2,a_3)$, then we say $I_2$ is a \textit{ghost double link} of $I$ with respect to $(a_1,a_2,a_3)$.  
\end{defn}

The word ghost is motivated by this observation: if $I_2$ is a ghost double link of $I$ with respect to $(a_1,a_2,a_3)$, and if $\mc{F}_{\bigcdot}$ is a minimal free resolution of $I$, then applying Ferrand's mapping cone twice shows that $\mc{F}_{\bigcdot}'$ is a free resolution of $I_2$, where 
\begin{align} \notag
F_1' &= F_1\oplus R(-a_1) \oplus R(-a_2) \oplus R(-a_3), \\ \notag
F_2' &= F_1\oplus R(-a_1) \oplus R(-a_2) \oplus R(-a_3), \text{ and} \\ \notag
F_3' &= F_3. \notag
\end{align}
The terms $R(-a_i)$ may or may not trim, and thus are examples of \textit{ghost terms}. In particular, for $1\leq j\leq 3$, we have $$\beta_{i,a_j}(R/I)\leq \beta_{i,a_j}(R/I_2)\leq \beta_{i,a_j}(R/I)+1$$
for $1\leq i\leq 2$, and $\beta_{3,j}(R/I)=\beta_{3,j}(R/I_2)$ for all $j$. There are, however, common scenarios that guarantee at least some of the ghost terms do trim.  

\begin{lem} \label{ci shift trims}
Assume $I_2$ is a ghost double link of $I$ with respect to $(a_1,a_2,a_3)$. If $C$ and $D$ are the intermediate complete intersections as above, then write $C=(g_1,g_2,g_3)$ and $D=(f_1,f_2,f_3)$ where $a_i=\deg f_i = \deg g_i$. Then for a fixed $j$, we have $\beta_{1,a_j}(R/I_2)=\beta_{1,a_j}(R/I)$ and $\beta_{2,a_j}(R/I_2) = \beta_{2,a_j}(R/I)$ if either of the following hold:
\begin{enumerate}[i.]
\item $g_j$ a minimal generator of $I$.
\item $g_j \in \mf{m}I$, and $f_k, f_{\ell}$ are weak associates of $g_k,g_{\ell}$ in $I_1$, respectively, where $j,k,$ and $\ell$ are distinct.
\end{enumerate}
\end{lem}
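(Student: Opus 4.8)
The strategy is to analyze, case by case, which ghost terms $R(-a_j)$ get trimmed when Ferrand's mapping cone is applied to pass from $I$ to $I_1$ and then from $I_1$ to $I_2$. Recall from the ghost double link setup that a resolution $\mc{F}'_{\bigcdot}$ of $I_2$ is built with $F'_1 = F_1 \oplus \bigoplus_i R(-a_i)$, $F'_2 = F_1 \oplus \bigoplus_i R(-a_i)$, $F'_3 = F_3$; the three summands $R(-a_i)$ in homological degrees $1$ and $2$ are the potential ghost terms, and Corollary \ref{trims} tells us that a term trims precisely when the corresponding map $\partial_2$ or $\partial_3$ has a unit entry, which by Proposition \ref{min Koszul relation} happens exactly when a generator of the intermediate complete intersection is part of a minimal generating set of the linked ideal (for $\partial_3$) or when a pair of generators yields a minimal Koszul relation (for $\partial_2$). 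So the whole lemma reduces to: track the generator $g_j$ (and the pair $g_k, g_\ell$) through the two links and check which of these two conditions is triggered at each stage.

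\medskip
\noindent\textbf{Case (i): $g_j$ is a minimal generator of $I$.} In the first link $I \widesim{C} I_1$, we are using $C = (g_1,g_2,g_3)$. Since $g_j$ is part of a minimal generating set of $I$, Proposition \ref{min Koszul relation}(i) gives a unit entry in the $\partial_3$ of this link in the row corresponding to $g_j$, so the ghost term $R(-a_j)$ created in homological degree $1$ of the resolution of $I_1$ is trimmed — equivalently, by Corollary \ref{construct Koszul} and the structure of Ferrand's cone, $g_j \in \mf{m}I_1$ (it is consumed as a "Koszul-type" generator rather than surviving minimally). Then in the second link $I_1 \widesim{D} I_2$ with $D = (f_1,f_2,f_3)$, $\deg f_j = a_j$, since $g_j \in \mf{m}I_1$ and $f_j$ is a weak associate of $g_j$ (both having degree $a_j$ and $D \subseteq I_1$), we get $f_j \in \mf{m}I_1$ as well; hence by the "unit entries in $u_2$" analysis the Koszul relation between $f_k$ and $f_\ell$ is a minimal first syzygy of $I_1$, and Proposition \ref{min Koszul relation}(ii) forces a unit entry in the $\partial_2$ of this second link, trimming the degree-$a_j$ ghost term in homological degree $2$. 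Putting the two links together and comparing with $\mc{F}'_{\bigcdot}$, both copies of $R(-a_j)$ (in degrees $1$ and $2$) are trimmed, which is exactly $\beta_{1,a_j}(R/I_2) = \beta_{1,a_j}(R/I)$ and $\beta_{2,a_j}(R/I_2) = \beta_{2,a_j}(R/I)$.

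\medskip
\noindent\textbf{Case (ii): $g_j \in \mf{m}I$ and $f_k, f_\ell$ are weak associates of $g_k, g_\ell$ in $I_1$.} Here the roles are reversed relative to case (i). Since $g_j \in \mf{m}I$, applying Corollary \ref{construct Koszul} to the first link shows $g_k$ and $g_\ell$ yield a minimal Koszul relation in $I_1$; equivalently there is a unit entry in the $\partial_2$ of the first link, trimming one of the degree-$a_j$ ghost terms. For the second link, the hypothesis that $f_k$ is a weak associate of $g_k$ and $f_\ell$ of $g_\ell$ in $I_1$ means $\{f_k, f_\ell\}$ is part of a minimal generating set of $I_1$ (because $\{g_k,g_\ell\}$ is — this is the observation recorded in the "$u_2$" discussion), so by Proposition \ref{min Koszul relation}(i) the $\partial_3$ of the link $I_1 \widesim{D} I_2$ has unit entries in the rows for $f_k$ and $f_\ell$; the relevant one for us is that the degree-$a_j$ ghost term gets trimmed in the remaining homological degree as well. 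As in case (i), assembling the two mapping cones and matching against $\mc{F}'_{\bigcdot}$ yields $\beta_{1,a_j}(R/I_2) = \beta_{1,a_j}(R/I)$ and $\beta_{2,a_j}(R/I_2) = \beta_{2,a_j}(R/I)$.

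\medskip
\noindent\textbf{Main obstacle.} The delicate point is the bookkeeping of \emph{which} homological degree each trim occurs in, and making sure that across the two successive applications of Ferrand's cone the two potential copies of $R(-a_j)$ are the ones killed — as opposed to some other ghost term of the same degree $a_j$ (note the $a_i$ need not be distinct, and the resolution of $I$ itself may already have Betti numbers in degree $a_j$). One must carefully use the bound $\beta_{i,a_j}(R/I) \le \beta_{i,a_j}(R/I_2) \le \beta_{i,a_j}(R/I)+1$ from the ghost double link discussion to conclude that trimming the single new ghost copy in each of degrees $1$ and $2$ is both necessary and sufficient for the claimed equalities. A secondary subtlety is verifying the weak-associate transfers (e.g. that $g_j \in \mf{m}I$ together with $f_j$ a degree-$a_j$ element of $D \subseteq I_1$ implies $f_j \in \mf{m}I_1$), which requires knowing that $I_1$ has no minimal generators in degrees below $a_j$ that could interfere — this should follow from the minimal type hypothesis built into the definition of ghost double link, but it is the kind of point that needs to be stated precisely rather than waved through.
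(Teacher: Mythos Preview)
Your framework---track which ghost copies of $R(-a_j)$ trim across the two cones---matches the paper's, but both cases contain genuine errors in the mechanism.

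In case (i) you claim that $g_j$ minimal in $I$ forces $g_j\in\mf m I_1$. The opposite holds: the summand $R(-a_j)$ in step~$1$ of the mapping-cone resolution of $I_1$ records $g_j$ as a generator, and by the discussion before Proposition~\ref{min Koszul relation} it trims only if $g_k,g_\ell$ yield a minimal Koszul relation in $I$, which the ghost-double-link hypothesis rules out. So $g_j$ stays minimal in $I_1$, and your subsequent assumption that $f_j$ is a weak associate of $g_j$ (hence $f_j\in\mf m I_1$) is neither a hypothesis nor a consequence. The paper's argument is different: the unit in $\partial_3$ of the first cone (from $g_j$ minimal in $I$) trims $R(-(a_k+a_\ell))$ between steps~$2$ and~$3$ of the $I_1$ resolution; after dualizing for the second cone, that same unit becomes a unit in $\partial_2^{I_2}$ and trims one copy of $R(-a_j)$ from each of steps~$1$ and~$2$ of the $I_2$ resolution. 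No statement about $f_j$ is needed.

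In case (ii) the gap is sharper. You correctly deduce from Corollary~\ref{construct Koszul} that $g_k,g_\ell$ yield a minimal Koszul relation in $I_1$, but you then spend the weak-associate hypothesis only to conclude that $f_k,f_\ell$ are minimal \emph{generators} of $I_1$ and invoke Proposition~\ref{min Koszul relation}(i). That proposition trims $R(-(a_j+a_\ell))$ and $R(-(a_j+a_k))$ between steps~$2$ and~$3$---not the ghost $R(-a_j)$---and those trims are already absorbed in the ghost-double-link bookkeeping (they are exactly what reduces $F_3'$ back to $F_3$), so nothing new has been gained. What is actually required is that $f_k,f_\ell$ yield a minimal \emph{Koszul relation} in $I_1$. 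The paper establishes this by the computation: writing $f_k=ug_k+h$ and $f_\ell=u'g_\ell+h'$ with $u,u'$ units and $h,h'\in\mf m I_1$, the Koszul syzygy of $(f_k,f_\ell)$ equals a unit multiple of the Koszul syzygy $s$ of $(g_k,g_\ell)$ modulo $\mf m\,\mathrm{Syz}_1(R/I_1)$; since $s$ is minimal, so is the new syzygy. Now Proposition~\ref{min Koszul relation}(ii) applied to the \emph{second} cone gives a unit in $\partial_2^{I_2}$ that trims the ghost $R(-a_j)$ in step~$1$ against the ghost $R(-a_j)$ in step~$2$ (the latter being the dual of $s$). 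This transfer of the minimal-Koszul property through weak associates is the substance of the lemma and is missing from your proposal.
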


\begin{proof}
Assume $j=1$. If $f_1$ is a minimal generator of $I$, then by Proposition \ref{min Koszul relation}.i., the term $R(-(a_2+a_3))$ in steps 2 and 3 of the resolution of $I_1$ can trim. In Ferrand's mapping cone resolution of $I_2=D:I_1$, these terms take the form $R(-a_1)$ in steps 1 and 2. Since they can trim, the only possible difference in the Betti tables of $I$ and $I_2$ is from the ghost terms $R(-a_2)$ and $R(-a_3)$ in steps 1 and 2 of the resolution of $I_2$.
\par To prove ii., let $K=\text{Syz}_1(R/I_1)$. If $g_j\in \mf{m}I$, then by Corollary \ref{construct Koszul}, the Koszul relation between $g_2$ and $g_3$ is a minimal first syzygy in $I_1$, which we denote by $s\in K$. Since $f_2 = ug_2 + h$ and $f_3=u'g_3+h'$ for some $u,u' \in R\bs \mf{m}$ and $h,h'\in \mf{m}I_1$, the Koszul relation between $f_2$ and $f_3$ is of the form $vs+t$ where $v\in R\bs \mf{m}$ and $t\in \mf{m}K$. In particular, $f_2$ and $f_3$ yield a minimal Koszul relation in $I_1$, which by Proposition \ref{min Koszul relation}.ii. concludes the proof. 
\end{proof}

We close this section by formulating to what extent we can expect the appearance of minimal Koszul relations in grade 3 licci ideals. There are examples of grade $3$ licci ideals with $\lambda=0$ (see \cite{B87}, Example 4.6). However, there are no known examples of grade $3$ licci ideals with $\lambda=0$ that cannot be minimally linked to an ideal with $\lambda>0$. Also, the only way to link a grade $3$ perfect ideal to a complete intersection is to use a sequence of links in which there are sufficiently many regular sequences with pairs of generators yielding minimal Koszul relations. Indeed, such regular sequences are the only guarantee that the total Betti numbers will decrease in double links. 

\begin{prop} \label{non licci}
Assume $R$ is a local Gorenstein ring, and let $I$ be a grade $3$ perfect ideal with $r(R/I)=r$. If $I$ is not licci, then there is a sequence of links 
\[ I = I_0 \sim I_1 \sim \dots \sim I_n = J \]
where $\lambda(J)=0$ and $n\leq 2(r-2)$. 
\end{prop}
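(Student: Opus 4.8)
The plan is to induct on the CM type $r = r(R/I)$. The key invariant to track is the deviation $d(I_k) = \beta_1(R/I_k) - 3$, and the basic mechanism is that a carefully chosen double link either strictly decreases this deviation or produces an ideal with $\lambda = 0$; since the deviation is bounded below by $0$ and the CM type of a grade $3$ ideal is bounded by $1 + d(I)$ (Buchsbaum--Eisenbud structure theory forces $\beta_3 \le \beta_1 - 2$ for grade $3$ perfect ideals, i.e. $r \le d+1$), this gives the linear bound. More precisely, I would first observe that if $\lambda(I) = 0$ we are done with $n = 0$, so assume $\lambda(I) \ge 1$. If $I$ is already a complete intersection it is licci, contradicting the hypothesis; so $I$ is not a complete intersection, and in particular $r \ge 2$ (a grade $3$ perfect ideal that is not a complete intersection has $r \ge 2$ by structure theory). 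This also explains why the bound $2(r-2)$ makes sense: it is vacuous exactly when $r = 2$, where the claim is that a non-licci, non-CI ideal already has $\lambda(I) = 0$.

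The heart of the argument is the inductive step. Given $I$ with $\lambda(I) \ge 1$, choose a pair $f_2, f_3$ of elements of $I$ yielding a minimal Koszul relation, and extend to a complete intersection $C = (f_1, f_2, f_3) \subseteq I$ with $f_1 \in \mathfrak{m}I$ (possible by prime avoidance, enlarging $k$ if necessary since $R$ is local with—after faithfully flat base change—infinite residue field; this does not affect the licci property or $\lambda$). Form the direct link $I_1 = C : I$. By Proposition~\ref{min Koszul relation}.ii and Corollary~\ref{trims}, the unit entries in $\partial_2$ coming from the minimal Koszul relation on $f_2, f_3$ trim the term $R(-(\deg f_2 + \deg f_3))$ out of steps $2$ and $3$ of Ferrand's resolution of $I_1$; combined with the fact that $f_1 \in \mathfrak{m}I$ was chosen inside the ideal (so it contributes a genuine new generator to $I_1$ rather than a trimmable unit), a bookkeeping comparison of the Ferrand resolution of $I_1$ against that of $I$ shows $r(R/I_1) \le r(R/I) - 1$, or more precisely that after a second link back down, $d$ has dropped. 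Then link once more, $I_2 = D : I_1$ for a suitable $D$, to return to an ideal whose deviation is strictly smaller than $d(I)$ while $I_2$ remains non-licci (linkage preserves the licci/non-licci dichotomy). Apply the inductive hypothesis to $I_2$: there is a chain of length $\le 2(r(R/I_2) - 2) \le 2(r(R/I) - 2) - 2$ reaching an ideal with $\lambda = 0$, and prepending the two links above gives a chain of length $\le 2(r(R/I) - 2)$.

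The main obstacle, and the step requiring genuine care, is the double-link bookkeeping: showing that two links of this shape actually force $d(I_2) < d(I)$ (equivalently $r(R/I_2) < r(R/I)$) rather than merely $d(I_2) \le d(I)$. One must rule out the possibility that the trim gained from the minimal Koszul relation in the first link is exactly cancelled by new minimal generators or ghost terms appearing in the second link. This is precisely the kind of analysis developed in the ``ghost term'' discussion (Definition~\ref{ghost def}, Lemma~\ref{ci shift trims}) and promised for Appendix~B: one needs that choosing $D$ to avoid creating new minimal Koszul relations back in $I_1$ (or, dually, choosing it to maximally re-use generators) keeps the total Betti number from rebounding. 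A secondary technical point is the reduction to infinite residue field and checking that the minimal type and the value of $\lambda$ behave well under the faithfully flat extension $R \to R \otimes_k k(t)$, so that "not licci" is genuinely preserved along the constructed chain. Once these are in place the induction closes immediately.
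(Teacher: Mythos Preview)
Your overall shape (double links decrease an invariant, then induct) is right, but the bookkeeping is tangled and one key step does not go through as written.

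First, the assertion ``$r \le d+1$'' (i.e.\ $\beta_3 \le \beta_1 - 2$) is false for grade $3$ perfect ideals in general: link a $5$-generated Gorenstein ideal by a complete intersection contained in $\mf{m}I$; no trims occur in $u_1$, so the link has $r = 5$ but $d = 1$. Relatedly, ``$d(I_2) < d(I)$ (equivalently $r(R/I_2) < r(R/I)$)'' is not an equivalence; the two invariants are independent. More seriously, your choice $f_1 \in \mf{m}I$ works against you. With $f_2,f_3$ minimal and $f_1 \in \mf{m}I$, only two rows of $u_1$ have units, so $r(R/I_1) = \beta_1(R/I) - 2 = d(I) + 1$, which is unrelated to $r(R/I)$; your claimed inequality $r(R/I_1) \le r(R/I) - 1$ does not follow. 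For the second link you then need at least two Koszul trims to force $d(I_2) < d(I)$, but Corollary~\ref{construct Koszul} hands you only one. The ghost-term machinery you invoke (Definition~\ref{ghost def}, Lemma~\ref{ci shift trims}, Appendix~B) is built for a different purpose---controlling Betti tables under repeated links of a \emph{fixed} type for specific ideals---and does not supply the missing trim here.

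The paper's argument sidesteps all of this by tracking $r$ rather than $d$ and choosing the second link differently. Step one: take $C$ containing a minimal Koszul pair, so Proposition~\ref{min Koszul relation}.ii gives $d(I_1) \le r(R/I) - 1$. Step two: take $C_1$ generated by part of a \emph{minimal generating set of $I_1$} (not by avoiding Koszul relations), so all three rows of $u_1$ have units and Proposition~\ref{min Koszul relation}.i gives $r(R/I_2) = \beta_1(R/I_1) - 3 = d(I_1) < r(R/I)$. Iterating, $r$ drops by at least one every two links; since CM type $1$ is Gorenstein hence licci, the non-licci hypothesis forces $\lambda = 0$ somewhere within $2(r-2)$ links. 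No ghost-term analysis or residue-field extension is needed.
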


\begin{proof}
If $\lambda(I)>0$, then there is a complete intersection $C\subseteq I$ such that two minimal generators of $C$ yield a minimal Koszul relation in $I$. If $I_1=C:I$, then $d(I_1) < r(R/I)$ by Proposition \ref{min Koszul relation}.ii. If $C_1$ is a complete intersection generated by part of a minimal generating set of $I_1$ and $I_2=C_1:I_1$, then $r(R/I_2) = d(I_1) < r(R/I)$ by Proposition \ref{min Koszul relation}.i. Continuing in this manner, if $\lambda(I_{2t})>0$, we can repeat the steps above to construct an ideal $I_{2(t+1)}$ linked in two steps to $I_{2t}$ with $r(R/I_{2(t+1)})<r(R/I_{2t})$. All grade $3$ perfect $R$-ideals of CM type $1$ are licci (\cite{W73}), so this process must terminate at or before $t=r-2$. 
\end{proof}

Using deformation theory and Gaeta's result that all grade $2$ perfect ideals are licci, it can be deduced that all grade $3$ perfect ideals containing a linear form are licci. However, the following lemma provides an alternate proof using minimal Koszul relations.

\begin{lem} \label{linear Koszul}
Assume $R$ is a local Gorenstein ring, and let $I$ be a nonzero $R$-ideal minimally generated by $f_1, \dots, f_n$. If $\ell$ is a linear form regular on $R/I$, then the Koszul relation between $\ell$ and $f_i$ is a minimal relation in the ideal $(I,\ell)$ for all $i$.
\end{lem}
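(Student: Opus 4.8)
The plan is to unwind the definition directly. Writing $\phi\colon G\twoheadrightarrow (I,\ell)$ for the surjection from the free module $G=R\epsilon_0\oplus R\epsilon_1\oplus\cdots\oplus R\epsilon_n$ determined by $\phi(\epsilon_0)=\ell$ and $\phi(\epsilon_j)=f_j$, the Koszul relation between $\ell$ and $f_i$ is the element $\sigma_i:=f_i\epsilon_0-\ell\epsilon_i$ of $\ker\phi=\text{Syz}_1(R/(I,\ell))$, and the claim ``$\sigma_i$ is a minimal relation'' is exactly the assertion $\sigma_i\notin\mf{m}\ker\phi$. A preliminary check is that $\ell,f_1,\dots,f_n$ really is a minimal generating set of $(I,\ell)$: a dependence among their residues in $(I,\ell)/\mf{m}(I,\ell)$ with a unit coefficient is ruled out by a short argument using that $\ell$ is a nonzerodivisor on $R/I$, that $\ell\in\mf{m}$, and that $f_1,\dots,f_n$ minimally generate $I$; so $\phi$ is a minimal presentation and ``minimal relation'' has the expected meaning.

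The first substantive step is to write down a generating set for $\ker\phi$ adapted to the hypothesis. Given $(g_0,g_1,\dots,g_n)\in\ker\phi$, the identity $g_0\ell=-\sum_{j\ge 1}g_jf_j$ shows $g_0\ell\in I$, hence $g_0\in I$ because $\ell$ is regular on $R/I$; writing $g_0=\sum_j b_jf_j$ and subtracting $\sum_j b_j\sigma_j$ kills the $\epsilon_0$-coordinate and leaves the element $\iota(w)$, where $\iota\colon F_1\hookrightarrow G$ sends $e_j\mapsto\epsilon_j$ (the evident inclusion of the free module on $f_1,\dots,f_n$) and $w=(g_1+b_1\ell,\dots,g_n+b_n\ell)\in\text{Syz}_1(R/I)$. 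Therefore
\[
\ker\phi=\sum_{j=1}^{n}R\,\sigma_j\;+\;\iota\bigl(\text{Syz}_1(R/I)\bigr).
\]

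The decisive step is then brief. Suppose toward a contradiction that $\sigma_i\in\mf{m}\ker\phi$ for some $i$, say $i=1$. Expanding each of the finitely many kernel elements appearing in such an expression via the generating set above and collecting terms yields $\sigma_1=\sum_{j=1}^{n}c_j\sigma_j+\iota(w)$ with every $c_j\in\mf{m}$ and some $w\in\text{Syz}_1(R/I)$. Now compare $\epsilon_0$-coordinates: the left side contributes $f_1$, the terms $c_j\sigma_j$ contribute $\sum_j c_jf_j$, and $\iota(w)$ contributes $0$. Hence $f_1=\sum_j c_jf_j$ with all $c_j\in\mf{m}$, i.e.\ $f_1\in\mf{m}I$, contradicting that $f_1$ belongs to a minimal generating set of $I$. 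Relabeling gives the statement for every $i$.

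The one place that needs a little care (the ``hard part,'' such as it is) is the bookkeeping in the middle step: one must verify that the description of $\ker\phi$ is a genuine $R$-module generating statement, so that a membership $\sigma_1\in\mf{m}\ker\phi$ does produce coefficients $c_j\in\mf{m}$ on the $\sigma_j$ after collection while the leftover term lands in $\iota(\text{Syz}_1(R/I))$ and hence contributes nothing to the $\epsilon_0$-coordinate. The rest --- using regularity of $\ell$ modulo $I$ to place $g_0$ inside $I$, and the coordinate comparison --- is routine. It is worth noting that linearity of $\ell$ is never used beyond $\ell\in\mf{m}$; the argument only needs that $\ell$ is a nonzerodivisor on $R/I$ lying in the maximal ideal.
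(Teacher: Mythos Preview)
Your proof is correct. The paper's argument is more structural: it takes the short exact sequence
\[
0 \to R/(I:\ell)(-1) \xrightarrow{\cdot \ell} R/I \to R/(I,\ell) \to 0,
\]
observes that $I:\ell=I$ by regularity, and builds the resolution of $R/(I,\ell)$ as the mapping cone of the multiplication-by-$\ell$ chain map on a minimal resolution of $R/I$; since every block of the resulting differentials is either a minimal differential $d_i$ or $\ell\cdot\mathrm{id}$, all entries lie in $\mf m$ and the cone is already minimal, so the copies of $F_1(-1)$ in homological degree $2$---which map precisely to the Koszul relations $\sigma_i$---survive as minimal syzygies. Your decomposition $\ker\phi=\sum_j R\sigma_j+\iota(\mathrm{Syz}_1(R/I))$ is exactly the content of that mapping cone in degree $2$, and your $\epsilon_0$-coordinate comparison is the hands-on verification that the relevant block has no unit entries. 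So the two arguments are the same idea: the paper packages it homologically, while you unpack it explicitly at the level of generators and coordinates. Your closing remark that only $\ell\in\mf m$ (not $\deg\ell=1$) is used is correct and is equally visible in the paper's approach, since minimality of the mapping cone only requires $\ell\in\mf m$.
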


\begin{proof}
Consider the short exact sequence
\[ 0 \to R/I:\ell(-1) \xrightarrow{\cdot \ell} R/I \to R/(I,\ell) \to 0.\]
Since $\ell$ is regular on $I$, it follows that $I:\ell=I$. Thus, constructing the resolution of $R/(I,\ell)$ as the mapping cone of $R/I:\ell \to R/I$ from the short exact sequence above, the generators of $I:\ell$ correspond to minimal Koszul relations in $(I,\ell)$.
\end{proof}

\begin{cor} \label{g3 linear form}
If $R$ is a local Gorenstein ring, then any grade $3$ perfect $R$-ideal containing a linear form is licci.
\end{cor}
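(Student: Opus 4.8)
The plan is to peel off the linear form and reduce to Gaeta's theorem in grade $2$. Let $\ell\in I$ be the linear form; after noting that $\ell$ may be taken to be a nonzerodivisor on $R$ (automatic when $R$ is a polynomial ring, and the substance of the ``deformation'' remark above in general), pass to the Gorenstein ring $\bar R:=R/(\ell)$ and set $\bar I:=I\bar R$. Since $\ell\in I$ is $R$-regular one has $\grade(\bar I,\bar R)=\grade(I,R)-1=2$; moreover $\bar R/\bar I=R/I$ is Cohen--Macaulay of codimension $2$ over $\bar R$ and (clearly when $R$ is regular, and by the usual deformation argument otherwise) of finite projective dimension, so $\bar I$ is a grade $2$ perfect ideal of $\bar R$. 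By \cite{G54} it is licci: there is a chain of direct links $\bar I=\bar I_0\sim\bar I_1\sim\dots\sim\bar I_m=\bar C$ in $\bar R$ with $\bar C$ a grade $2$ complete intersection.

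I would then lift this chain verbatim to $R$. Let $I_j\subseteq R$ be the preimage of $\bar I_j$, so $I_0=I$ and each $I_j\supseteq(\ell)$ is a proper unmixed ideal of grade $3$ (again using that $\ell$ is $R$-regular and lies in $I_j$, together with the fact that grade drops by exactly one modulo such an $\ell$ in a Cohen--Macaulay ring). If the link $\bar I_j\sim\bar I_{j+1}$ is effected by the grade $2$ complete intersection $\bar D_j=(\bar d_1,\bar d_2)\subseteq\bar I_j$, lift $d_1,d_2$ to $I_j$ — a general lift gives an $R$-regular sequence, using $\grade(\bar I_j,\bar R)=2$ and that $k$ is infinite — and put $D_j':=(\ell,d_1,d_2)$, a grade $3$ complete intersection contained in $I_j$ because $\ell$ is $R$-regular. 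A direct colon computation, needing only that $\ell\in I_j$ and $\ell\in D_j'$, shows $D_j':I_j$ is the preimage of $\bar D_j:\bar I_j=\bar I_{j+1}$, that is, $D_j':I_j=I_{j+1}$, and symmetrically $D_j':I_{j+1}=I_j$; hence $I_j\sim I_{j+1}$ by $D_j'$ (the Proposition of \cite{PS74}). Finally $I_m$ is the preimage of the grade $2$ complete intersection $\bar C$, hence of the form $(\ell)$ plus two lifted generators, which is a grade $3$ complete intersection. Thus $I=I_0\sim\dots\sim I_m$ is a chain of links to a complete intersection and $I$ is licci.

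The point at which Lemma \ref{linear Koszul} enters — and what makes this a proof ``using minimal Koszul relations'' rather than the bare deformation argument — is that adjoining $\ell$ to an ideal on which it is a nonzerodivisor makes the Koszul relations between $\ell$ and the generators minimal, so in Ferrand's mapping cone for each lifted link $D_j':I_j$ the expected terms trim and no degeneration occurs; this is exactly what guarantees that $D_j'$ is genuinely a complete intersection of the predicted type and that the Betti table of $I_{j+1}$ is the one Ferrand's construction predicts. I expect the main obstacle to be precisely this bookkeeping around $\ell$: verifying that it is, and remains, a nonzerodivisor on the relevant quotients, that grade drops by exactly one modulo it, and that forming colon ideals commutes with reduction modulo $\ell$. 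All of this rests on $R$ being Cohen--Macaulay (so grade equals height and grade behaves well under reduction by a regular element) together with a routine genericity argument over an infinite residue field to produce the regular sequences $d_1,d_2$; in the polynomial-ring case of primary interest these issues essentially evaporate.
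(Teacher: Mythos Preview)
Your argument is correct, but it is precisely the ``deformation theory and Gaeta'' route that the paper explicitly mentions and then sets aside in the paragraph immediately preceding Lemma~\ref{linear Koszul}. The paper's own proof is genuinely different: it never leaves grade~$3$. Instead it chooses a complete intersection $C\subseteq I$ containing $\ell$, invokes Lemma~\ref{linear Koszul} to see that $\ell$ yields minimal Koszul relations with the other two generators of $C$ (so $\lambda(I)\geq 2$), observes that $\ell\in C\subseteq C:I$ so the same holds for the link, and then feeds this into the mechanism of Proposition~\ref{non licci}: since every ideal in the chain has $\lambda>0$, the CM type can be driven down to $1$, and grade~$3$ Gorenstein ideals are licci by \cite{W73}. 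Thus the paper trades Gaeta's theorem for Watanabe's, and trades your lift-mod-$\ell$ bookkeeping for the Koszul-relation bookkeeping that is the running theme of Section~2.

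Two small remarks on your write-up. First, your appeal to an infinite residue field and to ``general lifts'' is unnecessary: if $\bar d_1,\bar d_2$ is a regular sequence in $\bar R$, then \emph{any} lifts $d_1,d_2$ make $\ell,d_1,d_2$ a regular sequence in $R$, simply because regularity is tested modulo the previous elements. Second, your final paragraph attempting to locate Lemma~\ref{linear Koszul} inside your argument is misplaced. Your proof does not use it: the facts that $D_j'$ is a complete intersection and that $D_j':I_j=I_{j+1}$ follow from the regular-sequence and colon computations you already gave, with no reference to Ferrand's mapping cone or to which syzygies are minimal. Lemma~\ref{linear Koszul} is the engine of the \emph{paper's} proof, not of yours; the two arguments are parallel, not nested.
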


\begin{proof}
If $I$ is a grade $3$ perfect $R$-ideal containing a linear form, then it is possible to construct a complete intersection $C\subseteq I$ containing the linear form. By Lemma \ref{linear Koszul}, the generators of $C$ yield at least two minimal Koszul relations in $I$. Since $C:I$ contains the same linear form, by Lemma \ref{linear Koszul} we have $\lambda(C:I)>0$. From this we see that as long as the linear form is used to construct links, there is an arbitrarily long sequence of links such that $\lambda > 0$ for any ideal in the sequence. Thus $I$ is licci by Proposition \ref{non licci}.
\end{proof}


\section{Special Classes of Grade 3 Licci Ideals}
In this section we recall how the non-minimally licci ideals of \cite{HMNU07} are constructed, and we show they are SBL. Once fixing certain parameters on these ideals, we construct a direct link with a carefully chosen regular sequence to form the class of our non-SBL ideals. There are two technical results concerning two more classes of ideals, which are realized from evenly many sequentially bounded links from our non-SBL ideals. Both of these latter classes of ideals are SBL and contain a quadric, but we show that the SBL property cannot be attained using the quadric. This result is an important step to proving that our main class of ideals are not SBL. 

\begin{defn} \label{SBL}
Suppose $I$ is a homogeneous perfect $R$-ideal linked to a homogeneous complete intersection $I_s$ by 
\[ I=I_0 \widesim{C_0} I_1 \widesim{} \dots \widesim{} I_{s-1} \widesim{C_{s-1}} I_s,\]
where $C_i$ is homogeneous for all $i=0,\dots, s-1$. 
\begin{enumerate}[i.]
\item If $C_i$ has minimal type in $I_i$ for all $i=0,\dots, m-1$, then $I$ is $\textit{minimally licci}$. 
\item If $\text{type}(C_i) \geq \text{type}(C_{i+1})$ for all $i=0,\dots, m-1$, then $I$ is \textit{sequentially bounded licci}. 
\end{enumerate}
\end{defn}

\begin{rem}
With the set-up of Definition \ref{SBL}, we may only be concerned with the types used in the links and write  
\[ I=I_0 \widesim{\text{type}(C_0)} I_1 \widesim{} \dots \widesim{} I_{s-1} \widesim{\text{type}(C_{s-1})} I_s.\]
\end{rem}

\begin{prop} \label{direct link to ML}
Let $I$ be a homogeneous $R$-ideal. If $I$ is directly linked to a minimally licci ideal by a homogeneous regular sequence, then $I$ is sequentially bounded licci. Further, if $I$ has grade $3$ and contains a linear form, then $I$ is minimally licci.
\end{prop}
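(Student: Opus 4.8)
The plan is to prove the two assertions separately, using the definitions of SBL and minimally licci together with Ferrand's mapping cone (Theorem~\ref{Ferrand}) and the behavior of the minimal type under direct links.

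For the first statement, suppose $J$ is minimally licci and $I \widesim{C} J$ is a direct link by a homogeneous regular sequence $C$ of type $\text{type}(C) = (c_1,\dots,c_n)$. By definition of minimally licci, there is a chain
\[ J = J_0 \widesim{D_0} J_1 \widesim{} \dots \widesim{D_{s-1}} J_s \]
with each $D_i$ of minimal type in $J_i$, terminating at a complete intersection $J_s$. Prepending the link $I \widesim{C} J$ gives a chain of homogeneous links from $I$ to $J_s$, so to conclude that $I$ is SBL we only need $\text{type}(C) \ge \text{type}(D_0)$. Now $D_0$ is of \emph{minimal} type in $J$, and $C \subseteq J$ is \emph{some} complete intersection of grade $n$ contained in $J$; by Definition~1.8.iii (the minimal type of an ideal is the minimum over the types of all grade-$n$ complete intersections it contains), we get $\text{type}(D_0) \le \text{type}(C)$ automatically. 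Hence the prepended chain is sequentially bounded, and $I$ is SBL. The only subtlety here is to check that the regular sequence $C$ indeed lies inside $J$: this is immediate since $I \sim J$ via $C$ forces $C \subseteq I \cap J$.

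For the second statement, assume in addition that $\grade I = 3$ and $I$ contains a linear form $\ell$. I would argue that $I$ is in fact minimally linked to $J$ by a complete intersection of minimal type in $I$, so that the minimality propagates. Since $I$ contains a linear form, the grade-$1$ jump is $g_1(I) = 1$. The strategy is: first build a complete intersection $C' \subseteq I$ realizing the minimal type $(1, a_2, a_3)$ of $I$ with $f_1 = \ell$ (possible because $I$ is grade $3$ and contains $\ell$, using prime avoidance as in Corollary~\ref{g3 linear form}); then show $C' : I$ is again minimally licci, so that the chain $I \widesim{C'} (C':I) \widesim{} \dots$ exhibits $I$ as minimally licci. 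For the second step I would use that $C' : I$ contains $\ell$ as well, hence $C':I$ is grade $3$ and contains a linear form, and then invoke the already-established part of the proposition together with an induction or a direct appeal to the fact (Corollary~\ref{g3 linear form}, Lemma~\ref{linear Koszul}) that such ideals are licci via links that keep using $\ell$. The cleanest route is: minimal-type links out of an ideal containing a linear form can always be chosen to contain that linear form (since $g_1 = 1$ and $\ell \in I$), and each such link again contains $\ell$; running Ferrand's mapping cone with this $C'$ and tracking trims via Corollary~\ref{trims} shows the resolution stays controlled, and the process terminates at a complete intersection as in Proposition~\ref{non licci}.

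The main obstacle is the second step of the second part: verifying that the particular direct link $C' : I$ obtained from a \emph{minimal}-type complete intersection is itself minimally licci, rather than merely licci. One must ensure that choosing $C'$ of minimal type in $I$ does not force a non-minimal choice at the next stage — i.e.\ that minimality is preserved down the chain. I expect this to follow from the linear form persisting in every link (so $g_1 \equiv 1$ throughout) combined with Migliore--Nagel-type control on the remaining degrees, but making the bookkeeping on the types $(a_2,a_3)$ precise — in particular showing the minimal type of $C':I$ is compatible with continuing minimally — is where the real work lies, and may require the appendix lemmas on ghost terms and trims referenced in the outline.
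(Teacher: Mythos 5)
Your proof of the first assertion is correct and is essentially the paper's proof verbatim: since $J = C:I$ we have $C \subseteq J$, so $\text{type}(C) \geq \text{type}(D)$ for $D$ of minimal type in $J$, and prepending the link $I \widesim{C} J$ to a chain of minimal links out of $J$ gives a sequentially bounded chain for $I$.

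For the second assertion, the paper's own proof is terser than yours: it simply invokes Corollary~\ref{g3 linear form}. That citation is a little misleading, since the corollary's \emph{statement} only yields ``licci,'' not ``minimally licci''; what is really being used is the corollary's \emph{argument} (keep the linear form $\ell$ in every complete intersection and appeal to Lemma~\ref{linear Koszul} and Proposition~\ref{non licci}). Your outline captures exactly that idea, so you are on the intended track. However, there is a genuine gap in the way you phrase it. You propose to show that the single link $C':I$ is ``again minimally licci'' and then induct, but a single minimal link need not reduce the CM type or any other natural terminating invariant, so the induction does not close as stated. The termination in Proposition~\ref{non licci} relies on \emph{pairs} of links: the first link uses minimal Koszul relations to reduce deviation, and the second uses a complete intersection drawn from a minimal generating set to convert deviation into CM type. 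To run that two-step decrease with \emph{minimal}-type complete intersections, one needs the observation you did not make explicit: when $\ell \in I$, the entries $g_2(I)$ and $g_3(I)$ of the minimal type can always be realized by minimal generators of $I$ lying outside $(\ell)$ (since $R/(\ell)$ is a domain, a homogeneous element of minimal degree not in $(\ell)$ forces a minimal generator of the same degree outside $(\ell)$). With that, $C' = (\ell, f_2, f_3)$ is simultaneously of minimal type, built from minimal generators, and $\lambda_{C'} \geq 2$ via Lemma~\ref{linear Koszul}, so after two minimal links the CM type strictly drops, and the chain terminates at a grade-$3$ Gorenstein ideal, which is minimally licci by Migliore--Nagel. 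Alternatively, one can pass to $\bar I = I/(\ell)$ in $R/(\ell)$: minimal-type links of $\bar I$ (Gaeta) lift to minimal-type links of $I$ containing $\ell$, giving a cleaner reduction to the grade-$2$ case that your proposal does not mention.
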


\begin{proof}
For the first statement, assume $J=C:I$ where $C$ is a homogeneous complete intersection of maximal grade and $J$ is minimally licci. If $D$ is a complete intersection of minimal type in $J$, then $\text{type}(C)\geq \text{type}(D)$. Thus the sequence of minimal links from $J$ to a complete intersection is a sequence of sequentially bounded links from $I$ to a complete intersection. The second statement is a direct consequence of Corollary \ref{g3 linear form}. 
\end{proof}

Now we recall the construction of the ideals by Huneke, Migliore, Nagel, and Ulrich that are homogeneously licci but not minimally licci. 

\begin{thm}[\cite{HMNU07}, Theorem 3.2] \label{HMNU}
Let $c_1, c_2, c_3, c_4$ be integers satisfying $4\leq c_1+3\leq c_2< c_3< c_4$, $c_1\neq 2$, and $c_2+c_3\leq \min \{2,c_1\} + c_4$. Choose homogeneous polynomials $f_1,f_4\in R$ of degrees $c_1, c_4$, respectively, and a linear form $\ell_1$ such that $f_1, f_4, \ell_1$ is a regular sequence. Define $I_1 = (\ell_1, f_1, f_4)$. Choose homogeneous polynomials $f_2, f_3\in I_1$ of degrees $c_2, c_3$, respectively, such that $f_2, f_3$ and $\ell_1, f_2$ form regular sequences, and let $\ell_2$ be a linear form such that $\ell_2, f_2, f_3$ is a regular sequence. Define $I=\ell_2\cdot I_1+(f_2, f_3)$. Then $I$ is homogeneously licci but not minimally licci. 
\end{thm}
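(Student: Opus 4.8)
The plan is to prove the two conclusions separately: that $I$ is homogeneously licci, via a short explicit chain of homogeneous links to a complete intersection; and that it is not minimally licci, by computing the minimal type of $I$, analyzing the associated minimal link, and showing the structural defect persists under further minimal linking.

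\emph{Homogeneously licci.} I would realize $I=\ell_2I_1+(f_2,f_3)$ as a basic double link of the complete intersection $I_1=(\ell_1,f_1,f_4)$. First pick a homogeneous $g\in I_1$ with $f_2,f_3,g$ a regular sequence; such a $g$ exists by graded prime avoidance, since $\operatorname{ht}(I_1)=3$ exceeds the height $2$ of each associated prime of the complete intersection $(f_2,f_3)$. Then $C_1:=(f_2,f_3,g)\subseteq I_1$ and $C_2:=(f_2,f_3,\ell_2g)\subseteq I$ are grade $3$ homogeneous complete intersections (for $C_2$, both $\ell_2$ and $g$ are nonzerodivisors modulo $(f_2,f_3)$, using that $\ell_2,f_2,f_3$ is a regular sequence). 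Since $\ell_2$ is a nonzerodivisor on $R/(f_2,f_3)$ one checks $C_2:\ell_2=C_1$, hence $C_2:\ell_2h=C_1:h$ for every $h$, and therefore
\[ C_2:I \;=\; C_2:\ell_2I_1 \;=\; \textstyle\bigcap_h (C_1:h) \;=\; C_1:I_1 . \]
So $I$ is directly linked by $C_2$ to $C_1:I_1$, which is directly linked by $C_1$ to the complete intersection $I_1$; all the regular sequences here are homogeneous, so $I$ is homogeneously licci. (It is standard that $I$, a basic double link of a perfect grade-$3$ ideal, is perfect of grade $3$; having five minimal generators it is not itself a complete intersection.)

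\emph{The minimal type of $I$.} Writing $I=(\ell_1\ell_2,\ \ell_2f_1,\ \ell_2f_4,\ f_2,\ f_3)$, the hypotheses $4\le c_1+3\le c_2<c_3<c_4$ make these minimal generators, of degrees ordered $2\le c_1+1<c_2<c_3<c_4+1$, so $\beta_1(R/I)=5$. Since $I$ contains no linear form, $g_1(I)=2$. The crucial observation is that $f_2,f_3$ have degree below $\deg f_4=c_4$, so they already lie in the sub-complete-intersection $(\ell_1,f_1)\subseteq I_1$; consequently every form of $I$ of degree $\le c_4$ lies in $(\ell_1,f_1)$, which has height $2$, while the only generator escaping $(\ell_1,f_1)$ is $\ell_2f_4$, of degree $c_4+1$. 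Hence no complete intersection contained in $I$ can have its top generator of degree $\le c_4$, i.e.\ $g_3(I)\ge c_4+1$; a short argument (using that $\ell_2,f_2,f_3$ is a regular sequence and that $I_1$ lies in no height-$2$ prime) produces a degree-$(c_4+1)$ form of $I$ completing $\ell_1\ell_2,f_2$ to a regular sequence, and forms of degree $<c_2$ all lie in $(\ell_2)$, so $g_2(I)=c_2$, realized by $f_2$. Thus the minimal type of $I$ is $(2,c_2,c_4+1)$.

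\emph{Not minimally licci, and the main obstacle.} Suppose $I$ were minimally licci. Then its first link is $I':=C:I$ for some complete intersection $C\subseteq I$ of the minimal type $(2,c_2,c_4+1)$, and $I'$ must again be minimally licci. I would record a minimal graded free resolution of $I$ (available from the basic double link / iterated mapping-cone construction), apply Ferrand's mapping cone (Theorem~\ref{Ferrand}) with the Koszul complex on $C$, decide which Koszul and ghost terms trim via Proposition~\ref{min Koszul relation} and Corollary~\ref{trims}, and read off the minimal type and Betti table of $I'$. The role of the hypothesis $c_2+c_3\le\min\{2,c_1\}+c_4$ is to guarantee — whatever minimal-type complete intersection $C$ is used — that $I'$ again belongs to a class with the same structural defect as $I$ (e.g.\ is again of the shape $(\text{linear form})\cdot(\text{complete intersection})+(\text{regular sequence})$ with parameters obeying the theorem's inequalities), so that no ideal reachable from $I$ by a sequence of minimal links has deviation $0$; in particular none is a complete intersection, contradicting minimal licci-ness. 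The main obstacle is exactly this last step: pinning down which ghost terms trim in the resolution of $I'$, checking that $c_2+c_3\le\min\{2,c_1\}+c_4$ is precisely the condition keeping $I'$ in the same class, and — since minimal links may repeat a complete-intersection type and $I$ contains a whole family of minimal-type complete intersections — verifying the conclusion is independent of all these choices. This is the careful Betti-table bookkeeping set up in Section~2; the hypothesis $c_1\ne2$ is the non-degeneracy condition on the degrees needed to keep the generator pattern (in particular the quadric $\ell_1\ell_2$) in the expected shape.
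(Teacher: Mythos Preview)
This theorem is not proved in the present paper; it is quoted from \cite{HMNU07}, and only a summary of the original argument appears here. That summary records the minimal type $(2,c_2,c_4+1)$ and the two-step chain
\[ I \widesim{(c_2,c_3,c_4+1)} I' \widesim{(c_2,c_3,c_4)} I_1 \]
to the complete intersection $I_1$. Your basic-double-link argument is this same construction: with $g\in I_1$ of degree $c_4$, your $C_2=(f_2,f_3,\ell_2 g)$ and $C_1=(f_2,f_3,g)$ have exactly these types, and the identity $C_2:I=C_1:I_1$ is the standard basic-double-link computation. So the homogeneously-licci half, and your determination of the minimal type, match the cited proof.

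For ``not minimally licci'' you have a plan, not a proof, as you yourself say. The strategy --- show that every minimal link of $I$ again fails to be minimally licci, so no chain of minimal links terminates at a complete intersection --- is the right one and is what \cite{HMNU07} does; the present paper invokes it inside the proof of Proposition~\ref{minimal link of I}. But your proposed mechanism, that the minimal link $I'$ is again literally of the form $\ell\cdot(\text{CI})+(\text{regular sequence})$ with new parameters $c_1',\dots,c_4'$ satisfying the theorem's hypotheses, is not how the argument runs and in fact fails: in the case the paper treats ($c_1=1$, $c_2=n$, $c_3=n+3$, $c_4=2n+3$) the minimal link $L$ has generator degrees $2,n,n,n+2,2n+4$ (resolution~(\ref{L res})), and no assignment of these to $2,c_1'+1,c_2',c_3',c_4'+1$ meets $c_1'+3\le c_2'<c_3'$. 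The actual argument tracks Betti tables rather than structural decompositions: a minimal link of an $I$-shaped ideal has the $L$-shaped resolution, a minimal link of an $L$-shaped ideal has the $I$-shaped resolution, and minimal linking oscillates between these two shapes without reaching a complete intersection. Carrying this out uniformly over \emph{all} complete intersections of minimal type is the real work you correctly flagged as the main obstacle.
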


Part of their proof shows that the CM type of these ideals is $2$, the minimal type of a complete intersection in $I$ is $(2, c_2, c_4+1)$, and that there is a sequence of links
\[ I \widesim{(c_2, c_3, c_4+1)} I' \widesim{(c_2, c_3, c_4)} I_1. \]
We remark that the drop from $c_4+1$ to $c_4$ in this double link is possible because $f_2$ and $f_3$ yield a minimal Koszul relation in $I$. 

\begin{prop} \label{t2 l1}
If $I$ is a homogeneous grade $3$ perfect $R$-ideal with $r(R/I)=2$ and $\lambda(I)=1$, then $I$ is SBL.
\end{prop}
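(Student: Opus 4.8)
\emph{Proof proposal.} The plan is to perform a single link from $I$, using the minimal Koszul relation supplied by $\lambda(I)=1$, and then to recognize the ideal produced as minimally licci, after which Proposition~\ref{direct link to ML} gives the conclusion at once. Precisely: since $\lambda(I)=1$, exactly as in the proof of Proposition~\ref{non licci} there is a homogeneous complete intersection $C=(g_1,g_2,g_3)\subseteq I$ in which $g_1,g_2$ are minimal generators of $I$ whose Koszul relation is a minimal first syzygy of $I$, and $g_3$ is a general element of $I$ of large degree (so that $g_3\in\mf m I$ and $g_1,g_2,g_3$ is still a regular sequence). Put $I_1=C:I$ and resolve it by Ferrand's mapping cone (Theorem~\ref{Ferrand}); before trimming, its generators come from a rank-$3$ Koszul piece and a piece of rank $r(R/I)=2$ dual to $F_3$, so there are $5$ of them. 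Because $g_3\in\mf m I$ is not a minimal generator of $I$, the only pair of generators of $C$ yielding a minimal Koszul relation in $I$ is $\{g_1,g_2\}$, so by Proposition~\ref{min Koszul relation}.ii there is exactly one trim affecting $\beta_1$; hence $\beta_1(R/I_1)=4$ and $d(I_1)=1$: $I_1$ is a grade $3$ almost complete intersection.

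The core of the proof is then the auxiliary fact that \emph{every grade $3$ almost complete intersection is minimally licci}. To establish it I would take a grade $3$ perfect ideal $I_1$ with $\beta_1(R/I_1)=4$ and choose a complete intersection $D=(h_1,h_2,h_3)\subseteq I_1$ of minimal type, built greedily: $h_i$ a general element of $I_1$ of degree $g_i(I_1)$ completing $h_1,\dots,h_{i-1}$ to a regular sequence (such $D$ exists since $\grade I_1=3$). The key claim is that $\{h_1,h_2,h_3\}$ extends to a minimal generating set of $I_1$, equivalently that the images $\bar h_1,\bar h_2,\bar h_3$ are $k$-linearly independent in $I_1/\mf m I_1$. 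Granting this, Proposition~\ref{min Koszul relation}.i produces three trims in the map $\partial_3$ of Ferrand's resolution of $L:=D:I_1$, and since $\beta_3(R/L)=\beta_1(R/I_1)=4$ before trimming we get $r(R/L)=1$, so $L$ is a grade $3$ Gorenstein ideal (or a complete intersection), hence minimally licci by \cite{MN08}. As $D$ has minimal type in $I_1$, prepending the link $I_1\widesim{D}L$ to a minimal linkage sequence of $L$ shows $I_1$ is minimally licci; then $I\widesim{C}I_1$ together with Proposition~\ref{direct link to ML} shows $I$ is sequentially bounded licci.

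The linear-independence claim is the step I expect to be the main obstacle. The argument I have in mind runs by contradiction: suppose that for a general choice the $i$-th generator has $\bar h_i$ in the subspace spanned by $\bar h_1,\dots,\bar h_{i-1}$. Containment in a fixed linear subspace is Zariski closed, and the elements of $(I_1)_{g_i(I_1)}$ that complete $h_1,\dots,h_{i-1}$ to a regular sequence form a nonempty (hence dense) open subset of $(I_1)_{g_i(I_1)}$, so the image of \emph{all} of $(I_1)_{g_i(I_1)}$ in $I_1/\mf m I_1$ would lie in the span of $\bar h_1,\dots,\bar h_{i-1}$. On the other hand, minimality of $g_i(I_1)$ forces every element of $I_1$ of degree strictly less than $g_i(I_1)$ into $\sqrt{(h_1,\dots,h_{i-1})}$, and since $(\mf m I_1)_{g_i(I_1)}$ is spanned by $\mf m$-multiples of generators of $I_1$ of degree $<g_i(I_1)$ we get $(\mf m I_1)_{g_i(I_1)}\subseteq\sqrt{(h_1,\dots,h_{i-1})}$. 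Combining these with the observation that those $\bar h_j$ of degree $g_i(I_1)$ lift to elements of $(h_1,\dots,h_{i-1})$, one deduces $(I_1)_{g_i(I_1)}\subseteq\sqrt{(h_1,\dots,h_{i-1})}$, contradicting that some element of that degree strictly raises the height. I would also check the degenerate possibilities ($d(I_1)=0$, $L$ a complete intersection, or the first link already reaching a complete intersection), but each of these only shortens the argument.
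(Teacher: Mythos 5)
Your proof takes the same route as the paper's: use the minimal Koszul relation supplied by $\lambda(I)=1$ to produce a direct link of deviation at most one, observe that a grade $3$ almost complete intersection is minimally linked to a Gorenstein ideal and hence minimally licci by \cite{MN08}, and conclude with Proposition~\ref{direct link to ML}. The only difference is that you spell out the step the paper compresses into the parenthetical ``as seen by Ferrand's mapping cone, for instance,'' namely that a minimal-type complete intersection inside a grade $3$ almost complete intersection can be chosen so that its generators extend to a minimal generating set; your genericity argument for this is sound (assuming an infinite base field, which the paper's conventions permit).
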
 

\begin{proof}
Let $C\subseteq I$ be a homogeneous complete intersection such that two of the minimal generators yield a minimal Koszul relation in $I$. By Proposition \ref{min Koszul relation}, $d(C:I)\leq 1$. Such ideals are minimally linked to Gorenstein ideals (as seen by Ferrand's mapping cone, for instance), and Gorenstein ideals are minimally licci (\cite{MN08}). Thus $I$ is SBL by Proposition \ref{direct link to ML}.
\end{proof}

If $I$ is an ideal from Theorem \ref{HMNU} and $L$ is a minimal link of $I$, then one possible sequence of links from $L$ to a complete intersection is
\[ L \widesim{(2, c_2, c_4+1)} I \widesim{(c_2, c_3, c_4+1)} I' \widesim{(c_2, c_3, c_4)} I_1 .\]
Since $c_2>2$, the sequence of links from $L$ to the complete intersection $I_1$ are not sequentially bounded. 
\begin{question} \label{diff links}
Is there a different sequence of links from $L$ to a complete intersection which would satisfy the sequentially bounded property?
\end{question}

This question reveals the complexity of trying to show an ideal is \textit{not} SBL. There are infinitely many complete intersection types in $L$ to choose as a starting point, and even once the type is chosen, there are possibly infinitely many complete intersections with that type. Thus giving any answer to the question must rely on some kind of specific structure of the ideal or its minimal graded free resolution. In this case, there is more structure to utilize.

\begin{prop} \label{Koszul in L}
Let $I$ be any ideal from Theorem \ref{HMNU}. If $L$ is any minimal link of $I$, then $r(R/L)=2$ and $\lambda(L)=1$. In particular, $L$ is SBL.
\end{prop}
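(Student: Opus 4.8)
The plan is to reduce everything, via Proposition~\ref{t2 l1}, to the two assertions $r(R/L)=2$ and $\lambda(L)=1$. Write $I=\ell_2 I_1+(f_2,f_3)=(\ell_1\ell_2,\ \ell_2 f_1,\ \ell_2 f_4,\ f_2,\ f_3)$; one checks that these five elements form a minimal generating set, of degrees $2,\,c_1+1,\,c_4+1,\,c_2,\,c_3$, so that $d(I)=2$ and the rank identity gives $\beta_2(R/I)=6$, while from (the proof of) Theorem~\ref{HMNU} we have $r(R/I)=2$, $\lambda(I)=1$, and the unique minimal Koszul relation of $I$ is the one on $f_2,f_3$, of internal degree $c_2+c_3$. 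Now set $L=C:I$ with $C=(q,g,h)$ a complete intersection of the minimal type $(2,c_2,c_4+1)$, so $\deg q=2$, $\deg g=c_2$, $\deg h=c_4+1$. By Theorem~\ref{Ferrand}, the mapping cone $\mc{G}^{*}_{\bigcdot}$ is a (not yet minimal) graded free resolution of $R/L$ with total Betti numbers $(5,9,5)$: its homological degree $3$ term is the shifted dual of the first free module of a minimal resolution of $I$, its homological degree $1$ Koszul summands are $R(-2)\oplus R(-c_2)\oplus R(-(c_4+1))$, and its homological degree $2$ Koszul summands are $R(-(c_2+c_4+1))\oplus R(-(c_4+3))\oplus R(-(c_2+2))$. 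By Corollary~\ref{trims}, proving the proposition amounts to pinning down the trims, i.e.\ the unit entries of $\partial_2$ and $\partial_3$.

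First I would show that the link uses no minimal Koszul relation of $I$: the three Koszul relations among the generators of $C$ have degrees $c_2+2$, $c_4+3$, and $c_2+c_4+1$, and since $2<c_2<c_3<c_4$ the first and third differ from $c_2+c_3$, while the hypothesis $c_2+c_3\le\min\{2,c_1\}+c_4\le c_4+2$ forces $c_4+3\ne c_2+c_3$. As $\lambda(I)=1$ with its minimal Koszul relation in degree $c_2+c_3$, Proposition~\ref{min Koszul relation}(ii) shows $\partial_2$ has no unit entries; hence there are no trims in homological degrees $1$–$2$, so $\beta_1(R/L)=5$ (that is, $d(L)=2$) and no generator of $C$ lies in $\mf{m}L$.

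Next — and this is the decisive point — I would show that $q$, $g$, and $h$ are each (weak associates of) minimal generators of $I$, so that by Proposition~\ref{min Koszul relation}(i) there are exactly three trims in homological degrees $2$–$3$ and therefore $r(R/L)=\beta_1(R/I)-3=5-3=2$. For $q$ this holds since $(\mf{m}I)_2=0$ and $q\neq 0$. For $g$: the degree-$2$ part of $I$ lies in $(\ell_2)$, so $q\in(\ell_2)$, whereas $(\mf{m}I)_{c_2}=\ell_2\,(\ell_1,f_1)_{c_2-1}\subseteq(\ell_2)$, so $g\in\mf{m}I$ would put both $q$ and $g$ in $(\ell_2)$, contradicting that they form a regular sequence. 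The case of $h$ is the heart of the matter: I would rule out $h\in\mf{m}I$ by using that $\ell_2 f_4$ is the only minimal generator of $I$ in degree $c_4+1$ and $\mf{m}\cdot\ell_2 f_4$ has no part in that degree, so that $h\in\mf{m}I$ forces the whole complete intersection $C$ into $I':=(\ell_1\ell_2,\ell_2 f_1,f_2,f_3)\subseteq I$; one then combines this containment with the regular-sequence property of $q,g,h$, the explicit form of $I$, and minimality of the type $(2,c_2,c_4+1)$ (which already forces, e.g., that $\ell_1,f_2,f_3$ is not a regular sequence, for otherwise $(\ell_1\ell_2,f_2,f_3)$ would be a complete intersection in $I$ of the strictly smaller type $(2,c_2,c_3)$) in order to reach a contradiction. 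I expect this to be the main obstacle, since a priori the top generator of a minimal-type complete intersection can lie in $\mf{m}I$, and excluding this is precisely where the particular structure of the Huneke--Migliore--Nagel--Ulrich ideals must be exploited.

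Granting $r(R/L)=2$, the three homological-degree-$2$ Koszul summands of $\mc{G}^{*}_{\bigcdot}$ all trim, against the homological-degree-$3$ summands dual to $q$, $g$, and $h$; thus a minimal resolution of $L$ has homological degree $2$ term the shifted dual of the second free module of a minimal resolution of $I$, and $L$ is minimally generated by five forms of degrees $2,c_2,c_4+1$ together with two ``ghost'' degrees coming from the dual of the third free module of $I$. To get $\lambda(L)=1$ I would then read the minimal first syzygies of $L$ off this resolution — they occur in degrees $a-\nu$ with $a=c_2+c_4+3$ and $\nu$ running over first-syzygy degrees of $I$ — and compare them with the degrees $d_i+d_j$ of Koszul relations among the generators of $L$; the same numerical inequalities used above leave room for at most one such Koszul relation to be a minimal syzygy, namely the degree-$(c_2+2)$ relation on the degree-$2$ and degree-$c_2$ generators, and one verifies that it is indeed minimal. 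Hence $\lambda(L)=1$, and Proposition~\ref{t2 l1} gives that $L$ is sequentially bounded licci.
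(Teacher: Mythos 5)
The proposal follows a genuinely different route from the paper. The paper constructs the minimal graded free resolution of $I$ explicitly (with differentials) via the Horseshoe Lemma applied to $0\to R/I_1(-1)\to R/I\to R/(\ell_2,f_2,f_3)\to 0$, then computes the connecting morphisms for a minimal-type inclusion $C\subseteq I$ and reads off the single Koszul column directly from the second differential of Ferrand's resolution of $L$. Your approach instead treats Ferrand's mapping cone abstractly and tries to deduce the trims from degree bookkeeping: the $\partial_2$-unit-freeness argument (all Koszul degrees of $C$ miss $c_2+c_3$ and $\lambda(I)=1$) and the weak-associate arguments for $q$ and $g$ (using $(\mf m I)_2=0$ and the $(\ell_2)$-containment) are clean and correct, and this is a nice conceptual rephrasing. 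However, two things go wrong.

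First, the decisive claim that $h\notin\mf m I$ is only sketched, and you acknowledge as much. But the sketch as written does not close. Granting that $h\in\mf m I$ forces $C\subseteq I':=\ell_2(\ell_1,f_1)+(f_2,f_3)$, observe that a minimal prime $p$ of $(\ell_1,f_2,f_3)$ of height $2$ (which exists since the type-minimality hypothesis shows $\ell_1,f_2,f_3$ is not a regular sequence) need \emph{not} contain $I'$, since $\ell_2 f_1\in p$ is not forced; so $\grade(I')=3$ is not ruled out, and $C\subseteq I'$ yields no contradiction by height alone. There is also a more delicate point your sketch overlooks: to rule out a unit entry in the corresponding row of $\partial_3$ you must show that \emph{no} choice of a degree-$(c_4+1)$ generator of $C$ is a weak associate of $\ell_2 f_4$ in $I$, i.e.\ that $C_{c_4+1}\subseteq(\mf m I)_{c_4+1}$ fails, not merely that the particular $h$ you started with might lie in $\mf m I$. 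This is a genuine gap, and it is precisely the step the paper's explicit computation of $u_\bullet$ is designed to avoid.

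Second, the identification of the unique minimal Koszul relation of $L$ as the degree-$(c_2+2)$ relation on the degree-$2$ and degree-$c_2$ generators is incorrect. After the three trims, the first-syzygy degrees of $L$ are $\{a-j : j\text{ a first-syzygy degree of }I\}$ with $a=c_2+c_4+3$, namely $\{1+c_2+c_4-c_1,\ 1+c_2,\ 2+c_2-c_1,\ 2+c_4,\ 2+c_2+c_4-c_3,\ 3+c_4-c_3\}$, and $c_2+2$ lies in this set only if $c_2+c_3=c_4+1$, which is not guaranteed (and in the paper's main parametrization $c_1=1,\ c_2=n,\ c_3=n+3,\ c_4=2n+3$ it fails, since then the first-syzygy degrees are $n+1,n+1,n+3,2n+2,2n+5,3n+3$, none equal to $n+2$). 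Indeed the paper itself (Proposition~\ref{minimal link of I}) locates the lone candidate Koszul relation of $L$ between the generators of degree $c_2$ and degree $2+c_4-c_3$, of total degree $c_2+2+c_4-c_3=a-(1+c_3)$, which \emph{is} a first-syzygy degree. So even granting $r(R/L)=2$, the $\lambda(L)=1$ step needs this correction and a real verification that the correct candidate is a minimal syzygy.
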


\begin{proof}
The ideal $I$ from Theorem \ref{HMNU} fits in to a short exact sequence
\[ 0 \to R/I_1(-1) \xrightarrow{\cdot \ell_2} R/I \to R/(\ell_2, f_2, f_3) \to 0 \]
(this statement is in the proof of Theorem \ref{HMNU}). The Horseshoe Lemma can be used to construct the graded free resolution of $I$, including the entries of the differential maps. Further, it is not hard to compute the connecting morphisms induced by an inclusion $C\subseteq I$, where $C$ is a complete intersection of minimal type. From these one obtains an explicit free resolution for $L$ with the differential maps, and the second differential map has precisely one column which corresponds to a Koszul relation. 
\end{proof}

In the construction of Theorem \ref{HMNU}, let $c_1=1$, choose $c_2=n>3$, and let $c_3=n+3$ and $c_4=2n+3$. As shown in the proof of Theorem \ref{HMNU}, the minimal graded free resolution of $I$ is
\begin{equation} \label{I res}
    0 \to \begin{matrix} R(-(2n+4)) \\ \oplus \\ R(-(2n+6)) \end{matrix} \to \begin{matrix} R(-3) \\ \oplus \\ R(-(n+1)) \\ \oplus \\ R(-(n+4)) \\ \oplus \\ R(-(2n+3)) \\ \oplus \\ R(-(2n+5))^2 \end{matrix} \to \begin{matrix} R(-2)^2 \\ \oplus \\ R(-n) \\ \oplus \\ R(-(n+3)) \\ \oplus \\ R(-(2n+4)) \end{matrix} \to R \to 0. 
\end{equation} 
By Lemma \ref{g1q1}, it follows that $g_2(I)=n$. Further, $g_3(I)=2n+4$ by Lemma \ref{max socle shift}. Hence the minimal type of $I$ is $(2,n,2n+4)$, and a direct link of $I$ using this complete intersection type yields an ideal with a minimal graded free resolution
\begin{equation} \label{L res}
0 \to \begin{matrix} R(-(2n+3)) \\ \oplus \\ R(-(3n+4)) \end{matrix} \to \begin{matrix} R(-(n+1))^2 \\ \oplus \\ R(-(n+3)) \\ \oplus \\ R(-(2n+2)) \\ \oplus \\ R(-(2n+5)) \\ \oplus \\ R(-(3n+3)) \end{matrix} \to \begin{matrix} R(-2) \\ \oplus \\ R(-n)^2 \\ \oplus \\ R(-(n+2)) \\ \oplus \\ R(-(2n+4)) \end{matrix} \to R \to 0. 
\end{equation}

\begin{defn} \label{min link res}
By $(\vardiamond)$ we denote the property of an $R$-ideal satisfying the following two conditions for a fixed $n\geq 4$: the ideal has grade $3$, and the Betti table matches the Betti table of an ideal with resolution (\ref{L res}).
\end{defn}

The proof of Theorem \ref{HMNU} in \cite{HMNU07} shows that any ideal satisfying $(\vardiamond)$ is not minimally licci, and we extend this result in Propositions \ref{minimal link of I} and \ref{minimal link of I trim} by showing that the quadratic form is an obstruction to the SBL property.

\par In the remainder of the proofs, one simple observation happens to be quite useful: 

\begin{defn} \label{Koszul argument}
The \textit{Koszul argument} is the observation that if $I$ is an $R$-ideal and $\{f,g\}$ is part of a minimal generating set of $I$ with $\deg f = a \leq b = \deg g$, then $\text{Syz}_1(R/I)_{\leq b} \neq \text{Syz}_1(R/I)_{\leq a+b}$. Indeed, the Koszul syzygy between $f$ and $g$ has degree $a+b$, and since $g$ is part of a minimal generating set, the Koszul syzygy cannot lie in $\text{Syz}_1(R/I_{< b})$, which contains $\text{Syz}_1(R/I)_{\leq b}$. 
\end{defn}


\begin{prop} \label{minimal link of I}
Fix $n\geq 4$ and let $L$ be an ideal satisfying $(\vardiamond)$. If $D\subseteq L$ is a complete intersection with type $(2,c,d)$, then the minimal type of the link $D:L$ is $(2,c,d)$.
\end{prop}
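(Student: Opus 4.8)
The plan is to use Ferrand's mapping cone (Theorem~\ref{Ferrand}) to write down a (possibly non-minimal) graded free resolution of $D:L$, then argue that after all forced trims the Betti table still forces $(2,c,d)$ to be the minimal type. First I would apply Ferrand's construction to $D = (g_1,g_2,g_3)$ with $\deg g_1 = 2$, $\deg g_2 = c$, $\deg g_3 = d$, and $a = 2+c+d$, using the resolution (\ref{L res}) of $L$ as $\mc{F}_{\bigcdot}$. Since $L$ satisfies $(\vardiamond)$, its Betti table is known explicitly, so the shifts appearing in $\mc{G}^*_{\bigcdot}$ are completely determined; in particular $R/(D:L)$ has a resolution of length~$3$ whose first free module is $\oplus_j R(-(a-j))^{\beta_{1,j}(R/L)}$ and whose second and third modules pick up the Koszul shifts $R(-(a-2))$, $R(-(a-c))$, $R(-(a-d))$ and $R(-2)$, $R(-c)$, $R(-d)$ respectively, on top of the duals of $F_2$ and $F_3$.

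Next I would pin down which of these terms must trim. By Proposition~\ref{min Koszul relation}, a unit entry in $\partial_3$ (equivalently a trim in step~$1$–$2$) occurs exactly when a subset of $\{g_1,g_2,g_3\}$ is part of a minimal generating set of $L$, and a unit entry in $\partial_2$ occurs exactly when a pair of the $g_i$ yields a minimal Koszul relation in $L$. The quadric $g_1$ of degree $2$ is necessarily (a weak associate of) the unique degree-$2$ minimal generator of $L$ visible in (\ref{L res}), so $g_1$ is part of a minimal generating set and the corresponding ghost terms trim. For $g_2, g_3$ the situation depends on whether they are minimal generators or lie in $\mf{m}L$; in either case Corollary~\ref{construct Koszul} and the Koszul argument (Definition~\ref{Koszul argument}) control exactly which Koszul relations among the $g_i$ are minimal, and hence which shifts survive. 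After bookkeeping, the minimal resolution of $D:L$ should have its three lowest first-module shifts at degrees $2$, $c$, $d$ (the quadric persists by the short exact sequence / socle considerations, and the shifts $R(-c)$, $R(-d)$ survive as genuine minimal generators of $D:L$ because the corresponding Koszul syzygies in $L$ are \emph{not} minimal when $g_2, g_3 \in \mf{m}L$, or, when they are minimal generators of $L$, the mapping-cone ghost terms in step~$1$ persist). Then I would invoke Lemma~\ref{g1q1} and Lemma~\ref{max socle shift} (exactly as in the computation preceding Definition~\ref{min link res}) to read off from this Betti table that $g_1(D:L)=2$, $g_2(D:L)=c$, and $g_3(D:L)=d$.

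The main obstacle is the middle grade-jump $g_2(D:L) = c$: I need to rule out the existence in $D:L$ of a complete intersection of type $(2, c', d')$ with $c' < c$. This is not visible from the first free module alone — one has to use the full Betti table together with a result like Lemma~\ref{g1q1} that translates "smallest degree of a nonzerodivisor regular element modulo the quadric" into Betti-number data. The delicate point is that the ghost terms $R(-c)$, $R(-d)$ in the mapping cone may or may not trim depending on the internal structure of $D$ inside $L$ (minimal generator vs.\ element of $\mf{m}L$), so the argument must handle both cases and show the \emph{minimal} type is $(2,c,d)$ regardless — this is presumably where the "technical arguments about when ghost terms of double links trim" of Appendix~B get used. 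A secondary subtlety is ensuring $d$ is genuinely the grade-$3$ jump and not something smaller, which again follows from a socle-degree computation (Lemma~\ref{max socle shift}) applied to the resolution of $D:L$ produced above.
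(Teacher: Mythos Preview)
Your proposal has a genuine gap at the core step. You claim that ``after bookkeeping, the minimal resolution of $D:L$ should have its three lowest first-module shifts at degrees $2$, $c$, $d$,'' but this is false. Writing out Ferrand's mapping cone with the $(\vardiamond)$ resolution, the first free module of $R/(D:L)$ contains the summand $R(-(c+d-3n-2))$ coming from the socle shift $R(-(3n+4))$ of $L$, and since $d$ can be as small as $2n+4$ this degree can lie strictly between $2$ and $c$. So there \emph{is} a minimal generator of $D:L$ of degree smaller than $c$, and the minimal type cannot be read off from the lowest shifts. The actual content of the proof is explaining why this extra low-degree generator does not lower $g_2$: the paper shows that the degree $c+d-3n-1$ syzygy is minimal and forces a non-Koszul linear relation between the quadric and this low-degree generator, so they cannot both sit in a regular sequence. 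Your invocation of Lemma~\ref{g1q1} does not do this; that lemma only handles a very restricted syzygy pattern and does not apply to the general $P_{\bigcdot}$ here.

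The situation for $g_3(D:L)=d$ is worse. You say this ``follows from a socle-degree computation (Lemma~\ref{max socle shift}),'' but that lemma only gives a lower bound on the sum of the type degrees, not an upper bound on $g_3$. In the paper's proof this is Claim~2, and it is by far the longest part: one assumes $g_3(D:L)<d$, splits into the two possible values $c+d-3n-2$ and $c+d-2n-1$, and in each case constructs one or two \emph{further} links $L', L_2, L_3$, squeezing out numerical inequalities until a contradiction with Lemma~\ref{max degree relations} or Lemma~\ref{max socle shift} appears (in one branch forcing $c=2n$ and a spurious complete intersection of type $(1,n-1,2n+3)$). None of this link-chasing is present in your outline, and it is not replaceable by trim bookkeeping or Appendix~B --- Appendix~B handles the separate Proposition~\ref{minimal link of I trim} about double links, not this single-link statement.
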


\begin{proof}
First note that $\lambda(L)\leq 1$, the only possible minimal Koszul relation in $L$ being the Koszul relation between the minimal generators of degree $n$ and $n+2$ (in fact, although the proof is omitted, one can show this must be a minimal Koszul relation). Thus no direct link of $L$ is a complete intersection. Let $D\subseteq L$ be a complete intersection of type $(2,c,d)$. Then $L_1=D:L$ has a free resolution $\mc{P}_{\bigcdot}$ with terms
\begin{align} \notag
P_1 &= R(-2) \oplus R(-c) \oplus R(-d) \oplus R(-(c+d-3n-2)) \oplus R(-(c+d-2n-1)) \\ \notag
P_2 &= R(-(c+d-3n-1)) \oplus R(-(c+d-2n-3)) \oplus R(-(c+d-2n)) \\ \notag 
&\oplus R(-(c+d-n-1)) \oplus R(-(c+d-n+1))^2 \oplus R(-(c+2)) \oplus R(-(d+2)) \\ \notag
P_3 &= R(-(c+d-2n-2)) \oplus R(-(c+d-n)) \oplus R(-(c+d-n+2))^2 
\end{align}
Now $g_2(L)\geq n$ and $g_3(L)=2n+4$, the latter a consequence of Lemma \ref{max socle shift}. Therefore, $c+d\geq 3n+4$, with equality if and only if $c=n$ and $d=3n+4$, in which case $\mc{P}_{\bigcdot}$ has the same terms as resolution (\ref{I res}). In this case, the proof of Theorem \ref{HMNU} in \cite{HMNU07} shows that $D$ has minimal type in $L_1$ and any link of $L_1$ with respect to $\text{type}(D)$ reproduces an ideal with the same free resolution as $L$. Thus we may exclude this case from the rest of the proof. However, in any case, we show that $D$ has minimal type in $L_1$ by proving the following two claims. \\


\noindent \textbf{Claim 1:} $g_2(L_1)=c$. \\

\noindent Since $d>2n+1$, adding $c$ to both sides we obtain $c+d-2n-1>c$. Notice there is a syzygy in $L_1$ of degree $c+d-3n-1$, and $c+d-3n-1 \leq c$ with equality only if $d=3n+1$. In particular, if this syzygy is not minimal, then $R(-(c+d-3n-1))$ in $P_2$ trims with $R(-c)$ in $P_1$. In this case, by Proposition \ref{min Koszul relation}, the quadric and degree $3n+1$ generator of $D$ yield a minimal Koszul relation in $L$. However, this is impossible because $\beta_{1,3n+1}(R/L)=0$. Hence the syzygy of degree $c+d-3n-1$ is minimal, and it can only describe a nontrivial, non-Koszul syzygy between the quadratic generator and degree $c+d-3n-2$ generator of $L_1$. Thus $g_2(L_1) > c+d-3n-1$, which means $g_2(L_1)=c$. \\


\noindent \textbf{Claim 2:} $g_3(L_1)=d$. \\

\noindent If $g_3(L_1)<d$, then either $g_3(L_1)=c+d-3n-2<d$ or $g_3(L_1)=c+d-2n-1 < d$. 
\par Assume $g_3(L_1)=c+d-3n-2<d$. Since $g_2(L_1)=c$, we then have $c\leq c+d-3n-2 \leq d-1$. Also, by Lemma \ref{max socle shift}, since $L_1$ is not a complete intersection, $2+2c+d-3n-2 \geq c+d-n+3$. These inequalities imply $d\geq 3n+2$ and $2n+3\leq c\leq  3n+1$. Since $\beta_{1,j}(R/L)=0$ for $j\geq 3n+2$, the generator of degree $d$ in $D$ is not a minimal generator of $L$. Thus, by Corollary \ref{construct Koszul}, there is a complete intersection in $L_1$ of type $(2,c,c+d-3n-2)$ such that the quadric and degree $c$ generators yield a minimal Koszul relation in $L_1$. Let $L'$ be a link of $L_1$ with respect to such a complete intersection. Using Ferrand's resolution of $L'$ and the fact that $c\leq 3n+1$, it is easily checked that $\beta_{1,j}(R/L')=0$ for $j>c$, and $\{j: j>c \text{ and } \beta_{2,j}(R/L')\neq 0 \}$ has one element. This contradicts Lemma \ref{max degree relations}.
\par Now assume $g_3(L_1)=c+d-2n-1 < d$. Since $g_2(L_1)=c$, Lemma \ref{max socle shift} yields inequalities $c\leq c+d-2n-1 \leq d-1$ and $2+2c+d-2n-1 \geq c+d-n+3$. Simplifying, we have
\begin{equation} \label{case 2}
   n+2\leq c\leq  2n  \hspace{5pt} \text{ and } \hspace{5pt} d\geq 2n+1.
\end{equation}
Now, there is a complete intersection of type $(2,c,c+d-2n-1)$ in $L_1$ such that the quadric and degree $c$ generators are weak associates of the generators of the same degree in $D$. Hence by Lemma \ref{ci shift trims}.ii, the link $L_2$ of $L_1$ with respect to this complete intersection has a free resolution $\mc{P}_{\bigcdot}'$ with terms
\begin{align} \notag
P_1' &= R(-2) \oplus R(-(c-n-1))^2 \oplus R(-(c-n+1)) \oplus R(-c) \oplus R(-(c+3)) \\ \notag
P_2' &= R(-(c-n))^2 \oplus R(-(c-n+2)) \oplus R(-(c+1)) \\ \notag
&\oplus R(-(c+4)) \oplus R(-(c+n+2)) \oplus R(-(2c-2n-1)) \\ \notag 
P_3' &= R(-(2c-2n+1)) \oplus R(-(c+n+3)) 
\end{align}
It is easily checked that the conditions of Lemma \ref{g1q1}.ii are met to deduce $g_2(L_2)=c-n-1$. Therefore, $L_2$ contains a complete intersection of type $(2,c-n-1,c+3)$. Applying Lemma \ref{max socle shift} to this type we have $c\geq 2n$, so by (\ref{case 2}), we have $c=2n$. Let $L_3$ be a link of $L_2$ with respect to $(2,c-n-1,c+3)=(2,n-1,2n+3)$. Then $L_3$ has a free resolution $\mc{P}_{\bigcdot}''$ with terms
\begin{align} \notag
P_1'' &= R(-1) \oplus R(-2) \oplus R(-(n-1)) \oplus R(-(n+3)) \oplus R(-(2n+3)) \\ \notag
P_2'' &= R(-2)  \oplus R(-n) \oplus R(-(n+3)) \oplus R(-(n+5)) \\ \notag
&\oplus R(-(2n+2)) \oplus R(-(2n+4))^2 \\ \notag
P_3'' &= R(-(n+4)) \oplus R(-(2n+3)) \oplus R(-(2n+5)) 
\end{align}
By Lemma \ref{linear Koszul}, the terms $R(-2)$ and $R(-(n+3))$ in steps 1 and 2 trim, implying $L_3$ is a complete intersection of type $(1, n-1, 2n+3)$, which contradicts Lemma \ref{max socle shift} since $n\geq 4$. This contradiction finishes the proof of Claim 2. 
\end{proof}

Using that $(2,c,d)$ is the minimal type in $(2,c,d)$, we show in Proposition $\ref{minimal link of I trim}$ that any sequentially bounded double link of $L$ with respect to $(2,c,d)$ satisfies $(\vardiamond)$. Now we turn to the construction of our main class of ideals. 


\subsection{Construction of non-SBL Ideals} Returning to the ideals of Theorem \ref{HMNU}, instead of constructing a minimal link as above, we consider a direct link constructed by a complete intersection that forces exactly one minimal Koszul relation. 
\par Let $I$ be an ideal whose resolution is given by (\ref{I res}). Recall that $g_2(I)=n$, and let $D\subseteq I$ be a complete intersection with type $(2,n+1,2n+4)$. Notice that the degree $n+1$ generator must lie in $\mf{m}I$, so by Corollary \ref{construct Koszul}, the link $D:I$ contains a minimal Koszul relation between the quadric and degree $2n+4$ generators of $D$. Further, the link $D:I$ has a minimal graded free resolution
\begin{equation} \label{J res}
    0 \to \begin{matrix} R(-(2n+4)) \\ \oplus \\ R(-(2n+7)) \\ \oplus \\ R(-(3n+5)) \end{matrix} \to \begin{matrix} R(-(n+2))^2 \\ \oplus \\ R(-(n+4)) \\ \oplus \\ R(-(2n+3)) \\ \oplus \\ R(-(2n+6))^2 \\ \oplus \\ R(-(3n+4)) \end{matrix} \to \begin{matrix} R(-2) \\ \oplus \\ R(-(n+1))^2 \\ \oplus \\ R(-(n+3)) \\ \oplus \\ R(-(2n+4)) \end{matrix} \to R \to 0.
\end{equation}

\begin{defn} \label{main ideals}
By $(\star)$ we denote the property of an $R$-ideal satisfying the following two conditions for a fixed $n\geq 4$: the ideal has grade $3$, and the Betti table matches the Betti table of an ideal with resolution (\ref{J res}).
\end{defn}

Condition $(\star)$ yields the class of ideals that we claim are homogeneously licci but not SBL. Now we prove they are homogeneously licci, but to show they are not SBL, we need the lemmas and propositions developed in the rest of this section, the next section, and Appendix B. 


\begin{thm}
\label{thm:main}
Fix $n\geq 4$. Then any ideal satisfying $(\star)$ is (i.) homogeneously licci but (ii.) not SBL.
\end{thm}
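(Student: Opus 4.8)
For part (i), the plan is to exhibit an explicit homogeneous linkage ladder down to a complete intersection. By construction, the ideals satisfying $(\star)$ arise as a direct link $D:I$ of an ideal $I$ with resolution (\ref{I res}), where $D$ has type $(2,n+1,2n+4)$; and $I$ itself is homogeneously licci because it belongs to the family of Theorem \ref{HMNU} (with the parameters $c_1=1$, $c_2=n$, $c_3=n+3$, $c_4=2n+3$). So the linkage $J \widesim{(2,n+1,2n+4)} I \widesim{\cdots} (\text{c.i.})$ is already a chain of homogeneous links. The only subtlety is that an arbitrary ideal satisfying $(\star)$ is specified by its Betti table, not by being literally of the form $D:I$; but Ferrand's mapping cone (Theorem \ref{Ferrand}) together with Corollary \ref{trims} lets us reverse-engineer a quadric-free-of-the-Koszul-relation complete intersection inside any $(\star)$ ideal whose link has the resolution (\ref{I res}), since the presence of the minimal Koszul relation between the degree $2$ and degree $2n+4$ generators is forced by the shape of the Betti table (the term $R(-(2n+6))$ appearing in homological degrees $2$). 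Running the mapping cone backwards through this Koszul relation lands on an ideal satisfying $(\vardiamond)$ (resolution (\ref{L res})), which is homogeneously licci as a minimal link of a Theorem \ref{HMNU} ideal, and we are done.

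For part (ii), the strategy is the one outlined in the introduction: rule out \emph{every} potential sequentially bounded linkage ladder by controlling Betti tables under double links. First, I would compute the minimal type of a $(\star)$ ideal $J$: using Lemma \ref{g1q1} one gets $g_2(J)=n+1$, and Lemma \ref{max socle shift} gives $g_3(J)=2n+4$, so the minimal type is $(2,n+1,2n+4)$ and $g_1(J)=2$ because of the quadric generator. Any SBL ladder must therefore begin with a complete intersection $C_0$ of type $\le (2,n+1,2n+4)$; since the quadric is a minimal generator and $2$ is the smallest possible first entry, every $C_0$ in an SBL ladder has type exactly $(2,c,d)$ with $c \le n+1$, $d \le 2n+4$. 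The heart of the argument is to show that after any sequence of sequentially bounded links, one never reaches a complete intersection. The key inputs are Proposition \ref{minimal link of I} and the promised Proposition \ref{minimal link of I trim}, which say (for the $(\vardiamond)$ family) that a sequentially bounded double link with a repeated type $(2,c,d)$ reproduces the same Betti table — the ghost terms trim via Lemma \ref{ci shift trims} — so that no progress toward a complete intersection is ever made unless one strictly decreases a type entry; and the numerics of the Betti table (via Lemma \ref{max socle shift} and Lemma \ref{max degree relations}) forbid the entries from dropping far enough, because decreasing $d$ below $2n+4$ forces, through Corollary \ref{construct Koszul}, a minimal Koszul relation in a degree where the Betti table has a zero, a contradiction.

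Concretely, I would organize part (ii) as: (1) establish the minimal type $(2,n+1,2n+4)$ and $\lambda(J)=1$, with the unique minimal Koszul relation between the degree-$2$ and degree-$(2n+4)$ generators; (2) observe that using the quadric in a complete intersection and hitting that Koszul relation sends $J$ to a $(\vardiamond)$ ideal, and that \emph{not} using the quadric (or using it but avoiding the Koszul relation) produces a ghost double link which, by the analysis of Appendix B on when ghost terms trim, again returns to the $(\star)$ or $(\vardiamond)$ Betti table; (3) run an induction on $(\vardiamond)$ and $(\star)$ ideals: by Proposition \ref{minimal link of I} the minimal type of a link $D:L$ of a $(\vardiamond)$ ideal with $\mathrm{type}(D)=(2,c,d)$ is again $(2,c,d)$, so the only way a sequentially bounded ladder can terminate is to strictly decrease some type entry at some step, and each such case is driven to a contradiction against Lemma \ref{max socle shift} / Lemma \ref{max degree relations} exactly as in the proof of Proposition \ref{minimal link of I}; (4) conclude that no sequentially bounded ladder from $J$ reaches a complete intersection, i.e. $J$ is not SBL. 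The main obstacle is step (3): one must show the casework of type-decreases is genuinely exhaustive — in particular handling repeated types (which do not violate the SBL condition and so could in principle be used arbitrarily many times to slowly reshape the ideal), and this is precisely where the ghost-double-link machinery of Definition \ref{ghost def}, Lemma \ref{ci shift trims}, and Appendix B does the work of showing repeated types are inert. Once repeated types are known to be inert and each strict decrease is contradictory, the result follows.
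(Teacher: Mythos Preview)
Your plan for part (i) is essentially correct in spirit (link $J$ back to something known to be licci), but note that linking $J$ by a complete intersection of type $(2,n+1,2n+4)$ produces an ideal with resolution (\ref{I res}), not $(\vardiamond)$ (resolution (\ref{L res})); these are different Betti tables. The paper carries out exactly this link and then verifies the resulting ideal has $r=2$ and $\lambda=1$, so Proposition \ref{t2 l1} applies. Your alternate route via the degree-$2$/degree-$(2n+4)$ Koszul relation would also work, but lands you in the setting of Lemma \ref{Koszul trim}, which requires a third generator of degree $b\geq 2n+6$ and two links rather than one.

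Your plan for part (ii) contains a genuine error that breaks the argument. You write that any SBL ladder must begin with $C_0$ of type $\leq (2,n+1,2n+4)$, hence of type $(2,c,d)$ with $c\leq n+1$ and $d\leq 2n+4$. The inequality goes the other way: the minimal type $(2,n+1,2n+4)$ is the \emph{smallest} type of any grade-$3$ complete intersection inside $J$, so every $C_0\subseteq J$ has type $\geq (2,n+1,2n+4)$, and the SBL condition places no upper bound on $\text{type}(C_0)$. In particular, the first entry $a_1$ need not be $2$; the paper devotes Propositions \ref{without quadratic} and \ref{big a1 trim} entirely to the case $a_1>2$, showing that sequentially bounded double links then return to $(\star)$. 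Your outline has no mechanism for this case.

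Even in the case $a_1=2$, your casework is incomplete. The paper needs the auxiliary class $(\star\star)$ (Definition \ref{star star}) precisely because when $a_1=2$ and $a_3\leq 2n+5$, the double link need not be a ghost double link and can drift outside both $(\star)$ and $(\vardiamond)$; Proposition \ref{ghost summands} together with Lemmas \ref{ghost g2}, \ref{ghost g3} and Propositions \ref{H a drop}, \ref{H b drop} control this drift. Your step (3) invokes only $(\vardiamond)$ and $(\star)$ and asserts that repeated types are ``inert,'' but the machinery you cite (Lemma \ref{ci shift trims}, Appendix B) establishes inertness only after substantial casework tracking where the non-weak-associate generators of the second complete intersection can sit, and that casework is what forces the introduction of $(\star\star)$. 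Without it, the induction in your step (3) does not close.
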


\begin{proof}[Proof of i.]
Let $J$ be an ideal satisfying $(\star)$. First, $g_2(J)=n+1$ by Lemma \ref{g1q1} and $g_3(J)=2n+4$ by Lemma \ref{max socle shift}. Let $C$ be a complete intersection in $J$ of type $(2,n+1,2n+4)$ and set $I'=C:I$. Then $I'$ has a free resolution with the same terms as (\ref{I res}), except possibly with the extra term $R(-(n+1))$ in steps $1$ and $2$. However, these trim as a result of the Koszul argument applied to the quadric and degree $n+1$ generator of $I'$. 
\par Next, consider any choice of minimal generators of $I'$ with degrees $n$ and $n+3$. We claim these yield a minimal Koszul relation in $I'$. Indeed, if not, then all of the Koszul relations between the degree $n+3$ generator and the minimal generators of $I'$ with lower degree lie in $\text{Syz}_1(R/I')_{\leq n+4}$. However, $\beta_{2,n+4}(R/I')=1$, so in this case, by a similar argument to the proof of Lemma \ref{max degree relations}, we have $\grade((I')_{\leq n})=1$. Since $g_2(I')=n$, this proves our claim. Therefore, $\lambda(I')=1$ and $r(R/I')=2$, which implies $I'$, and therefore $J$, is homogeneously licci. 
\end{proof} 

In Section 2, we remarked that any ghost double link of an ideal satisfying $(\star)$ has the same graded resolution, up to certain ghost terms. The following proposition is concerned with a sequence of double links that use the same complete intersection type at each step, but such that we cannot determine if any are ghost double links. The difference is in whether or not generators yielding minimal Koszul relations are being used to construct the links. For ideals satisfying $(\star)$, this situation only occurs if the quadric is used in the first link. The result is our third class of ideals. Although we do not know if there are any ideals in this class that are SBL, we show that the quadratic form is, as in Proposition \ref{minimal link of I}, an obstruction to the SBL property, as long as we restrict the highest generating degree of the first regular sequence. 

\begin{defn} \label{star star}
Fix $n\geq 4$. Suppose $J$ is an ideal satisfying $(\star)$. Then an ideal $H$ satisfies $(\star \star)$ if $\grade(H)=3$ and 
\[ \beta_{i,j}(R/H) = \left \{ \begin{array}{lr} s; & 1\leq i\leq 2, j=2n+2 \\ t; & 1\leq i \leq 2, j=2n+5 \\ \beta_{i,j}(R/J); & \text{else} \end{array} \right. \]
for some nonnegative integers $s$ and $t$.
\end{defn}

\begin{rem} \label{artinian rem}
For the remainder of the paper we assume $N=3$, without the loss of generality. Indeed, if $N>3$, then to obtain the result we could extend to an infinite base field and study the Artinian reduction of the ideals via linear forms. 
\end{rem}


\begin{prop} \label{ghost summands}
Fix $n\geq 4$ and let $H$ be an ideal satisfying $(\star \star)$. With $b\leq 2n+5$, any sequentially bounded double link of $H$ with respect to $(2, a, b)$ satisfies either $(\star \star)$ or $(\vardiamond)$. 
\end{prop}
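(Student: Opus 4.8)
The plan is to run Ferrand's mapping cone twice, as in Definition~\ref{ghost def}, and carefully track the ghost terms $R(-2)$, $R(-a)$, $R(-b)$ that the double link introduces in homological steps $1$ and $2$, using Lemma~\ref{ci shift trims} and the structural facts about $\lambda$ for ideals satisfying $(\star\star)$. Write the sequentially bounded double link as $H\widesim{C}H_1\widesim{D}H_2$ with $\mathrm{type}(C)=(2,a,b)$ and $\mathrm{type}(D)\leq(2,a,b)$. Since $H$ satisfies $(\star\star)$, its only possible minimal Koszul relation is the one between the degree $n+2$ generators (the analogue of the $\lambda\leq 1$ observation at the start of Proposition~\ref{minimal link of I}); so $\lambda(H)\leq 1$ and no direct link of $H$ is a complete intersection. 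First I would split into the two cases dictated by whether the generator of degree $a$ in $C$ is a minimal generator of $H$ or lies in $\mathfrak mH$. In the first case, Lemma~\ref{ci shift trims}.i lets the $R(-a)$ ghost terms in steps $1$ and $2$ trim, so only the $R(-2)$ and $R(-b)$ ghost terms can survive; one then checks against the resolution~(\ref{J res}) that surviving ghost terms land exactly in the rows $j=2n+2$ and $j=2n+5$ predicted by $(\star\star)$ (for appropriate $b$), or else are forced to trim, yielding either $(\star\star)$ or $(\vardiamond)$.

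The genuinely delicate case is when the degree-$a$ generator of $C$ lies in $\mathfrak mH$: then by Corollary~\ref{construct Koszul} the quadric and degree-$b$ generator of $C$ yield a minimal Koszul relation in $H_1$, so $d(H_1)<r(R/H)$ and the total Betti numbers genuinely drop. Here I would combine Lemma~\ref{ci shift trims}.ii (to trim the $R(-a)$ ghosts, whenever $D$ can be chosen with weak-associate generators in the two remaining degrees) with the constraint $b\leq 2n+5$. The bound $b\leq 2n+5$ is exactly what forces $a\leq$ something manageable: since $a\leq b\leq 2n+5$ and $a$ must be at least $g_2(H_1)$, which is controlled by Lemma~\ref{g1q1}, there are only finitely many $(a,b)$ to analyze, and in each the ghost terms $R(-2),R(-a),R(-b)$ in steps $1,2$ either trim via the Koszul argument (Definition~\ref{Koszul argument}) applied to the quadric together with a generator of matching degree, or they persist precisely in the rows $j=2n+2,2n+5$, which is the $(\star\star)$ conclusion. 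The degenerate subcase where all three ghosts trim and the grade-$3$ jumps collapse to $(c_2,c_3,c_4)$-type data gives the $(\vardiamond)$ alternative, reproducing the dichotomy.

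The main obstacle will be the bookkeeping in the $\mathfrak mH$ subcase: the intermediate ideal $H_1$ has up to three new ghost terms of its own from the first mapping cone, and when the second cone is applied the shifted ghost degrees $b-2$, $b-a$, etc., can in principle collide with honest Betti numbers of $H$ in degrees $2n+2$ or $2n+5$, which is exactly why those two rows are allowed to have arbitrary multiplicities $s,t$ in $(\star\star)$. So the argument is not that the ghosts always trim, but that whatever survives is confined to those two rows; ruling out ghost terms in any \emph{other} row is the crux, and it is precisely there that $b\leq 2n+5$, Lemma~\ref{max socle shift} (to pin $g_3$), and Lemma~\ref{max degree relations} (to control the top nonzero rows of steps $1$ and $2$) must be brought to bear together, much as in the two-claim structure of the proof of Proposition~\ref{minimal link of I}. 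I expect the proof to mirror that earlier argument closely, with the extra generality in $a$ and $b$ handled by the same Ferrand-resolution computations followed by case elimination against Appendix~B's criteria for when ghost terms of double links trim.
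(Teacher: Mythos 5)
Your proposal assembles the right toolbox (Ferrand's mapping cone run twice, Lemma~\ref{ci shift trims}, Corollary~\ref{construct Koszul}, the Koszul argument, Lemmas~\ref{max socle shift} and~\ref{max degree relations}, and the Appendix~B trimming results) and correctly identifies that the heart of the matter is confining surviving ghost terms to the rows $j=2n+2$ and $j=2n+5$. The paper's proof opens, as yours should, by pinning down $g_2(H)=n+1$ and $g_3(H)\geq 2n+4$ via Lemmas~\ref{g1q1} and~\ref{max socle shift}, which forces $a\geq n+1$ and $2n+4\leq b\leq 2n+5$; you take $b\leq 2n+5$ as given but do not derive $b\geq 2n+4$, which is needed to make the later case counts finite.

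The genuine gap, however, is in your choice of case split. You branch on whether the degree-$a$ generator of $C$ is a minimal generator of $H$ or lies in $\mathfrak{m}H$. This is not the productive dichotomy: it only governs which ghost terms of $H_2$ might trim \emph{after} you already know the second regular sequence $D$ can be chosen of type $(2,a,b)$, i.e.\ after you know this type exists in $H_1$. What the proposition actually requires, and what your proposal never engages with, is a determination of the \emph{minimal type of} $H_1$. Until you know $g_2(H_1)$ and $g_3(H_1)$ you do not know which $D$ with $\mathrm{type}(D)\leq(2,a,b)$ even exist, and in particular you cannot rule out that $\mathrm{type}(D)$ is strictly smaller than $(2,a,b)$, in which case $H_2$ is \emph{not} a ghost double link and the bookkeeping you propose does not apply. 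The paper devotes two substantial lemmas (\ref{ghost g2} and~\ref{ghost g3}) to showing that either $g_2(H_1)=a$ and $g_3(H_1)=b$ (the ghost case), or there is a drop by exactly one in one coordinate with the specific values $b=2n+4$ (when $a$ drops) or $a=2n+4,\,b=2n+5$ (when $b$ drops); the drop cases are then handed to Propositions~\ref{H a drop} and~\ref{H b drop} to get the $(\vardiamond)$ conclusion. Each of these lemmas is itself a multi-link contradiction argument (constructing $H'$, $H''$ and invoking Lemma~\ref{max socle shift}, Lemma~\ref{g12l}, projective-dimension constraints, etc.), not a routine check, so gesturing at ``finitely many $(a,b)$ to analyze'' undersells the work. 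Your ``degenerate subcase where all three ghosts trim'' is a vague stand-in for this machinery; as written it conflates ghost-term trimming (a statement about a known double link) with type-dropping (a statement about the structure of $H_1$ that must be established first). Reorganize around the minimal type of $H_1$, prove the analogues of Lemmas~\ref{ghost g2} and~\ref{ghost g3}, and only then run your $\mathrm{ci\ shift\ trims}$ bookkeeping on the repeated-type case.
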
 

\begin{proof}
First, $H$ cannot be a complete intersection since $r(R/H)=3$. Thus, by Lemmas \ref{g1q1} and \ref{max socle shift}, the grade jumps are $g_2(H)=n+1$ and $g_3(H)\geq 2n+4$. In particular, $a\geq n+1$ and $2n+4\leq b\leq 2n+5$. Let $C\subseteq H$ be a complete intersection of type $(2,a,b)$ and let $H_1=C:H$. Then $H_1$ has a free resolution $\mc{G}_{\bigcdot}$ with terms
\begin{align} \notag
G_1 &=  R(-2) \oplus R(-a) \oplus R(-b) \oplus R(-(a+b-3n-3)) \\ \notag
&\oplus R(-(a+b-2n-5)) \oplus R(-(a+b-2n-2)) \\ \notag 
G_2 &= R(-(a+b-3n-2)) \oplus R(-(a+b-2n-4))^2 \oplus R(-(a+b-2n-1)) \\ \label{H1 res}
&\oplus R(-(a+b-n-2))\oplus R(-(a+b-n))^2 \oplus R(-(a+2)) \\ \notag
&\oplus R(-(b+2)) \oplus R(-(a+b-2n-3))^t \oplus R(-(a+b-2n))^s \\ \notag  
G_3 &= R(-(a+b-2n-2)) \oplus R(-(a+b-n-1)) \oplus R(-(a+b-n+1))^2 \\ \notag 
&\oplus R(-(a+b-2n-3))^t \oplus R(-(a+b-2n))^s. 
\end{align}

Since $r(R/H)=3$ and $\lambda(H)\leq 1$, this link cannot be a complete intersection. Now the majority of the proof depends on the next two lemmas and Propositions \ref{H a drop} and \ref{H b drop}. The lemmas show that there are only three complete intersection types in $H_1$ that do not exceed $(2,a,b)$. The first is $(2,a-1,b)$, in which case $b=2n+4$ and this is the minimal type in $H_1$. The second is $(2,a,b-1)$, in which case $a=2n+4$, $b=2n+5$, and this is the minimal type in $H_1$. The third is the repeated type, $(2,a,b)$. Propositions \ref{H a drop} and \ref{H b drop} show that any link of $H_1$ with respect to either of the first two types yields an ideal satisfying $(\vardiamond)$. 
\par Assuming this information, to finish the proof we need to show that if $H_2$ is a link of $H_1$ with respect to $(2, a, b)$, then $H_2$ satisfies $(\star \star)$. The next two lemmas prove this when the minimal type is not $(2,a,b)$. If the minimal type of $H_1$ is $(2,a,b)$, then $H_2$ is a ghost double link of $H$. Let $H_2=C_1:H_1$ where $\text{type}(C_1)=(2,a,b)$. Since the quadric is minimal in $H$, to show $H_2$ satisfies $(\star \star)$, we need to show that for both $j=a$ and $j=b$, either the ghost term $R(-j)$ trims, or $j\in \{2n+2,2n+5\}$. Assume $R(-a)$ does not trim in the induced mapping cone resolution of $H_2$. Then by Lemma \ref{ci shift trims}, the degree $a$ generator of $C$ is not minimal in $H$, and the degree $b$ minimal generator of $C_1$ is not a weak associate of the degree $b$ minimal generator of $C$. However, since $g_3(H_1)=b$, the degree $b$ generator of $C_1$ must be a minimal generator of $H_1$, so $b\in \{a+b-3n-3, a+b-2n-5, a+b-2n-2\}$. Therefore, $a\in \{3n+3, 2n+5, 2n+2\}$, but $a\leq b<3n+3$, so $a\in \{2n+5, 2n+2\}$. The same argument applies with the roles of $a$ and $b$ switched. Hence $H_2$ satisfies $(\star \star)$. 
\end{proof}


\begin{lem} \label{ghost g2}
With the same set-up as Proposition \ref{ghost summands}, let $H_1=C:H$ where $C\subseteq H$ has type $(2,a,b)$. If $g_2(H_1) < a$, then $g_2(H_1) = a-1$ and $g_3(H_1) = b=2n+4$. Further, in this case, any link of $H_1$ with respect to $(2,a,b)$ satisfies $(\star \star)$.
\end{lem}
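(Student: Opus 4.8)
The plan is to extract everything from Ferrand's resolution $\mc{G}_{\bigcdot}$ of $H_1$ displayed in (\ref{H1 res}), relying on two standard facts about a grade $3$ perfect ideal: each grade jump $g_j$ is a minimal generator degree, and $g_j(H_1)$ equals the least generator degree $d$ with $\grade\big((H_1)_{\leq d}\big)\geq j$. First I would locate the minimal generators of $H_1$ of degree below $a$. Since $a\geq n+1$, $2n+4\leq b\leq 2n+5$, and $n\geq 4$, the only degrees occurring in $G_1$ that are smaller than $a$ are $a+b-3n-3$ (always) and $a+b-2n-5$ (only when $b=2n+4$, in which case it equals $a-1$); a short comparison of degrees with $G_2$ and $G_3$ shows these summands, and also $R(-2)$, survive to the minimal resolution, so in particular $g_1(H_1)=2$.

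The heart of part (A) is a \emph{common linear factor} argument. The resolution $\mc{G}_{\bigcdot}$ carries a minimal first syzygy $s$ of degree $a+b-3n-2$ (the $G_2$-summand in that degree does not cancel against any summand, as another degree check confirms). Because $n\geq 4$, the only minimal generators of $H_1$ of degree $\leq a+b-3n-2$ are the quadric $q$ and the generator $\theta$ of degree $a+b-3n-3$, and $\deg s$ is exactly one less than the degree of the Koszul syzygy of $q$ and $\theta$; hence $s$ is a non-Koszul relation $\ell\theta=\alpha q$ with $\ell$ a linear form. Unique factorization in $R$, together with $\theta\notin(q)$ (which follows from minimality of $\theta$), forces $\ell\mid q$; writing $q=\ell q'$ yields $q,\theta\in(q')$, so $\grade\big((H_1)_{\leq a+b-3n-3}\big)=1$. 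If $b=2n+5$ there are no minimal generators of $H_1$ of degree strictly between $a+b-3n-3$ and $a$, so $g_2(H_1)\geq a$, contradicting the hypothesis; therefore $b=2n+4$. Rerunning the argument with $b=2n+4$, where there are no generators of degree strictly between $a+b-3n-3=a-n+1$ and $a-1$, gives $g_2(H_1)\geq a-1$, so $g_2(H_1)=a-1$.

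It remains to show $g_3(H_1)=2n+4$, and then that any link of $H_1$ along $(2,a,b)$ satisfies $(\star \star)$. The inequality $g_3(H_1)\leq b=2n+4$ is immediate from $C\subseteq H_1$. For the reverse inequality, Lemma \ref{max socle shift}, applied to the complete intersection of minimal type in $H_1$ and combined with the computation that the top degree occurring in $G_3$ is $a+b-n+1$, yields only the weaker bound $g_3(H_1)\geq b-n+1$; I would sharpen it first by pinning down exactly which $G_1$-summands survive as minimal generators, which forces $g_3(H_1)$ — necessarily one of those degrees — into $\{a,a+2,2n+4\}$, and then by ruling out the two borderline values $g_3(H_1)\in\{a,a+2\}$: for each, link $H_1$ along the putative smaller complete intersection, read off the induced Ferrand resolution (trims controlled by Corollary \ref{trims} and Lemma \ref{ci shift trims}), and derive a contradiction from Lemma \ref{max socle shift} — this is exactly the mechanism of the proof of Proposition \ref{minimal link of I}. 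Finally, once $g_2(H_1)=a-1$ and $g_3(H_1)=b$ are known, the trims in $\mc{G}_{\bigcdot}$ are determined, so $H_1$ has an explicit minimal resolution; applying Ferrand's mapping cone to $H_2=C_1:H_1$ with $\text{type}(C_1)=(2,a,b)$ and controlling the ghost summands $R(-2),R(-a),R(-b)$ via the Koszul argument and Lemma \ref{ci shift trims} shows that the Betti table of $H_2$ agrees with that of resolution (\ref{J res}) outside homological degrees $1$ and $2$ in internal degrees $2n+2$ and $2n+5$; that is, $H_2$ satisfies $(\star \star)$.

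I expect the lower bound $g_3(H_1)\geq 2n+4$ to be the main obstacle: Lemma \ref{max socle shift} alone is too weak, so it must be combined with the complete accounting of which $G_1$-summands survive as minimal generators and, for the leftover cases, with an auxiliary chain of links of the type that makes Proposition \ref{minimal link of I} long. The ghost-term bookkeeping behind the last clause is routine but delicate and is of the kind gathered in Appendix B.
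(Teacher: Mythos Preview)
Your plan is essentially the paper's own argument: extract the linear syzygy in degree $a+b-3n-2$ from $\mc{G}_{\bigcdot}$, use the common-factor trick to force $g_2(H_1)>a+b-3n-3$, read off $b=2n+4$ and $g_2(H_1)=a-1$ from the generator degrees, and then control the $(2,a,b)$-link $H_2$ via ghost-term bookkeeping with Lemma~\ref{ci shift trims}. Two small points where the paper is sharper than your sketch: first, before attacking $g_3$, the paper observes that the degree-$b$ generator of $C$ is minimal in $H$ (since $b=2n+4$), so $R(-(a+2))$ trims between steps~2 and~3, and the Koszul argument then forces $R(-a)$ in step~1 to trim whenever $a<2n+4$; this removes $a$ from your candidate list for $g_3(H_1)$ outright (the case $a=2n+4$ is handled separately by counting generators at degree $b-1$). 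Second, ruling out $g_3(H_1)=a+2$ genuinely requires a \emph{two-step} chain---the first link pins $a=2n+1$ via Lemma~\ref{max socle shift}, and only the second link produces the contradiction---so your phrase ``link and contradict Lemma~\ref{max socle shift}'' is too optimistic on its own, though your closing remark about an auxiliary chain of links shows you anticipated this.
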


\begin{proof}
Recall that the terms of a graded free resolution of $R/H_1$ are given by (\ref{H1 res}) above, and that $2n+4\leq b \leq 2n+5$. Then $$a+b-2n-2 \geq a+2 > a \geq a+b-2n-5 > a+b-3n-2,$$ so the minimal syzygy of degree $a+b-3n-2$ is from a linear relation between the quadric and degree $a+b-3n-3$ generator of $H_1$. Since $a+b-3n-3\geq 2$, if $g_2(H_1)<a$, then $g_2(H_1) = a+b-2n-5$. In this case, we have $b<2n+5$, which implies $b=2n+4$ and $g_2(H_1)=a-1$. 
\par Since $b=2n+4$, the minimal generator of $C$ of degree $b$ must be a minimal generator of $H$. Hence, by Proposition \ref{min Koszul relation}.i, the term $R(-(a+2))$ in step 2 trims with $R(-(a+b-2n-2))$ in step 3 in resolution (\ref{H1 res}). Substituting $b=2n+4$ into the terms of resolution (\ref{H1 res}), one sees that either $\text{Syz}(R/H_1)_{\leq a+2} = \text{Syz}(R/H_1)_{\leq a}$ or $a=2n+4$. In the former case, the Koszul argument implies $R(-a)$ in step 1 trims with $R(-(a+b-2n-4))$ in step 2. In the case that $a=2n+4$, since $g_2(H_1)=2n+3$ and $\beta_{1,2n+3}(R/H_1)=1$, we have $\grade((H_1)_{\leq b-1})=2$, so $g_3(H_1)=b$.
\par Now assume $a<b=2n+4$ and $g_3(H_1)<b$. The only possibility is that $g_3(H_1) = a+2$. The link $H'$ of $H_1$ with respect to $(2,a-1,a+2)$ has a resolution $\mc{G}_{\bigcdot}'$ with terms
\begin{align} \notag
G_1' &= R(-2) \oplus R(-(a-n-2))^2 \oplus R(-(a-n))\oplus R(-(a-1))^{s+1} \oplus R(-(a+2))^{t+1} \\ \notag
G_2' &= R(-(a-n-1))^2 \oplus R(-(a-n+1)) \oplus R(-a) \oplus R(-(a+3)) \\ \notag
&\oplus R(-(a+n+1)) \oplus R(-(2a-2n-3)) \oplus R(-(a+2))^t \oplus R(-(a-1))^s \\ \notag
G_3' &= R(-(2a-2n-1)) \oplus R(-(a+n+2)).
\end{align} 
It is impossible to trim $\mc{G}_{\bigcdot}'$ down to a resolution which could describe a complete intersection (there are not enough ghost terms), so $H'$ cannot be a complete intersection. Further, since $g_3(H_1)<b$ and $g_3(H_1)=a+2$, we have $a\leq 2n+1$. Therefore, 
\[ 2(a-n-2) \leq a-1 \text{   and   } 2(a-n-2)\leq 2a-2n-3.\] 
Hence Lemma \ref{g1q1} implies $H'$ contains a complete intersection of type $(2,a-n-2,a+2)$. But since $H'$ is not a complete intersection and $a+n+2\geq 2a-2n-1$, Lemma \ref{max socle shift} implies $$2a-n+2\geq a+n+2.$$ Then $2n+1\leq a\leq 2n+1$, so $a=2n+1$. The link $H''$ of $H'$ with respect to $(2,a-n-2,a+2) = (2,n-1,2n+3)$ has a resolution $\mc{G}_{\bigcdot}''$ with terms
\begin{align} \notag
G_1'' &= R(-1) \oplus R(-2) \oplus R(-(n-1))\oplus R(-(n+3)) \oplus R(-(2n+3)) \\ \notag
G_2'' &= R(-2) \oplus R(-n) \oplus R(-(n+1))^t \oplus R(-(n+3)) \\ \notag
&\oplus R(-(n+4))^s \oplus R(-(n+5)) \oplus R(-(2n+2)) \oplus R(-(2n+4))^2 \\ \notag
G_3'' &= R(-(n+1))^t \oplus R(-(n+4))^{s+1} \oplus R(-(2n+3)) \oplus R(-(2n+5)).
\end{align} 
However, there are no ideals with such a graded free resolution. Indeed, since $n\geq 3$, the Koszul argument implies $R(-2)$ trims, and since $(H'')_{<n+3}$ is generated by a regular sequence of length two, the generators contribute exactly one minimal syzygy to $R/H''$. Thus $R(-(n+3))$ trims. But then $d(H'')=0$, so $H''$ is a complete intersection of type $(1,n-1,2n+3)$, which contradicts, among other properties, Lemma \ref{max socle shift}. This contradiction started with the assumption that $g_3(H_1)<b$, so $g_3(H_1)=b$.  
\par Therefore, the minimal type of $H_1$ is $(2,a-1,b)=(2,a-1,2n+4)$. To conclude the proof, suppose $H_2=C_1:H_1$ where $\text{type}(C_1)=(2,a,2n+4)$. Then the induced mapping cone resolution of $H_2$ has the same terms as $H$, plus $R(-a)\oplus R(-(2n+4))$ in steps 1 and 2. Following the proof of Lemma \ref{ci shift trims}.i, since $g_3(H_1)=2n+4$, we get that $R(-(2n+4))$ trims, and if $R(-a)$ does not trim, then the degree $a$ generator of $C$ is not minimal in $H$ and the degree $b$ generator of $C_1$ is not a weak associate of the degree $b$ generator of $C$. But $g_3(H_1)=b$, so $\beta_{1,b}(R/H_1)\neq 0$, and the only possibility is that $b=a+2$. In this case, $a=2n+2$, so $H_2$ satisfies $(\star \star)$.
\end{proof}



\begin{lem} \label{ghost g3}
With the same set-up as Proposition \ref{ghost summands}, let $H_1=C:H$ where $C\subseteq H$ has type $(2,a,b)$. If $g_3(H_1)<b$, then $a=2n+4$, $b=2n+5$, and $g_3(H_1)=2n+4$. Further, in this case, any link of $H_1$ with respect to $(2,a,b)$ satisfies $(\star \star)$.
\end{lem}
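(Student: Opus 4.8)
The plan is to run the same engine as in the proof of Lemma~\ref{ghost g2}, but with the roles of the second and third grade jumps interchanged: starting from the Ferrand resolution (\ref{H1 res}) of $H_1=C:H$, I would pin down $g_2(H_1)$, list the finitely many degrees that $g_3(H_1)$ could take, and then eliminate every possibility by iterating Ferrand mapping cones and computing the grade jumps at each stage with Lemmas~\ref{g1q1} and~\ref{max socle shift}.

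First I would show $g_2(H_1)=a$. Since $C\subseteq C:H=H_1$ is a complete intersection of type $(2,a,b)$, the minimal type of $H_1$ is bounded above by $(2,a,b)$, so $g_2(H_1)\leq a$ and $g_3(H_1)\leq b$. On the other hand, $g_3(H_1)<b$ rules out $g_3(H_1)=b$, so the contrapositive of Lemma~\ref{ghost g2} forbids $g_2(H_1)<a$; hence $g_2(H_1)=a$. Reading the generator degrees of $H_1$ off (\ref{H1 res}), and using $2n+4\leq b\leq 2n+5$ together with $n\geq 4$, the only degrees lying in the interval $[a,b)$ are $a$ itself and $a+b-2n-2$ (the latter in $(a,b)$ exactly when $a\leq 2n+1$); every other term, in particular $a+b-3n-3$ and $a+b-2n-5$, is $\leq a$. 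Therefore $g_3(H_1)\in\{a,\ a+b-2n-2\}$, and the conclusion $a=2n+4$, $b=2n+5$, $g_3(H_1)=2n+4$ amounts to eliminating all other combinations.

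The core of the argument is a case analysis organized by the value of $b$. When $b=2n+4$, substituting into (\ref{H1 res}) produces a minimal generator of $H_1$ in degree $a-1=a+b-2n-5$; I would use this generator together with the quadric and a linking argument (checking that it extends the quadric to a regular sequence, or else that the resulting height-one situation is itself impossible) to force $g_2(H_1)\leq a-1$ whenever $a\leq 2n+3$, contradicting $g_2(H_1)=a$; this pushes $a$ up to $2n+4$, but then $a=b$ gives $g_3(H_1)\geq g_2(H_1)=b$, contradicting the hypothesis, so $b=2n+4$ is impossible. When $b=2n+5$ and $g_3(H_1)=a+b-2n-2=a+3$ with $a\leq 2n+1$, I would form the link $H'$ of $H_1$ with respect to $(2,a,a+3)$ (using Lemma~\ref{ci shift trims}(ii) and the weak-associate relations between the generators of this complete intersection and those of $C$ to trim the ghost terms), compute its grade jumps from its Ferrand resolution via Lemmas~\ref{g1q1} and~\ref{max socle shift}, and squeeze the resulting inequalities on $a$ down to a single value, after which one or two further minimal links should produce an ideal whose only candidate resolution would describe a complete intersection of an impossible type (the analogue of the $(1,n-1,2n+3)$ contradiction in Lemma~\ref{ghost g2}) or would violate Lemma~\ref{max socle shift}. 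The last possibility, $b=2n+5$ and $g_3(H_1)=a$ with $a\leq 2n+3$, I would rule out by the same kind of iterated-link count; what survives is $b=2n+5$ with $g_3(H_1)=a=2n+4$.

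Finally, for the ``further'' clause I take $H_2=C_1:H_1$ with $\mathrm{type}(C_1)=(2,2n+4,2n+5)$ and run the endgame of the proof of Lemma~\ref{ghost g2}: the induced double-link resolution of $H_2$ has the terms of the resolution of $H$ plus ghost summands in degrees $2$, $2n+4$, and $2n+5$ in steps $1$ and $2$; the degree-$2$ pair trims because the quadric is a minimal generator of $H$, the degree-$(2n+4)$ pair trims because $g_3(H_1)=2n+4$ forces the relevant degree-$(2n+4)$ generator to be a minimal generator (invoking Lemma~\ref{ci shift trims} and the appropriate weak-associate relations), and a short syzygy-degree count shows the surviving degree-$(2n+5)$ terms can only appear in the positions permitted by $(\star \star)$; hence $H_2$ satisfies $(\star \star)$. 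I expect the third paragraph to be the main obstacle: exactly as in Lemma~\ref{ghost g2} there is no conceptual shortcut past the bookkeeping of iterated Ferrand mapping cones, the delicate point is verifying that each ghost term which ought to trim really does (this is where the weak-associate hypotheses of Lemma~\ref{ci shift trims} must be checked by hand at every link), and carrying the extra parameter $b\in\{2n+4,2n+5\}$ that was already fixed in Lemma~\ref{ghost g2} roughly doubles the number of subcases.
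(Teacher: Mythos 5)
Your high-level decomposition (pin down $g_2(H_1)=a$ via Lemma~\ref{ghost g2}, list the finitely many candidates for $g_3(H_1)$ from the Ferrand resolution (\ref{H1 res}), then kill them by iterated links) is the same as the paper's, but your reorganization by the value of $b$ introduces a real gap in the $b=2n+4$ branch.

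You propose to dispose of $b=2n+4$ outright by arguing that the degree-$(a-1)$ minimal generator of $H_1$ (the $R(-(a+b-2n-5))$ summand) forces $g_2(H_1)\leq a-1$ whenever $a\leq 2n+3$, contradicting $g_2(H_1)=a$. That implication is not available. The existence of a minimal generator of $H_1$ in degree $a-1$ does \emph{not} give a grade-2 complete intersection of type $(2,a-1)$: the quadric and that generator may share a linear factor. Your parenthetical ``or else that the resulting height-one situation is itself impossible'' is exactly where the work lies, and it cannot be discharged by the degree bookkeeping you have in hand at this stage -- e.g. the degree-$a$ syzygies that a common linear factor would force are already present in (\ref{H1 res}) as $R(-(a+b-2n-4))^2$, so nothing numerically obstructs the grade-1 scenario. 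The paper never attempts this shortcut. Instead it keeps $g_2(H_1)=a$ (granted by Lemma~\ref{ghost g2}) and treats $g_3(H_1)=a+b-2n-2$ as a single case for both values of $b$; that case forces $a\leq 2n+1$, then a chain of links produces $g_2(H_2)=a-n-1$, Lemma~\ref{max socle shift} squeezes $a$ into $\{2n,2n+1\}$, and each value is eliminated -- the $a=2n$ subcase by tracing a unit entry back to $\beta_{1,2n}(R/H)=0$, and the $a=2n+1$ subcase by a delicate argument involving $(H_3)_{\leq n}$, the colon ideals $B:\ell$ and $B:f_2$, and a projective-dimension contradiction (using Remark~\ref{artinian rem}). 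This last step is substantially more than ``one or two further minimal links producing an impossible complete intersection type,'' and your plan gives no hint of it.

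Secondarily, your sketch of the ``further'' clause omits the key point that $s=t=0$. The paper establishes this by invoking the proof of Proposition~\ref{H b drop} (which computes the Betti numbers of a minimal link of $H_1$ and kills the $R(-(2n+1))^s$ and $R(-(2n+4))^t$ terms using Lemma~\ref{g12l} and Lemma~\ref{relations from socle}), and only then can one conclude that $R(-b)$ trims in (\ref{H1 res}), that the quadric and degree-$a$ generator of $C$ give a minimal Koszul relation in $H$, and finally that a repeated-type link satisfies $(\star\star)$. Your ``short syzygy-degree count'' for the surviving degree-$(2n+5)$ terms is plausible in spirit but would need to come \emph{after} the $s=t=0$ step, which your proposal does not supply.
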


\begin{proof} 
Recall that the terms of a graded free resolution of $R/H_1$ are given by (\ref{H1 res}) above. By the previous lemma we may assume $g_2(H_1)=a$.  
\par Suppose $g_3(H_1)<b$. Then since $b \leq 2n+5 < 3n+3$, we know $a+b-3n-3<a$. Thus $g_3(H_1)\neq a+b-3n-3$, which leaves only two cases: either $g_3(H_1) = a+b-2n-2$ or $g_3(H_1)=a+b-2n-5$. We show the first case is impossible, and that the second case only happens under the conditions in the statement of the proposition. \\


\noindent \textbf{Case 1:} $g_3(H_1) = a+b-2n-2<b$. In this case, $a\leq 2n+1$. Let $C_1\subseteq H_1$ be a complete intersection of type $(2,a,a+b-2n-2)$, and let $H_2=C_1:H_1$. Then $H_2$ has a free resolution $\mc{G}_{\bigcdot}'$ with terms
\begin{align} \notag
G_1' &= R(-2) \oplus R(-(a-n-1))^2 \oplus R(-(a-n+1)) \oplus R(-a)^{s+1}  \\ \notag
&\oplus R(-(a+2)) \oplus R(-(a+3))^t  \\ \label{H2 2a-2n trim} 
G_2' &= R(-(a-n))^2 \oplus R(-(a-n+2)) \oplus R(-(a+1)) \oplus R(-(a+4))^2 \\ \notag
&\oplus R(-(a+n+2)) \oplus R(-(2a-2n-2)) \oplus R(-a)^s \oplus R(-(a+3))^t  \\ \notag
G_3' &= R(-(2a-2n)) \oplus R(-(a+5)) \oplus R(-(a+n+3)).
\end{align}
Notice the absence of $R(-(a+b-2n-2))$ in steps 1 and 2, since they trim by Lemma \ref{ci shift trims}.ii. 
\par Now $a\leq 2n+1$, so $2(a-n-1)< a$, which by Lemma \ref{g1q1} means $g_2(H_2)=a-n-1$. Then there is a complete intersection $C_2\subseteq H_2$ with type $(2,a-n-1,a+3)$, and applying Lemma \ref{max socle shift} to this type, we have $a\geq 2n$. Thus either $a=2n$ or $a=2n+1$. 
\par If $a=2n$, then $H_3=C_2:H_2$ contains a linear form. Lemma \ref{linear Koszul} implies $R(-(n+4))=R(-((2a+b-2n)-(2a-2n))$ in steps 1 and 2 trim in the Ferrand's resolution of $H_3$. We trace this unit entry back to the resolution of $H_1$. Indeed, By Proposition \ref{min Koszul relation}.ii, the terms $R(-a)$ and $R(-(2a-2n))$ in steps 3 and 4, respectively, of (\ref{H2 2a-2n trim}) could trim, and by Proposition \ref{min Koszul relation}.i, the terms $R(-b)$ and $R(-(a+b-2n))^s$ in steps 1 and 2, respectively, of (\ref{H1 res}) could trim. Again by Proposition \ref{min Koszul relation}.ii, the quadric and degree $a$ generator of $C$ yield a minimal Koszul relation in $H$, which in particular implies the degree $a$ generator of $C$ is a minimal generator of $H$. However, $a=2n$ and $\beta_{1,2n}(R/H)=0$, so we have a contradiction.
\par If $a=2n+1$, the link $H_3=C_2:H_2$ has a free resolution $\mc{G}_{\bigcdot}''$ with graded terms
\begin{align} \notag
G_1'' &= R(-2)^2 \oplus R(-n)^2 \oplus R(-(n+4)) \oplus R(-(2n+4)) \\ \label{H3 res imp}
G_2'' &= R(-3) \oplus R(-(n+1))^2 \oplus R(-(n+4)) \oplus R(-(2n+3)) \\ \notag 
&\oplus R(-(2n+5))^2 \oplus R(-(n+6)) \oplus R(-(n+2))^t \oplus R(-(n+5))^s \\ \notag
G_3'' &= R(-(n+3)) \oplus R(-(2n+4))\oplus R(-(2n+6)) \\ \notag
&\oplus R(-(n+2))^{t-1} \oplus R(-(n+5))^{s+1}.
\end{align}
First notice that if $R(-(n+4))$ trims, then by the same argument above, we can trace back the unit entry to (\ref{H1 res}) and conclude that the degree $a=2n+1$ generator of $C$ is a minimal generator of $H$, which is impossible since $\beta_{1,2n+1}(R/H)=0$. This impossibility proves that if $g_3(H_1)<b$, then $g_3(H_1)\neq a+b-2n-2$, by the following argument. 
\par Let $$A=(H_3)_{\leq n}=(q_1, q_2, f_1, f_2),$$ where $\deg q_i=2$ and $\deg f_i=n$ for $i=1,2$, and set $$B=(q_1, q_2, f_1)$$ where $f_1$ is chosen so that $\grade(B)=2$. The degree $3$ syzygy in $H_3$ implies there are three linear forms $\ell,\ell_1$, and $\ell_2$ such that $\ell_1$ and $\ell_2$ are linearly independent and $q_i = \ell \ell_i$ for $i=1,2$. Notice that $(B,\ell)=(\ell,f_1)$ is a grade $2$ complete intersection. Then using the Horseshoe lemma on the short exact sequence
\begin{equation} \label{SES ha}
0 \to R/(B:\ell) (-1) \xrightarrow{\cdot \ell} R/B \to R/(B,\ell) \to 0,
\end{equation}
and (\ref{H3 res imp}) to construct the minimal graded resolution of $R/B$, we have $\beta_{1,n}(R/(B:\ell))=1$. However, $B:\ell\supseteq (\ell_1,\ell_2,f_1)$, so $B:\ell=(\ell_1,\ell_2,f_1)$ is a grade $3$ complete intersection.
\par Now the mapping cone induced by the first map in the short exact sequence
\[ 0 \to R/(B:f_2)(-n) \xrightarrow{\cdot f_2} R/B \to R/A \to 0 \]
yields a graded free resolution of $R/A$. Using (\ref{H3 res imp}) above,  $\beta_{2,n+4}(R/A)>0$, and we also know $\beta_{2,n+4}(R/B)=0$ by (\ref{SES ha}), so $\beta_{1,4}(R/(B:f_2))>0$. Since $B:f_2$ also contains two linearly independent linear forms, it follows that $B:f_2$ is a grade $3$ complete intersection with a $\beta_{3,6}(R/B:f_2)=1$. But then $\beta_{4,n+6}(R/A)>0$, which implies pd$(R/A)=4$. This is a contradiction (recall Remark \ref{artinian rem}).   \\


\noindent \textbf{Case 2:} $g_3(H_1) = a+b-2n-5<b$. Then $a\leq a+b-2n-5<b$ implies $b\geq 2n+5$, so $b=2n+5$. In this case $a+b-2n-5=a$, so the minimal type in $H_1$ is $(2,a,a)$. If $H_2$ is a minimal link of $H_1$, then $H_2$ has a free resolution $\mc{G}_{\bigcdot}''$ with terms
\begin{align} \notag 
G_1'' &= R(-2) \oplus R(-(a-n-4))^2 \oplus R(-(a-n-2)) \oplus R(-(a-3))^s \\ \notag
&\oplus R(-(a-1)) \oplus R(-a)^{t+2} \\ \label{H2 res}
G_2'' &= R(-(a-n-3))^2 \oplus R(-(a-n-1)) \oplus R(-(a-2)) \oplus R(-(a+1))^2 \\ \notag
&\oplus R(-(a+n-1)) \oplus R(-(2a-2n-5)) \oplus R(-(a-3))^{s} \oplus R(-a)^{t+1} \\ \notag
G_3'' &= R(-(2a-2n-3)) \oplus R(-(a-1)) \oplus R(-(a+n)).
\end{align}
Since $a<b=2n+5$, we have $2(a-n-4) \leq a-3$. Then Lemma \ref{g1q1} guarantees there is a complete intersection in $H_2$ of type $(2, a-n-4, a)$. Since $\beta_{2,a-n-3}(R/H_2)=2$, there are two linear relations among the generators of $H_2$, and $H_2$ does not contain any linear forms. Hence $H_2$ is not a complete intersection. Therefore, applying Lemma \ref{max socle shift} to the type $(2,a-n-4,a)$ we have $a\geq 2n+3$, so either $a=2n+3$ or $a=2n+4$. In the former case, the link of $H_2$ with respect to $(2, a-n-4, a)$ has a linear form. Using Lemma \ref{linear Koszul} and Proposition \ref{min Koszul relation} to trace back unit entries in the maps of the resolutions above, we get that the degree $a=2n+3$ generator of $C$ is minimal in $H$, which is impossible since $\beta_{1,2n+3}(R/H)=0$. Thus $a=2n+4$. \\

To summarize, Case 1 is impossible, and Case 2 implies $a=2n+4$ and $b=2n+5$. The proof of Proposition \ref{H b drop} shows that $s=t=0$ and $R(-b)$ trims in steps 1 and 2 of (\ref{H1 res}). Then Proposition \ref{min Koszul relation}.ii implies the quadric and degree $a$ generator of $C$ yield a minimal Koszul relation in $H$, so the degree $a$ generator of $C$ is a minimal generator of $H$. Thus $R(-(a+b-2n-2))$ in step 3 trims with $R(-(b+2))$ in step 2 of (\ref{H1 res}). With this information, if $H_2$ is a link of $H_1$ with respect to $(2,a,b)=(2,2n+4,2n+5)$, then Ferrand's induced resolution of $H_2$ shows that $H_2$ satisfies $(\star \star)$, with $\beta_{1,2n+2}(R/H_2)=\beta_{2,2n+2}(R/H_2)=0$ and $\beta_{1,2n+5}(R/H_2)=\beta_{2,2n+5}(R/H_2)\leq 1$. 
\end{proof}




\section{Proof of the Main Theorem}
In this section we finish the proof of our main result, Theorem \ref{thm:main}. The proof of the theorem depends on how the minimal type of a link of an ideal satisfying $(\star)$ compares to the type of the complete intersection used to construct the link. We see that they are the same if the quadric is avoided, and that it could only slightly change when the quadric is used. In any case, sequentially bounded double links of ideals satisfying $(\star)$ will lie in one of the three classes defined in the previous section: Recall Definitions \ref{min link res}, \ref{main ideals}, and \ref{star star} for  $(\vardiamond)$, $(\star)$, and $(\star \star)$, respectively. 

\begin{proof}[Proof of Theorem \ref{thm:main}.ii]
Fix $n\geq 4$ and let $J$ be an ideal satisfying $(\star)$. In Theorem \ref{thm:main}.i, we showed $J$ is homogeneously licci, and now we show $J$ is not SBL. Let $C\subseteq J$ be a complete intersection with type $(a_1, a_2, a_3)$.
\begin{enumerate}[(1.)]
\item If $a_1>2$, then any sequentially bounded double link of $J$ with respect to $(a_1,a_2,a_3)$ satisfies $(\star)$. This follows from Propositions \ref{without quadratic} and \ref{big a1 trim}.
\item If $a_1=2$, then any sequentially bounded double link of $J$ with respect to $(a_1,a_2,a_3)$ satisfies either $(\vardiamond)$ or $(\star \star)$. When $a_3\leq 2n+5$, this is proved in Proposition \ref{ghost summands} (since the conditions of $(\star)$ meet the conditions of $(\star \star)$). When $a_3\geq 2n+6$, this is proved in Proposition \ref{with quadratic}.
\end{enumerate}
Therefore, if there is a sequence of sequentially bounded links
\[ J = J_0 \sim \dots \sim J_{2s} = J',\]
then with (1.) and (2.) above and Proposition \ref{minimal link of I}, it follows that $J'$ satisfies either $(\star)$, $(\star \star)$, or $(\vardiamond)$. Therefore, $r(R/J')=3$ and $\lambda(J')\leq 1$. Moreover, $J'$ is not a complete intersection, and cannot be directly linked to a complete intersection (otherwise, $\lambda(J')\geq 3$). Therefore, $J$ is not SBL. 
\end{proof}


\begin{prop} \label{without quadratic}
Fix $n\geq 4$ and let $J$ be an ideal satisfying $(\star)$. If $C\subseteq J$ is a complete intersection of type $(a_1,a_2,a_3)$ with $a_1>2$, then the minimal type of the link $C:J$ is $(a_1,a_2,a_3)$. 
\end{prop}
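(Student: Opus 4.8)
The plan is to run Ferrand's mapping cone on the link $L_1 := C : J$ and then show the grade jumps $g_2(L_1)$ and $g_3(L_1)$ are forced to equal $a_2$ and $a_3$, respectively. Since $C = (h_1,h_2,h_3)$ with $\deg h_i = a_i$ and $a_1 > 2$, none of the generators of $C$ can be weak associates of the quadric of $J$, so in the construction the only possible minimal Koszul relations come from pairs of generators of $C$ that sit among the higher-degree minimal generators of $J$. First I would write down the terms of a (possibly non-minimal) graded free resolution $\mc{P}_{\bigcdot}$ of $R/L_1$ via Theorem \ref{Ferrand} applied to resolution (\ref{J res}): in homological degree $1$ the free module is $R(-a_1)\oplus R(-a_2)\oplus R(-a_3)$ together with the duals $\oplus_j R(-(a-j))^{\beta_{3,j}(R/J)}$ where $a = a_1+a_2+a_3$; similarly in degrees $2$ and $3$. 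Then I would record, using Lemmas \ref{g1q1} and \ref{max socle shift} applied to $J$ itself, that $g_2(J) = n+1$ and $g_3(J) = 2n+4$, hence any complete intersection type $(a_1,a_2,a_3)\subseteq J$ satisfies $a_2 \ge n+1$ and $a_3\ge 2n+4$, and therefore $a_1+a_2+a_3$ is bounded below, which in turn controls which of the "dual" shifts $a - j$ can be small.

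The main work is two claims, mirroring the structure of Proposition \ref{minimal link of I}. \textbf{Claim 1:} $g_2(L_1) = a_2$. The candidate lower degrees in step $1$ of $\mc{P}_{\bigcdot}$ besides $a_1, a_2$ are the shifts $a - j$ for $j$ a top-degree generator degree of $J$, i.e. $j \in \{2n+4, 2n+7, 3n+5\}$, giving shifts $a-(3n+5)$, $a-(2n+7)$, $a-(2n+4)$. I would show $a-(2n+7)$ and $a-(2n+4)$ both exceed $a_2$ using $a_1>2$ together with $a_2\ge n+1$, $a_3\ge 2n+4$; the only genuinely small one is $a-(3n+5)$, and I would argue that either $a-(3n+5) > a_2$ outright, or, if $a-(3n+5)\le a_2$, the corresponding syzygy of $L_1$ in step $2$ cannot cause a trim because a trim there would force (via Proposition \ref{min Koszul relation}) a pair of generators of $C$ of degrees $a_1$ and $3n+5$ to give a minimal Koszul relation in $J$, which is impossible since $\beta_{1,3n+5}(R/J) = 0$ (indeed the top generating degree of $J$ is $n+3 < 3n+5$ for $n\ge 4$). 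Hence that syzygy persists and is not Koszul, forcing $g_2(L_1) = a_2$. \textbf{Claim 2:} $g_3(L_1) = a_3$. Supposing $g_3(L_1) < a_3$, the only possibilities are $g_3(L_1) = a - (3n+5)$ or $g_3(L_1) = a-(2n+7)$ or $a-(2n+4)$; using $g_2(L_1)=a_2$ from Claim 1 and Lemma \ref{max socle shift}, each case yields numerical inequalities on $a_1, a_2, a_3$. I would then, exactly as in Proposition \ref{minimal link of I}, construct a further link $L'$ (or a chain of two more links) with respect to the putative smaller complete intersection type, write its Ferrand resolution, and derive a contradiction — either with Lemma \ref{max degree relations} (too few relations in high degree), or by driving the process down to an ideal containing a linear form whose resolution via Lemma \ref{linear Koszul} would force it to be a complete intersection of an impossible type (violating Lemma \ref{max socle shift} since $n\ge 4$), or by tracing unit entries back to conclude $\beta_{1,j}(R/J)\ne 0$ for some $j$ where it must vanish.

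Once both claims hold, $L_1$ contains a complete intersection of type $(a_1, a_2, a_3)$ — namely we may lift minimal generators of $L_1$ in the right degrees, using that a regular sequence of those degrees exists because grade considerations (the ring is $3$-dimensional by Remark \ref{artinian rem}, or one passes to the Artinian reduction) guarantee a general choice is regular — and by the two grade-jump computations no complete intersection of strictly smaller type exists. Hence the minimal type of $C : J$ is exactly $(a_1, a_2, a_3)$, as asserted.

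The hard part will be Claim 2: bounding away all the competing small shifts requires careful bookkeeping with the inequalities $a_1 > 2$, $a_2 \ge n+1$, $a_3 \ge 2n+4$, and in the borderline cases the contradiction is not immediate from a single Ferrand resolution but needs one or two further links plus an appeal to Lemma \ref{max socle shift} or Lemma \ref{linear Koszul} — essentially the same "descend until you hit a forced linear form or an impossible Betti table" strategy used in Proposition \ref{minimal link of I}, and getting the case division exhaustive (rather than missing a numerically feasible branch) is where the real care is needed. A secondary subtlety is making sure the ghost/dual terms coming from $\beta_{3,\bullet}(R/J) = (1,1,1)$ in resolution (\ref{J res}) are handled — there are three of them and they interact with the three complete-intersection shifts, so the resolution $\mc{P}_{\bigcdot}$ has several terms of comparable degree that must be sorted correctly before any trim analysis.
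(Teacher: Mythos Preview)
Your overall strategy---compute $g_2(L_1)$ and $g_3(L_1)$ directly by syzygy analysis, mirroring the proof of Proposition~\ref{minimal link of I}---is a plausible route, but it is \emph{not} the one the paper takes, and you miss the observation that makes the paper's argument dramatically shorter. Since $a_1 > 2$, no pair $a_j + a_k$ can equal $a - 2$, so the summand $R(-(a-2))$ in step~3 of Ferrand's resolution of $J_1 = C:J$ (coming from the quadric generator of $J$) cannot trim and is the maximal back-twist. Lemma~\ref{max socle shift} then forces every complete intersection type in $J_1$ to have sum at least $a-1$. Combined with $g_1(J_1) = a_1$ (which follows from $a-3n-5 \ge a_1$), any strictly smaller type must be $(a_1, a_2-1, a_3)$ or $(a_1, a_2, a_3-1)$. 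The paper then takes \emph{one} further link $J_2 = C_1:J_1$ with this putative smaller type, writes down its presentation, and gets a contradiction in a few lines via the Koszul argument and a count of low-degree Betti numbers. No multi-step descent or linear-form endgame is needed.

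Your route can in principle be made to work, but several details in the proposal are wrong or confused. In Claim~1 you write that a trim ``would force a pair of generators of $C$ of degrees $a_1$ and $3n+5$ to give a minimal Koszul relation in $J$''---but $3n+5$ is a \emph{socle} shift of $J$, not a generator degree of $C$; the trim mechanism of Proposition~\ref{min Koszul relation} only involves pairs among $a_1, a_2, a_3$. What you actually want to say is that the minimal syzygy of $L_1$ in degree $a-3n-4$ is a non-Koszul relation between the generators of degrees $a_1$ and $a-3n-5$, forcing their ideal to have grade~$1$. (Also, the top generating degree of $J$ is $2n+4$, not $n+3$ as you wrote.) For Claim~2 your proposed descent through further links would require a substantially longer case analysis than the paper's, because without the type-sum bound you have three candidate values for $g_3(L_1)$ rather than a single ``drop by one'' alternative. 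The approach the paper uses for Proposition~\ref{minimal link of I} works cleanly there precisely because the first coordinate is $2$ and the quadric gives strong control; here, with $a_1 > 2$, the socle-degree trick is the right substitute and you should use it.
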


\begin{proof}
Let $a=a_1+a_2+a_3$. The link $J_1 = C:J$ has a graded free resolution $\mc{F}_{\bigcdot}'$ with graded free summands
\begin{align} \notag
F_1' &= R(-(a-3n-5)) \oplus R(-(a-2n-7)) \oplus R(-(a-2n-4))  \\ \notag
&\oplus R(-a_1) \oplus R(-a_2) \oplus R(-a_3) \\ \label{J1 res no quad}
F_2' &= R(-(a-3n-4)) \oplus R(-(a-2n-6))^2 \oplus R(-(a-2n-3)) \\ \notag
&\oplus R(-(a-n-4)) \oplus R(-(a-n-2))^2 \oplus R(-(a_1+a_2)) \\ \notag
& \oplus R(-(a_1+a_3)) \oplus R(-(a_2+a_3)) \\ \notag
F_3' &= R(-(a-2n-4) \oplus R(-(a-n-3)) \\ \notag
&\oplus R(-(a-n-1))^2 \oplus R(-(a-2)).
\end{align} 
Since $a_1>2$, the summand $R(-(a-2))$ in step 3 cannot trim and is the maximal shift in the resolution. Assume $C_1\subseteq J_1$ is a complete intersection with $\text{type}(C_1)< \text{type}(C)$. Then by Lemma \ref{max socle shift}, since $a-3n-5\geq a_1$, we have $\text{type}(C_1)=(a_1, a_2', a_3')$ where $a_2+a_3=a_2'+a_3'-1$ and $a_i'=a_i-1$ for either $i=2$ or $i=3$. The link $J_2=C_1:J_1$ has a presentation $F_2\to F_1\to J_2$, where
\begin{align} \notag
F_1 &= R(-1)\oplus R(-n)^2 \oplus R(-(n+2)) \oplus R(-(2n+3)) \oplus R(-a_1) \oplus R(-a_2')\oplus R(-a_3') \\ \notag
F_2 &= \oplus R(-(n+1))^2\oplus R(-(n+3))\oplus R(-(2n+2))\oplus R(-(2n+5))^2 \oplus R(-(3n+3)) \\ \notag 
&\oplus R(-(a_1-1))\oplus R(-(a_2-1))\oplus R(-(a_3-1)) \notag
\end{align}
Assume $a_2'=a_2-1$. Then by Lemma \ref{ci shift trims}.i., to reach a contradiction, we may assume that $R(-(a_2-1))$ trims. Since $C_1$ is the minimal type of $J_1$, we have $\beta_{1,a_2-1}(R/J_1)>0$, so $a_2-1\in \{a-3n-5, a-2n-7, a-2n-4\}$. Therefore, 
\begin{equation} \label{step 1 contr} 
a_1+a_3\in \{ 3n+4, 2n+6, 2n+3\} 
\end{equation}
However, the Koszul argument applied to the linear form and degree $2n+3$ generator of $J_2$ yields the inequality 
\[  
2n+3 \leq a_i-1\leq 2n+4
\]
for either $i=1$ or $i=3$. By (\ref{step 1 contr}), since $a_1\leq a_3$, we must have $i=3$. Then $2n+4\leq a_3\leq 2n+5$, and by (\ref{step 1 contr}) with the fact that $a_1>2$, we get $n-1\leq a_1\leq n$. Then $\beta_{2,a_1-1}(R/J_2)=1$, which is impossible since $\sum_{j\leq n-1} \beta_{1,j}(R/J_2)=1$. The same argument applies if $a_3'=a_3-1$. Hence $\text{type}(C)=(a_1,a_2,a_3)$ is the minimal type of $J_1$.
\end{proof}

Using Proposition \ref{without quadratic}, when $a_1>2$, we see that any sequentially bounded double link of $J$ with respect to $(a_1,a_2,a_3)$ is actually a ghost double link of $J$. Proposition \ref{big a1 trim} shows that the ghost terms $R(-a_i)$ all trim, which proves step (1.) above. To prove step (2.), we first need two lemmas that detect and describe using the minimal Koszul relation in $J$ to construct a link. 

\begin{lem} \label{deduce trim}
Fix $n\geq 4$ and let $J$ be an ideal satisfying $(\star)$. Let $C\subseteq J$ be a complete intersection with type $(2,a,b)$ and $b\geq 2n+6$, and set $J_1=C:J$. If $g_3(J_1)<b$, then the quadric and degree $a$ generators of $C$ yield a minimal Koszul relation in $J$.  
\end{lem}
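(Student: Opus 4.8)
The plan is to reduce the assertion, via double linkage, to showing that the degree-$b$ generator of $C$ is not a minimal generator of $J_1$, and then to extract this from the shape of Ferrand's resolution of $J_1$ together with Lemma~\ref{max socle shift}.

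Write $C=(q,f,h)$ with $\deg q=2$, $\deg f=a$, $\deg h=b$. Since $R/J_1$ is perfect of grade $3$ and $C\subseteq J_1$ is a grade-$3$ complete intersection, double linkage gives $J=C:J_1$. Hence, applying Corollary~\ref{construct Koszul} to the ideal $J_1$, the complete intersection $C\subseteq J_1$, and the generator $h$ (relabelled first), it is enough to prove
\[
g_3(J_1)<b\ \Longrightarrow\ h\in\mf{m} J_1 .
\]
Equivalently: in the Ferrand mapping–cone resolution $\mc Q_{\bigcdot}$ of $J_1=C:J$ built from the minimal resolution~(\ref{J res}) of $J$, the copy of $R(-b)$ in step~$1$, which is the summand of $K_2^{*}$ mapping onto $h$, must trim.

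I would first record the forced behaviour of $\mc Q_{\bigcdot}$, writing its terms explicitly with the shifts $\sigma-j$, $\sigma=2+a+b$. The quadric $q$ is a minimal generator of $J$, since $\beta_{1,2}(R/J)=1$, so by Proposition~\ref{min Koszul relation}.i the top pair $R(-(a+b))$ in steps~$2$ and~$3$ trims; because $b\ge 2n+6>2n+4$ one has $\beta_{1,b}(R/J)=0$, so $h\in\mf{m}J$ already; and $q$ is a minimal generator of $J_1$, because the Koszul relation between $f$ and $h$ has degree $a+b\ge 3n+7$, larger than every $j$ with $\beta_{2,j}(R/J)\ne 0$, so it is not a minimal syzygy of $J$ and thus $q\notin\mf{m}J_1$ by Proposition~\ref{min Koszul relation}.ii.

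The heart of the argument is the contrapositive of the reduced statement: assume $h$ is a minimal generator of $J_1$; I will derive $g_3(J_1)\ge b$. Since $q$ and $h$ are then minimal generators of $J_1$ in degrees $2$ and $b$, the Koszul argument (Definition~\ref{Koszul argument}) forces a minimal second syzygy of $R/J_1$ in degree $b+1$ or $b+2$; matching this against the degrees appearing in $\mc Q_{\bigcdot}$ restricts $a$ to a short explicit list, and for several of those values the relevant step-$2$ summand visibly does not trim anyway. For each surviving $a$ I would pin down the minimal type $(2,g_2(J_1),g_3(J_1))$ of $J_1$ using Lemma~\ref{g1q1}, pass to the further link $J_2=C_1:J_1$ with $C_1\subseteq J_1$ of that type, write out the terms of Ferrand's resolution of $J_2$, and then iterate exactly as in the proofs of Lemmas~\ref{ghost g2} and~\ref{ghost g3}: repeatedly split off a complete intersection and apply Lemma~\ref{max socle shift} to its type, forcing the chain to terminate either in a complete intersection whose socle degree contradicts Lemma~\ref{max socle shift}, or in an ideal of projective dimension exceeding $3$ (recall Remark~\ref{artinian rem}) — unless $g_3(J_1)\ge b$. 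This value-by-value bookkeeping is the main obstacle: there is no single inequality that settles it, because which ghost terms of $J_1$ trim and which complete intersection types sit inside $J_1$ interact, and both must be tracked together; the borderline cases, in particular $a=n+1$ (for which the stated conclusion genuinely forces $g_3(J_1)\ge b$) and $a=b-2$, need separate treatment. Once every case yields $g_3(J_1)\ge b$, the contrapositive is proved, so $g_3(J_1)<b$ gives $h\in\mf{m}J_1$, and Corollary~\ref{construct Koszul} finishes the proof.
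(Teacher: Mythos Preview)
Your reduction is sound: since $J = C:J_1$, Corollary~\ref{construct Koszul} applied to $C\subseteq J_1$ shows that $h\in\mf{m}J_1$ implies $q,f$ yield a minimal Koszul relation in $J$, so it is enough to prove $g_3(J_1)<b \Rightarrow h\in\mf{m}J_1$. The paper uses this same equivalence, via Proposition~\ref{min Koszul relation}.ii, at the end of its argument.

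The gap is in your proposed engine for the contrapositive. You claim that if $q$ and $h$ are both minimal in $J_1$, the Koszul argument forces a minimal syzygy in degree $b+1$ or $b+2$, and that matching this against the shifts in Ferrand's resolution restricts $a$ to a short list. But the summand $R(-(b+2))$ in step~2 of that resolution comes from $K_1^*$ (it \emph{is} the Koszul syzygy between $q$ and $h$), and by the structure of $\partial_2$ and $\partial_3$ it can only trim with step~3, which happens precisely when $f$ is a minimal generator of $J$ --- i.e., only when $a\in\{n+1,n+3,2n+4\}$. For every other value of $a$ (and often for these too), $R(-(b+2))$ survives as a minimal syzygy, the Koszul argument is already satisfied, and you obtain no restriction on $a$ whatsoever. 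Your case analysis on $a$ therefore never gets started. You also never establish $g_2(J_1)=a$, which is needed before you can speak of the ``minimal type $(2,g_2(J_1),g_3(J_1))$'' you plan to feed into the next link.

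The paper organizes the computation differently and more efficiently. It first proves $g_2(J_1)=a$ directly from the resolution (\ref{J1 res res}), then observes that $g_3(J_1)<b$ forces $g_3(J_1)$ to be one of the three remaining generator degrees $a+b-3n-3$, $a+b-2n-5$, $a+b-2n-2$. It then rules out the first and third values by passing to further links $J_2$, $J_3$ and obtaining contradictions with Lemma~\ref{max socle shift} or Lemma~\ref{max degree relations}; for the middle value it pins down $a=2n+4$ and shows the term $R(-b)$ trims, which is exactly $h\in\mf{m}J_1$. So the natural case split is on the three candidate values of $g_3(J_1)$, not on $a$.
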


\begin{proof}
The link $J_1=C:J$ has a free resolution $\mc{F}_{\bigcdot}'$ with graded free modules
\begin{align} \notag
F_1' &= R(-2) \oplus R(-a) \oplus R(-b) \\ \notag
&\oplus R(-(a+b-3n-3)) \oplus R(-(a+b-2n-5)) \oplus R(-(a+b-2n-2)) \\ \label{J1 res res}
F_2' &= R(-(a+b-3n-2)) \oplus R(-(a+b-2n-4))^2 \oplus R(-(a+b-2n-1)) \\ \notag
&\oplus R(-(a+b-n-2)) \oplus R(-(a+b-n))^2 \oplus R(-(a+2)) \oplus R(-(b+2)) \\ \notag
F_3' &= R(-(a+b-2n-2)) \oplus R(-(a+b-n-1)) \oplus R(-(a+b-n+1))^2.  
\end{align}
First notice that by Proposition \ref{min Koszul relation}.ii, $\beta_{1,a}(R/J_1)\geq 1$. Indeed, the quadric and degree $b$ generator of $C$ do not yield a minimal Koszul relation in $J$. 
\par If $g_2(J_1)<a$, then since $b\geq 2n+6$, we must have $g_2(J_1) = a+b-3n-3$. In this case, $2n+6\leq b\leq 3n+2$. Then $a+b-3n-2 \leq a$, so there is a minimal, non-Koszul syzygy between the quadric and degree $a+b-3n-3$ generators, which is a contradiction. Therefore, $g_2(J_1) = a$. 
\par Assume $g_3(J_1)<b$. Then $g_3(J_1)\in \{a+b-3n-3,a+b-2n-5,a+b-2n-2\}$, and we proceed to show $g_3(J_1)=a+b-2n-5$ is the only option. Let $C_1\subseteq J_1$ have type $(2,a,g_3(J_1))$ and set $J_2=C_1:J_1$. \\

\noindent \textbf{Case 1:} $b>g_3(J_1)=a+b-3n-3\geq a$. Then $a\leq 3n+2$, so $2a-3n-3 \leq a$. Therefore, using Ferrand's induced resolution of $J_2$, we see that if $j>a$ and $\beta_{2,j}(R/J_2)>0$, then $j=a+1$ and $\beta_{2,a+1}(R/J_2)=1$. However, $\beta_{1,a}(R/J_2)=1$ and $\beta_{1,j}(R/J_2)=0$ for $j>a$, so $\grade(J_2)=2$ by Lemma \ref{max degree relations}. This is a contradiction. \\
    
\noindent \textbf{Case 2:} $b>g_3(J_1)=a+b-2n-2\geq a$. Then 
\begin{equation} \label{a > 2n+1}
    a\leq 2n+1,
\end{equation} 
and $J_2$ has a free resolution $\mc{F}_{\bigcdot}''$ with terms
\begin{align} \notag
F_1'' &= R(-2) \oplus R(-(a-n-1))^2 \oplus R(-(a-n+1)) \oplus R(-a) \oplus R(-(a+2)) \\ \label{case 2 a1 2}
F_2'' &= R(-(a-n))^2 \oplus R(-(a-n+2)) \oplus R(-(a+1)) \\ \notag
&\oplus R(-(a+4))^2 \oplus R(-(a+n+2)) \oplus R(-(2(a-n-1))) \\ \notag 
F_3'' &= R(-(a+5)) \oplus R(-(a+n+3)) \oplus R(-(2(a-n))).
\end{align} 
Notice that $R(-(a+b-2n-2))$ does not appear in the resolution, since to get a contradiction we may apply Lemma \ref{ci shift trims}. 
\par We now claim $g_2(J_2)=a-n-1$. As long as $a\neq n+5$, this is true by Lemma \ref{g1q1}. If $a=n+5$ and $g_2(J_2)> a-n-1=4$, then after substituting $a=n+5$ into (\ref{case 2 a1 2}) we see that there is a complete intersection $C_2\subseteq J_2$ with type $(2,6,n+7)$ such that the quadric yields a minimal Koszul relation with the other two generators. Then by Proposition \ref{min Koszul relation}.ii, the link $J_3=C_2:J_2$ has a resolution 
\[ 0\to \begin{matrix} R(-10) \\ \oplus \\ R(-(n+11))^2 \end{matrix} \to \begin{matrix} R(-(8-n)) \\ \oplus \\ R(-6) \\ \oplus \\ R(-9) \\ \oplus \\ R(-(n+8)) \\ \oplus \\ R(-(n+10))^2 \end{matrix} \to \begin{matrix} R(-2) \\ \oplus \\ R(-(7-n)) \\ \oplus \\ R(-5) \\ \oplus \\ R(-(n+5)) \end{matrix} \to J_3 \to 0. \]
Applying the Koszul argument to the quadric and degree $n+5$ generator of $J_3$, we get $n=4$ and the term $R(-(n+5))$ in step 1 trims with $R(-9)$ in step 2. Then $J_3$ is a complete intersection of type $(2,7-n=3,5)$, which is impossible by Lemma \ref{max socle shift} since $\beta_{2,15}(R/J_3)=0$. Therefore, $g_2(J_2) = a-n-1$. 
\par Now let $C_2\subseteq J_2$ be a complete intersection with type $(2,a-n-1,a+2)$. By Lemma \ref{max socle shift}, since $J_2$ cannot be a complete intersection, $2a-n+3 \geq a+n+4$, so $a\geq 2n+1$. By (\ref{a > 2n+1}), we get $a=2n+1$. Thus the link $J_3=C_2:J_2$ has a presentation $F_2''' \to F_1'''\to J_3 $ where
\begin{align} \notag
F_1''' &= R(-1) \oplus R(-2) \oplus R(-(n-1))\oplus R(-n) \oplus R(-(n+3)) \oplus R(-(2n+3)), \\ \notag
F_2''' &= R(-2) \oplus R(-n)^2 \oplus R(-(n+3)) \oplus R(-(n+5)) \oplus R(-(2n+2)) \oplus R(-(2n+4))^2. \notag
\end{align}
Since $\sum_{j<2} \beta_{1,j}(R/J_3)=1$, the term $R(-2)$ in steps 1 and 2 trims. Since the linear form and degree $n-1$ generator contribute only one minimal syzygy to $J_3$, the terms $R(-n)$ trim. Moreover, the Koszul argument shows that $R(-(n+3))$ trims. Therefore, $J_3$ is a complete intersection of type $(1,n-1,2n+3)$ with $\beta_{3,3n+3}(R/J_3)=0$, which is impossible by Lemma \ref{max socle shift}. \\

\noindent \textbf{Case 3:} $b>g_3(J_1)=a+b-2n-5\geq a$. By this inequality and Lemma \ref{max socle shift},
\begin{equation} \label{ a < 2n+4}
n+5\leq a\leq 2n+4, 
\end{equation} 
and $J_2$ has a free resolution $\mc{F}_{\bigcdot}''$ with terms
\begin{align} \notag
F_1'' &= R(-2) \oplus R(-(a-n-4))^2 \oplus R(-(a-n-2)) \oplus R(-(a-1)) \\ \notag 
&\oplus R(-a) \oplus R(-(a+b-2n-5)) \\ \label{F2 element}
F_2'' &= R(-(a-n-3))^2 \oplus R(-(a-n-1)) \oplus R(-(a-2)) \oplus R(-(a+1))^2 \\ \notag
&\oplus R(-(a+n-1)) \oplus  R(-(2a-2n-5)) \oplus R(-(a+b-2n-5)) \\ \notag
F_3'' &= R(-(2a-2n-3)) \oplus R(-(a-1)) \oplus R(-(a+n)).
\end{align}  
If $R(-(a+b-2n-5))$ does not trim, then by Lemma \ref{ci shift trims}, the degree $a$ generator of $C_1$ is not a weak associate of the degree $a$ generator of $C$. Since $b\geq 2n+6$ and the degree $a$ generator of $C_1$ must be a minimal generator of $J_1$, this is only possible if $a+b-3n-3=a$, which means $b=3n+3$. Substituting $b=3n+3$ into (\ref{J1 res res}), we can write $(J_1)_{\leq a}=(q,f_1,f_2)$ where $\deg q=2$ and $\deg f_i=a$ for $i=1,2$. Let $\{e_1,e_2,e_3\}$ be part of a basis for $F_1'$ that corresponds to $q,f_1,f_2$, respectively. Since the degree $b$ generator of $C$ cannot be minimal in $J$, we may assume $q,f_1$ yield a minimal Koszul relation in $J_1$. Then $\text{Syz}_1(R/J_1)_{\leq a+2}$ can be generated by two syzygies $s_1$ and $s_2$, where $$s_1=ge_1-\ell_1 e_2-\ell_2 e_3$$ and $$ s_2 = f_1e_1-qe_2.$$ Then
$$ \text{Syz}_1(R/J_1)_{\leq a+2}\ni f_2 e_1 - qe_3 = \ell s_1 + u s_2,$$
where $\ell$ is a linear form and $u$ is a unit in $R$. Then 
$$ q = \ell \ell_2 = -u \ell \ell_1,$$
and $g\ell = f_2 - u^{-1}f_1$. Thus if $f_2'=f_2-u^{-1}f_1$, then
\[ (J_1)_{\leq a}=(q,f_1,f_2'),\]
and $\ell_2 \in q:f_2'$. Thus we conclude that the degree $a$ generator of $C_1$ is a weak associate of the degree $a$ generator of $C$, which implies $R(-(a+b-2n-5))$ trims in (\ref{F2 element}). 
\par Now if $a=n+5$, then $J_2$ contains two linearly independent linear forms, and therefore must be a complete intersection. However, (\ref{F2 element}) cannot trim to a resolution describing a complete intersection. Thus $a\geq n+6$. We can now apply Lemma \ref{g1q1} to get $g_2(J_2)=a-n-4$, so $J_2$ contains a complete intersection $C_2$ with type $(2, a-n-4, a)$. The link $J_3=C_2:J_2$ has a free resolution $\mc{F}_{\bigcdot}'''$ with terms
\begin{align} \notag
F_1''' &= R(-2) \oplus R(-(a-2n-2)) \oplus R(-(a-n-1)) \oplus R(-(n+1)) \\ \notag
&\oplus R(-(a-n-4)) \oplus R(-a) \\ \notag 
F_2''' &= R(-(a-2n-1)) \oplus R(-(a-n-3))^2 \oplus R(-(a-n))\oplus R(-(a-1)) \\ \notag
& \oplus R(-(a+1))^2 \oplus R(-(n+3)) \\ \notag
F_3''' &= R(-(a-n-1)) \oplus R(-a) \oplus R(-(a+2)).
\end{align}
Applying Lemma \ref{max socle shift} to $\text{type}(C_2)$ in $J_2$, we have $a\geq 2n+3$. If $a=2n+3$, the Koszul argument yields a contradiction when considering the generators of degrees $a-2n-2=1$ and $n+1$. Thus by (\ref{ a < 2n+4}), we have $a=2n+4$. Lemma \ref{g12l} implies $R(-(n+1))$ trims in steps 1 and 2 of $\mc{F}_{\bigcdot}'''$, which implies $R(-(2a-2n-3))$ and $R(-(a+1))$ trim in (\ref{F2 element}), which implies the summand $R(-b)$ trims with $R(-(a+b-2n-4))$ in (\ref{J1 res res}). Thus the quadric and degree $a$ generator of $C$ yield a minimal Koszul relation in $J$. \\

To summarize, Cases 1 and 2 are impossible, and Case 3 only happens when the quadric and degree $a$ generator of $C$ yield a minimal Koszul relation in $J$. Moreover, in this case, $a=2n+4$. 
\end{proof}


\begin{lem} \label{Koszul trim}
Fix $n\geq 4$ and let $J$ be an ideal satisfying $(\star)$. Further, let $C\subseteq J$ be a grade $3$ complete intersection of type $(2, 2n+4, b)$, where $b\geq 2n+6$, and such that the quadric and degree $2n+4$ generators of $C$ yield a minimal Koszul relation in $J$. Then any sequentially bounded double link of $J$ with respect to $(2,2n+4,b)$ satisfies $(\star)$ or $(\vardiamond)$. 
\end{lem}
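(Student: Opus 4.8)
The plan is to follow the template of Lemmas \ref{ghost g2} and \ref{ghost g3} and of Proposition \ref{minimal link of I}: first resolve the intermediate link $J_1=C:J$, then determine its minimal type, enumerate the admissible second links, and check each one against the Betti tables $(\star)$ and $(\vardiamond)$. Since $J$ satisfies $(\star)$ its minimal resolution is (\ref{J res}), and $\beta_{1,2}(R/J)=1$ forces the quadric of $C$ to be a minimal generator of $J$. The hypothesis that the quadric and the degree-$(2n+4)$ generator of $C$ yield a minimal Koszul relation in $J$ means, by the analysis of unit entries in Section 2, that the degree-$(2n+4)$ generator of $C$ is also a minimal generator of $J$ while its degree-$b$ generator lies in $\mf{m}J$. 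Applying Theorem \ref{Ferrand} to $C\subseteq J$ and trimming the unit entries guaranteed by Proposition \ref{min Koszul relation} — two rows of $\partial_3$ coming from the two minimal generators, one row of $\partial_2$ coming from the minimal Koszul relation — produces an explicit (a priori non-minimal) resolution of $R/J_1$ whose terms I would write out in $n$ and $b$.

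Next I would pin down the grade jumps of $J_1$. The quadric survives, so $g_1(J_1)=2$; since every element of $J_1$ of degree below the second generating degree is a multiple of the quadric, Lemma \ref{g1q1} should give $g_2(J_1)$, and Lemma \ref{max socle shift}, applied to the maximal shift in the trimmed resolution, should bound $g_3(J_1)$. I expect the minimal type of $J_1$ to be $(2,\,\min\{b-n+1,\,2n+4\},\,\max\{b-n+1,\,2n+4\})$, so in particular $g_3(J_1)<b$ when $n\geq 4$, in agreement with the converse situation of Lemma \ref{deduce trim}. Ruling out the smaller hypothetical types would proceed exactly as in the other proofs: assume such a type, link again, and derive a contradiction with Lemma \ref{max socle shift}, with Lemma \ref{max degree relations}, or with the projective dimension being three (recall Remark \ref{artinian rem}).

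With the minimal type of $J_1$ in hand, any complete intersection $D\subseteq J_1$ with $\text{type}(D)\leq(2,2n+4,b)$ has type $(2,c_2,c_3)$ with $g_2(J_1)\leq c_2\leq 2n+4$ and $g_3(J_1)\leq c_3\leq b$, since $J_1$ contains no linear form. Every degree-$2$ element of $J_1$ is a weak associate of the quadric, and the Koszul argument shows that the unique minimal Koszul relation of $J_1$ is the one between the quadric and the degree-$(2n+4)$ minimal generator; the crucial point is that $D$ is then forced to contain weak associates of both of these, so that exactly one minimal Koszul relation is used in the link $J_2=D:J_1$. Equivalently $d(J_2)=2$ and $J_2$ has five generators, so Ferrand's resolution of $J_2$ already has the correct total Betti numbers. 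Which of $(\star)$ or $(\vardiamond)$ it matches is then governed by the number $k$ of generators of $D$ that are themselves minimal generators of $J_1$: $k=2$ yields $(\star)$ and $k=3$ yields $(\vardiamond)$. For each admissible pair $(c_2,c_3)$ I would substitute into Ferrand's resolution of $J_2$, use Lemma \ref{ci shift trims} and the Koszul argument to confirm that precisely the intended ghost terms trim, and read off the Betti table.

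The easy cases go through as above, but the boundary values of $b$ — such as $2n+6$, $2n+7$, $3n+3$, $3n+4$, $3n+5$, together with the smallest case $n=4$ — produce collisions among ghost terms whose trims are invisible from a single mapping cone. As in Lemmas \ref{ghost g2} and \ref{ghost g3}, the hard part will be handling these by tracing a unit entry back through two successive applications of Ferrand's mapping cone (bringing in Lemma \ref{linear Koszul} whenever a linear form appears in a further link) and appealing to the ghost-trimming analysis of Appendix B; this bookkeeping, together with excluding the spurious complete intersection types that would force impossible resolutions, is where essentially all of the work sits.
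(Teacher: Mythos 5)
Your overall template — resolve $J_1=C:J$ by Ferrand, determine its grade jumps, enumerate the admissible second links, and match Betti tables — is the same one the paper uses. But your prediction for the minimal type of $J_1$ is wrong, and it is exactly that prediction the paper's proof has to work to establish.

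After the trims coming from the two minimal generators $q$ (degree $2$) and $f$ (degree $2n+4$) of $C$ and from their minimal Koszul relation, the generator degrees of $J_1$ are $\{2,\,b-n+1,\,2n+4,\,b-1,\,b+2\}$ with first syzygy degrees $\{b-n+2,\,b,\,b+3,\,b+n+2,\,(b+n+4)^2,\,2n+6\}$. You expect the minimal type to be $(2,\min\{b-n+1,2n+4\},\max\{b-n+1,2n+4\})$, i.e.\ $g_3(J_1)\leq\max\{b-n+1,2n+4\}$. The paper proves the opposite: the sub-ideal $(J_1)_{\leq \max\{b-n+1,\,2n+4\}}$ has grade at most $2$, so $g_3(J_1)=b-1$ and the minimal type is $(2,2n+4,b-1)$. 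Two separate things go wrong in your guess. First, the degree-$(b-n+1)$ generator of $J_1$ admits a linear syzygy of degree $b-n+2$ against the quadric alone; this forces a common linear factor with $q$, so no degree-$(b-n+1)$ element of $J_1$ can be part of a grade-two subideal with $q$, i.e.\ $g_2(J_1)\neq b-n+1$. Second, even once you know $g_2(J_1)=2n+4$, you need to rule out a complete intersection of type $(2,2n+4,b-n+1)$ (equivalently, grade $3$ already below degree $b-1$); the paper does this by linking $J_1$ with such a hypothetical $C'$, applying Lemma~\ref{ci shift trims} and then contradicting Lemma~\ref{max degree relations}. This is not a boundary case to be swept into "collisions among ghost terms" — it is the main content of the lemma.

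The consequence of getting the minimal type right is that there are only \emph{two} admissible second links, namely types $(2,2n+4,b-1)$ and $(2,2n+4,b)$, rather than the range $(2,c_2,c_3)$ with $c_2\leq 2n+4$ you propose to enumerate. With the correct minimal type the paper then computes Ferrand's resolution of $J_2$ directly: for type $(2,2n+4,b-1)$ the only discrepancy with the $(\vardiamond)$ Betti table sits in $\beta_{i,b-1}$ for $i=1,2$, and Lemma~\ref{max degree relations} kills it; the same computation with type $(2,2n+4,b)$ gives $(\star)$. Your heuristic dichotomy "$k=2$ yields $(\star)$, $k=3$ yields $(\vardiamond)$" is consistent with these two outcomes, but you cannot reach them without first correcting $g_2(J_1)$ and $g_3(J_1)$, and the reliance on a "unique minimal Koszul relation of $J_1$" is both unverified and unnecessary for the paper's argument.
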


\begin{proof}
The quadric and degree $2n+4$ generators of $C$ must form part of a minimal generating set of $J$ since they yield a minimal Koszul relation in $J$. Then by Lemma \ref{min Koszul relation}, the link $J_1=C:J$ has a free resolution $\mc{F}_{\bigcdot}'$ with terms
\begin{align} \notag
F_1' &= R(-2) \oplus R(-(b-n+1)) \oplus R(-(b-1)) \oplus R(-(b+2)) \oplus R(-(2n+4)) \\ \notag
F_2' &= R(-(b-n+2)) \oplus R(-b) \oplus R(-(b+3)) \\ \notag
&\oplus R(-(b+n+2)) \oplus R(-(b+n+4))^2 \oplus R(-(2n+6)) \\ \notag
F_3' &= R(-(b+n+3)) \oplus R(-(b+n+5))^2  
\end{align}
Note that $b-1\geq 2n+5$. Moreover, if $(J_1)_{\leq \max \{b-n+1, 2n+4\} }$ has grade $3$, then $J_1$ contains a regular sequence of degrees $2$, $2n+4$, and $b-n+1$. In particular, using the quadric and degree $2n+4$ generators of $C$, we may complete this regular sequence with a degree $b-n+1$ generator of $J_1$. Then by Lemma \ref{ci shift trims}, link of $J_1$ with respect to this regular sequence yields a contradiction with Lemma \ref{max degree relations}. Thus $(2, 14, b-1)$ is the minimal type in $J_1$. 
\par If $J_2$ is a minimal link of $J_1$, then using Ferrand's resolution of $J_2$, it is easily checked that the only difference in the Betti table of $J_2$ and the Betti table of an ideal satisfying $(\vardiamond)$ is the value $\beta_{i,b-1}(R/J_2)$ for $i=1,2$. However, this graded Betti number is $0$ by Lemma \ref{max degree relations}. The same argument applies to show that any link of $J_1$ with respect to $(2, 2n+4, b)$ satisfies $(\star)$. 
\end{proof}



\begin{prop} \label{with quadratic}
Fix $n\geq 4$ and let $J$ be an ideal satisfying $(\star)$. Then any sequentially bounded double link of $J$ with respect to $(2,a,b)$, with $b\geq 2n+6$, satisfies $(\star)$ or $(\vardiamond)$. 
\end{prop}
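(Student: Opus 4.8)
The plan is to split on whether the quadric of the first complete intersection takes part in a minimal Koszul relation in $J$. Fix $C\subseteq J$ a complete intersection of type $(2,a,b)$ with $b\geq 2n+6$, and put $J_1=C:J$; since $g_2(J)=n+1$ and $g_3(J)=2n+4$ (Lemmas \ref{g1q1} and \ref{max socle shift}), we have $a\geq n+1$, the proof of Lemma \ref{deduce trim} gives $g_2(J_1)=a$, and $J_1$ contains no linear form. Suppose first that the quadric and the degree $a$ generator of $C$ yield a minimal Koszul relation in $J$. Then both are minimal generators of $J$, so the Koszul syzygy sits in degree $a+2$; since $a$ is a minimal generator degree of $J$ we have $a\in\{n+1,n+3,2n+4\}$, and since $a+2$ must be a syzygy degree appearing in (\ref{J res}) we have $a+2\in\{n+2,n+4,2n+3,2n+6,3n+4\}$, so for $n\geq 4$ the only possibility is $a=2n+4$, as is also recorded in the proof of Lemma \ref{deduce trim}. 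Then Lemma \ref{Koszul trim}, applied to $C$ of type $(2,2n+4,b)$ whose quadric and degree $2n+4$ generator yield a minimal Koszul relation in $J$, shows that every sequentially bounded double link of $J$ with respect to $(2,a,b)=(2,2n+4,b)$ satisfies $(\star)$ or $(\vardiamond)$, and this case is finished.

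Assume henceforth that no two generators of $C$ yield a minimal Koszul relation in $J$; by the degree count the only candidate pair is the quadric together with a degree $2n+4$ generator (any other pair would need a minimal generator of $J$ of degree $b\geq 2n+6>2n+4$), and it has just been excluded. So $C$ avoids the minimal Koszul relations of $J$, hence $d(J_1)=r(R/J)=3$, and by the contrapositive of Lemma \ref{deduce trim} we get $g_3(J_1)=b$; therefore the minimal type of $J_1$ is $(2,a,b)$. Any complete intersection $D\subseteq J_1$ with $\text{type}(D)\leq(2,a,b)$ thus has $\text{type}(D)=(2,a,b)$, so any sequentially bounded double link $J_2$ of $J$ with respect to $(2,a,b)$ is a ghost double link of $J$ with respect to $(2,a,b)$; consequently a free resolution of $J_2$ is obtained from a minimal free resolution of $J$ by adjoining ghost terms $R(-2)$, $R(-a)$, $R(-b)$ in homological degrees $1$ and $2$, and the job is to show that these trim, which forces $J_2$ to satisfy $(\star)$.

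The quadric of $C$ is a minimal generator of $J$ (degree $2$, and $\mf{m}J$ begins in degree $3$), so $R(-2)$ trims by Lemma \ref{ci shift trims}.i. For $R(-a)$ and $R(-b)$ I first note that no generator of $C$ lies in $\mf{m}J_1$: if the degree $a$ generator did, Corollary \ref{construct Koszul} would force the quadric and the degree $b$ generator into a minimal Koszul relation in $C:J_1=J$, impossible since $b>2n+4$; and if the degree $b$ generator did, the quadric and the degree $a$ generator would yield a minimal Koszul relation in $J$, contrary to our case. Hence Ferrand's resolution of $J_1$ has no unit entry in its $\partial_2$ map, so the first graded Betti numbers of $J_1$ can be read directly from (\ref{J1 res res}): outside a few numerical configurations of $a,b$ relative to $n$ one has $\beta_{1,2}(R/J_1)=\beta_{1,a}(R/J_1)=\beta_{1,b}(R/J_1)=1$ and, using Corollary \ref{construct Koszul} together with the fact that $J_2$ is a ghost double link of $J$, the degree $a$ and degree $b$ generators of $D$ also avoid $\mf{m}J_1$; in that generic situation the degree $a$ (resp. degree $b$) generators of $C$ and of $D$ are weak associates in $J_1$, and Lemma \ref{ci shift trims}.ii makes $R(-a)$ and $R(-b)$ trim, so $J_2$ satisfies $(\star)$.

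The exceptional configurations — those in which $R(-a)$ or $R(-b)$ coincides with one of the summands $R(-(a+b-3n-3))$, $R(-(a+b-2n-5))$, $R(-(a+b-2n-2))$ of (\ref{J1 res res}) (for instance $a=b$, $b=3n+3$, or $a\in\{2n+2,2n+5,3n+3\}$), or in which a generator of $D$ can slip into $\mf{m}J_1$ — are the technical heart, and the step I expect to be the main obstacle. There the weak-associate argument breaks down, and the plan is to rule out each configuration by the iterated-link technique used throughout this section: assume the configuration, apply Lemma \ref{g1q1} to locate a complete intersection of smaller type inside a link of $J_1$, link once or twice more, and track the unit entries of the successive mapping-cone maps (via Proposition \ref{min Koszul relation} and Lemma \ref{ci shift trims}) back along the chain until one reaches a free resolution that violates Lemma \ref{max socle shift}, Lemma \ref{max degree relations}, or the bound $\operatorname{pd}(R/I)\leq 3$ (cf. Remark \ref{artinian rem}) — exactly the bookkeeping carried out in the proofs of Lemmas \ref{ghost g2} and \ref{ghost g3}. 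Keeping that bookkeeping under control, and confirming that in every exceptional configuration the ghost terms nevertheless trim (or that the configuration cannot arise), is where the real work lies.
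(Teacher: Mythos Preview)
Your overall structure matches the paper's: split on whether the quadric and degree-$a$ generator of $C$ yield a minimal Koszul relation in $J$, dispatch that branch with Lemma \ref{Koszul trim}, and in the remaining branch recognise $J_2$ as a ghost double link and show the ghost terms trim. But the proposal is not a proof: you explicitly leave the ``exceptional configurations'' as a plan, and those configurations are exactly where the content lies. Moreover, your treatment of the generic case is more laborious than necessary and has a loose step (you assert, via a vague appeal to Corollary \ref{construct Koszul}, that the degree-$a$ and degree-$b$ generators of $D$ avoid $\mf{m}J_1$, but what you actually need is that they are weak associates of those of $C$, and for that you must argue they are minimal generators of $J_1$ --- true, but it requires a sentence).

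The paper's argument is shorter and avoids the weak-associate route for $R(-b)$ entirely. Since $b\geq 2n+6$, in the ghost Betti table of $J_2$ the only second-syzygy shift above $b$ is $3n+4$, so $\sum_{j>b}\beta_{2,j}(R/J_2)\leq 1$ while $(J_2)_{<b}$ already has grade $3$; Lemma \ref{max degree relations} kills $\beta_{1,b}(R/J_2)$ in one line. For $R(-a)$ the paper uses Lemma \ref{ci shift trims} exactly as you would, reducing to $a\in\{2n+2,2n+5,3n+3\}$; the value $3n+3$ again falls to Lemma \ref{max degree relations}. The remaining two values are handled not by tracing units back through a chain of links (as in Lemmas \ref{ghost g2}--\ref{ghost g3}) but by a single further link of $J_2$ via a complete intersection of type $(2,n+1,2n+4)$ and a direct contradiction read off the resulting Betti table using Lemma \ref{g12l} and Lemma \ref{relations from socle} (a would-be socle generator forces too few linear syzygies), together with a short Koszul-argument contradiction in the other case. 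These are brief, targeted computations; you should carry them out rather than defer them.
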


\begin{proof}
Let $C\subseteq J$ be a complete intersection of type $(2,a,b)$ with $b\geq 2n+6$, and set $J_1=C:J$. Since the only minimal Koszul relation in $J$ is between the quadric and degree $2n+4$ generator, we may assume by Lemmas \ref{deduce trim} and \ref{Koszul trim} that $(2,a,b)$ is the minimal type in $J_1$. Let $C_1\subseteq J_1$ be a complete intersection with type $(2,a,b)$, and set $J_2=C_1:J_1$. We show $J_2$ satisfies $(\star)$. Since $J_2$ is a ghost double link of $J$, 
\[ \beta_{i,j}(R/J)\leq \beta_{i,j}(R/J_2) \leq  \beta_{i,j}(R/J)+1 \text{ for } 1\leq i\leq 2 \text{ and } j\in\{a,b\},\] 
and $\beta_{i,j}(R/J_2)=\beta_{i,j}(R/J)$ otherwise. However, since $b\geq 2n+6$, Lemma \ref{max degree relations} implies $\beta_{i,b}(R/J_2)=\beta_{i,b}(R/J)=0$ for $i=1,2$. To show the same is true for $a$, By Lemma \ref{ci shift trims}, we may assume the degree $a$ generator of $C$ is not minimal in $J$, and the degree $b$ generator of $C_1$ is not a weak associate of the degree $b$ generator of $C$ in $J_1$. Since $g_3(J_1)=b$, this is only possible if $b\in \{a+b-2n-2, a+b-2n-5, a+b-3n-3\}$, which implies $a\in \{2n+2, 2n+5, 3n+3\}$. If $a=3n+3$, then again Lemma \ref{max degree relations} implies $\beta_{i,a}(R/J_2)=\beta_{i,a}(R/J)$ for $i=1,2$. Thus we may assume either $a=2n+4$ or $a=2n+2$. 
\par Assume $a=2n+4$ and $\beta_{i,2n+4}(R/J_2) = \beta_{i,2n+4}(R/J)+1$ for $i=1,2$. Then $\beta_{1,2n+4}(R/J_2)=2=\beta_{2,2n+6}(R/J_2)$, and since $\beta_{2,2n+5}(R/J_2)=0$, it follows that for any choice of a minimal generating set of $J_2$, the quadric yields a minimal Koszul relation in $J_2$ with both minimal generators of degree $2n+4$. In particular, there is a complete intersection $C_2\subseteq J_2$ with type $(2,2n+4,2n+4)$ such that both minimal Koszul relations in $J_2$ appear from the generators of $C_2$. Using Ferrand's resolution of $J_3=C_2:J_2$ and Proposition \ref{min Koszul relation}.ii, we have $\beta_{1,2n+3}(R/J_3)=1$ and $\beta_{2,j}(R/J_3)=0$ for $2n+3\leq j\leq 2n+5$, which is impossible by the Koszul argument (recall Definition \ref{Koszul argument}). Therefore, $\beta_{i,2n+4}(R/J_2) = \beta_{i,2n+4}(R/J)$ for $i=1,2$, so $J_2$ satisfies $(\star)$.
\par Assume $a=2n+2$ and $\beta_{i,2n+2}(R/J_2) = \beta_{i,2n+2}(R/J)+1=1$ for $i=1,2$. Then $g_2(J_2)=n+1$ by Lemma \ref{g1q1}.i, so there is a complete intersection $C_2\subseteq J_2$ with type $(2,n+1,2n+4)$. If $J_3=C_2:J_2$, the only difference between the Betti table of $J_3$ and the Betti table of an ideal with resolution (\ref{I res}) are the values
\[ \beta_{i,n+1}(R/J_3)\leq 1 \text{  for  } 1\leq i\leq 2\]
and 
\[ \beta_{i,n+5}(R/J_3)\leq 1 \text{  for  } 2\leq i\leq 3.\] 
The values of these graded Betti numbers for an ideal with resolution (\ref{I res}) are all $0$. However, $\beta_{2,j}(R/J_3)=0$ for $n+2\leq j\leq n+3$, so $\beta_{i,n+1}(R/J_3)=0$ for $1\leq i\leq 2$ by the Koszul argument. Now by Lemma \ref{g12l}, the sub-ideal $(J_3)_{\leq n+1}$ is a grade $2$ perfect ideal. Therefore, if $\beta_{3,n+5}(R/J_3)=1$, the degree $n+3$ minimal generator of $J_3$ is a linear multiple of a socle generator for $J_3$. But $\beta_{2,n+4}(R/J_3)=1$, which contradicts Lemma \ref{relations from socle}. Hence $\beta_{i,n+5}(R/J_3)=0$ for $2\leq i\leq 3$, which by Proposition \ref{min Koszul relation} is only possible if $\beta_{i,2n+2}(R/J_2)=0$ for $1\leq i\leq 2$. Therefore, $J_2$ satisfies $(\star)$.

\end{proof}

\section{Monomial Ideals with the Same Hilbert Function}

In this final section we consider the Artinian case and use Macaulay's inverse system and linkage to find for every ideal satisfying $(\star)$ a licci monomial ideal with the same Hilbert function. This provides further evidence for a positive answer to Question \ref{Monomial HF Q}. We refer the reader to \cite{GHMS07} (Chapter 5) for further background in Macaulay's inverse systems.  

\par Let $R=k[x_1,\dots, x_N]$ with homogeneous maximal ideal $\mf{m}$ as before, and let $T=k[X_1,\dots, X_N]$. There is a \textit{contraction} operation which, on the monomials of $R$ and $T$, takes the form $$x_1^{a_1}\dots x_N^{a_N} \circ X^{b_1}\dots X_N^{b_N} = X_1^{b_1-a_1}\dots X_{N}^{b_N-a_N},$$ where $X_i^{a_i}=0$ if $a_i<0$. This operation induces a one-to-one correspondence
\[ \{ R\text{-ideals} \} \leftrightarrow \{ R\text{-submodules of } T\} \]
where an $R$-ideal $I$ corresponds to the $R$-submodule 
\[ I^{-1} := \{ G\in T: f\circ G=0 \text{ for all } f\in I\}. \]
Conversely, given an $R$-submodule $M$ of $T$, if $I=\text{ann}_R(M)$, then $I^{-1}=M$. If $\{ m_{\alpha} \}$ is a generating set of $M$, then we write $M=\left< m_{\alpha} \right>_R$ to denote that the action on $M$ is from $R$. 

\begin{prop}[\cite{GHMS07}] \label{HF from dual}
With the set-up above, let $I$ be a homogeneous $R$-ideal. 
\begin{enumerate}[i.]
\item The inverse system $I^{-1}$ is a finitely generated $R$-submodule of $T$ if and only if $I$ is $\mf{m}$-primary, in which case the size of a minimal generating set of $I^{-1}$ is the CM type of $I$. 
\item For all $j$,
\[ \dim_k (I^{-1})_j = \dim_k (R/I)_j =: \text{HF}_{R/I}(j) .\]
\end{enumerate}
\end{prop}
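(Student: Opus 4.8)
To prove Proposition~\ref{HF from dual}, the plan is to establish part (ii) first by a duality argument in linear algebra, and then deduce part (i) from it together with an identification of $I^{-1}$ with a Matlis dual.

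\textbf{Part (ii).} In each degree $d$, contraction restricts to a bilinear map $R_d\times T_d\to T_0=k$, which on monomial bases is $\langle x^{a},X^{b}\rangle=\delta_{a,b}$; in particular it is a perfect pairing, and $x^{a}\circ X^{b}=0$ whenever $\deg x^{a}>\deg X^{b}$. The first step is to identify $(I^{-1})_d$ with the orthogonal complement $I_d^{\perp}\subseteq T_d$ of $I_d$. One inclusion is immediate from the definition. For the other, let $G\in T_d$ be annihilated by $I_d$; for a homogeneous $f\in I$ of degree $e$ the element $f\circ G\in T_{d-e}$ vanishes automatically when $e>d$, vanishes by hypothesis when $e=d$, and for $e<d$ satisfies $h\circ(f\circ G)=(hf)\circ G=0$ for every $h\in R_{d-e}$ since $hf\in I_d$, hence $f\circ G=0$ by nondegeneracy of the pairing in degree $d-e$. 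So $G\in I^{-1}$. Perfectness now yields $\dim_k(I^{-1})_d=\dim_k T_d-\dim_k I_d=\dim_k R_d-\dim_k I_d=\dim_k(R/I)_d$, proving (ii).

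\textbf{Part (i).} First, a graded $R$-submodule $M\subseteq T$ is finitely generated if and only if $\dim_k M<\infty$: homogeneous generators $m_1,\dots,m_s$ of degrees $d_1,\dots,d_s$ force $M=\sum_i R\circ m_i\subseteq\bigoplus_{j\le\max_i d_i}T_j$, which is finite-dimensional, and the converse is trivial. With (ii) this gives: $I^{-1}$ is finitely generated $\iff\dim_k I^{-1}=\sum_j\dim_k(R/I)_j<\infty\iff R/I$ is Artinian $\iff I$ is $\mf{m}$-primary. For the number of generators, identify $T$ with the graded $k$-dual $\bigoplus_d\Hom_k(R_d,k)$ of $R$ via the pairings above; under this identification contraction becomes the dual-module action $(f\cdot\varphi)(g)=\varphi(fg)$, and $I^{-1}=\{\varphi:\varphi(I)=0\}$ is carried onto $\Hom_k(R/I,k)$, the Matlis dual of $R/I$ (an ordinary $k$-dual, as $R/I$ is finite-dimensional in this case). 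Applying the exact contravariant functor $\Hom_k(-,k)$ to $I^{-1}\twoheadrightarrow I^{-1}/\mf{m}I^{-1}$ identifies $(I^{-1}/\mf{m}I^{-1})^{\vee}$ with $\Hom_R(k,R/I)=\operatorname{Soc}(R/I)$, so $\mu(I^{-1})=\dim_k\operatorname{Soc}(R/I)$; since $R/I$ is Artinian this socle dimension equals the Cohen--Macaulay type $r(R/I)=\beta_N(R/I)$, finishing (i).

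\textbf{Where the work is.} The pairing, the dimension count, and the finiteness equivalence are routine. The step needing care is the end of (i): one must verify that contraction is genuinely the dual-module structure, so that $I^{-1}$ really is the Matlis dual of $R/I$, and then invoke the standard facts that the minimal number of generators of the Matlis dual of an Artinian ring is its socle dimension and that this, in turn, is the Cohen--Macaulay type (equivalently the top total Betti number) when the ring is Cohen--Macaulay. If one wishes to avoid Matlis duality altogether, the same count follows directly: the images of a minimal homogeneous generating set of $I^{-1}$ form a $k$-basis of $I^{-1}/\mf{m}I^{-1}$, which the contraction pairing identifies with the $k$-dual of $\operatorname{Soc}(R/I)$.
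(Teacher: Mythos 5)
The paper does not prove Proposition~\ref{HF from dual}; it is imported from \cite{GHMS07} (Chapter 5) as a known fact, so there is no in-paper proof to compare against. Your argument is correct and is essentially the standard derivation: the perfect degreewise pairing identifies $(I^{-1})_d$ with $I_d^{\perp}$, giving (ii); finiteness of $I^{-1}$ then reduces to $\dim_k R/I<\infty$; and the identification of $I^{-1}$ with the graded $k$-dual of $R/I$ yields $\mu(I^{-1})=\dim_k\operatorname{Soc}(R/I)$, which is the CM type for an Artinian quotient of the polynomial ring. One point worth being careful about, which you handle correctly, is the reverse inclusion $I_d^{\perp}\subseteq(I^{-1})_d$: one must check that annihilation by $I_d$ forces annihilation by all homogeneous elements of $I$ of lower degree, and your use of nondegeneracy in degree $d-e$ via $h\circ(f\circ G)=(hf)\circ G$ is exactly the right argument.
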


The following lemma shows an interesting connection between Macaulay's inverse system and linkage.

\begin{lem} \label{dual generators from link}
Let $I$ be an $\mf{m}$-primary $R$-ideal generated by $\{f_1,\dots, f_n\}$ and let $C\subseteq I$ be a Gorenstein ideal with $C^{-1} = \left< G \right>_R$. Then 
\[ (C:I)^{-1} = \left< f_1\circ G,\dots, f_n\circ G \right>_R.\]
\end{lem}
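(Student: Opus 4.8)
The plan is to compute $(C:I)^{-1}$ directly from the definition of the inverse system, using two features of the setup: that $C$ is Gorenstein (so $C^{-1}$ is cyclic, generated by a single form $G$), and the elementary adjunction property of the contraction action, namely $(fg)\circ H = f\circ(g\circ H)$ for $f,g \in R$ and $H \in T$. First I would record the key translation: for an ideal $\mathfrak{a}$ and an element $h \in R$, the colon ideal can be read off the inverse system as $(\mathfrak{a}:h)^{-1} = h \circ (\mathfrak{a}^{-1})$. This is because $g \in (\mathfrak{a}:h)$ iff $gh \in \mathfrak{a}$ iff $(gh)\circ H = 0$ for all $H \in \mathfrak{a}^{-1}$ iff $g \circ (h \circ H) = 0$ for all $H \in \mathfrak{a}^{-1}$, i.e. iff $g$ annihilates $h \circ \mathfrak{a}^{-1}$. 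Applying this with $\mathfrak{a} = C$ and iterating over a generating set, one gets $(C : I)^{-1} = (C : (f_1,\dots,f_n))^{-1} = \bigcap_{i} (C : f_i)^{-1}$; the subtlety is that colon distributes over the generators as an intersection, so I must check that the inverse system of an intersection of ideals is the sum of the inverse systems, $(\mathfrak{a}_1 \cap \mathfrak{a}_2)^{-1} = \mathfrak{a}_1^{-1} + \mathfrak{a}_2^{-1}$. This is a standard fact about the Macaulay duality (it is the Galois-connection reversal of $\mathrm{ann}(M_1 + M_2) = \mathrm{ann}(M_1)\cap \mathrm{ann}(M_2)$), and I would cite or quickly verify it from \cite{GHMS07}.

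Putting these together: $(C:I)^{-1} = \sum_{i=1}^n (C : f_i)^{-1} = \sum_{i=1}^n f_i \circ (C^{-1}) = \sum_{i=1}^n f_i \circ \langle G \rangle_R = \sum_{i=1}^n \langle f_i \circ G \rangle_R = \langle f_1 \circ G, \dots, f_n \circ G\rangle_R$, where the second-to-last equality uses that the contraction action commutes with the $R$-action in the appropriate sense — more precisely, $f \circ (r \circ G) = (fr)\circ G = r \circ (f \circ G)$, so $f \circ \langle G\rangle_R = \langle f \circ G\rangle_R$ as $R$-submodules of $T$. Two small points need care along the way: one should make sure the relevant modules are finitely generated so that Proposition \ref{HF from dual}.i applies and the manipulations stay within the one-to-one correspondence — this is where $\mathfrak{m}$-primality of $I$ (hence of $C:I$, since $C$ is $\mathfrak{m}$-primary Gorenstein and $C \subseteq I$) is used; and one should note the inclusion $C \subseteq I$ is not actually needed for the formula, only that $C$ is Gorenstein and $I$ is $\mathfrak{m}$-primary, but it does no harm to keep it.

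The main obstacle, such as it is, is purely bookkeeping: verifying the duality identities $(\mathfrak{a}:h)^{-1} = h\circ \mathfrak{a}^{-1}$ and $(\mathfrak{a}_1\cap\mathfrak{a}_2)^{-1} = \mathfrak{a}_1^{-1}+\mathfrak{a}_2^{-1}$ cleanly, and being careful that "generated by" always means generated as an $R$-module under the contraction action (the notation $\langle - \rangle_R$ in the excerpt). There is no hard analysis here — the content is entirely the formalism of Macaulay's inverse system — so I would keep the proof short, isolate the colon-duality lemma as the one genuinely load-bearing computation, and then assemble the chain of equalities above.
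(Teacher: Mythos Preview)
Your argument is correct, but the paper's proof is considerably more direct. Rather than decomposing into the two auxiliary duality facts $(\mf{a}:h)^{-1} = h\circ \mf{a}^{-1}$ and $(\mf{a}_1\cap\mf{a}_2)^{-1} = \mf{a}_1^{-1}+\mf{a}_2^{-1}$ and then assembling, the paper simply runs the whole chain of equivalences at the element level in one breath: $f\in C:I$ iff $ff_i\in C$ for all $i$ iff $(ff_i)\circ G=0$ for all $i$ iff $f\circ(f_i\circ G)=0$ for all $i$, i.e.\ $f$ annihilates $\langle f_1\circ G,\dots,f_n\circ G\rangle_R$; hence $C:I$ is the annihilator of that module and the claim follows from the correspondence. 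The core content---the adjunction $(fg)\circ G = f\circ(g\circ G)$ together with $C=\text{ann}_R\langle G\rangle_R$---is identical in both; your version just packages it into reusable lemmas, at the cost of invoking the intersection-to-sum identity, which the direct argument never needs to state.
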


\begin{proof}
The containment $f\in C:I$ holds if and only if $ff_i\in C$ for all $i=1,\dots, n$. This is equivalent to $ff_i\circ G=0$ for all $i=1,\dots, n$, which is true if and only if $f\circ (f_i \circ G)=0$ for all $i=1,\dots, n$, which is equivalent to $f\in \left< f_1\circ G, \dots, f_n\circ G \right>_R$.
\end{proof}

The last proposition we need is the formula mentioned in the introduction which displays a relationship between the Hilbert function of directly linked $\mf{m}$-primary homogeneous ideals. 

\begin{prop}[\cite{M12}, 5.2.19] \label{HF links}
Let $I$ be an $\mf{m}$-primary $R$-ideal containing a complete intersection $C$ of type $(a_1,\dots, a_N)$. Set $a=a_1+\dots +a_N - N$. Then 
\[ \text{HF}_{R/(C:I)}(a-i) = \text{HF}_{R/C}(i) - \text{HF}_{R/I}(i) \]
for all $i$.
\end{prop}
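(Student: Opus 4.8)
The plan is to reduce the identity to a dimension count inside Macaulay's inverse system, using the tools assembled just above. Write $J = C:I$ and $a = a_1 + \dots + a_N - N$. Since $C$ is a complete intersection of type $(a_1,\dots,a_N)$ in $R = k[x_1,\dots,x_N]$, the quotient $R/C$ is Artinian and Gorenstein with one-dimensional socle in degree $a$; in particular $\text{HF}_{R/C}(i) = 0$ for $i > a$ and $\text{HF}_{R/C}(a) \neq 0$. By Proposition~\ref{HF from dual}.i, the inverse system $C^{-1}$ is generated by a single homogeneous polynomial $G \in T$, and since $C^{-1} = R \circ G$ while contraction never raises degree, the top nonzero degree of $C^{-1}$ equals $\deg G$; by Proposition~\ref{HF from dual}.ii this top degree is that of $R/C$, so $\deg G = a$. (One could instead run the argument through Gorenstein duality over the Artinian ring $R/C$ and the exact sequence $0 \to I/C \to R/C \to R/I \to 0$, but the inverse-system route keeps everything in the language of this section.)

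Next I would invoke Lemma~\ref{dual generators from link} for the inclusion $C \subseteq I$: fixing homogeneous generators $f_1,\dots,f_n$ of $I$, it gives $J^{-1} = \langle f_1 \circ G, \dots, f_n \circ G \rangle_R$, which equals $\{f \circ G : f \in I\}$ because $I$ is an ideal and contraction is an $R$-module action. Consider the $k$-linear map $\varphi \colon R \to T$ given by $\varphi(f) = f \circ G$. Since $\deg G = a$, it sends $R_{a-j}$ into $T_j$ for every $j$, its image on $I$ is exactly $J^{-1}$, and its kernel is $\text{ann}_R(G) = \text{ann}_R(C^{-1}) = C$ by the inverse-system correspondence. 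Restricting to $I_{a-j}$ and using $C \subseteq I$ to identify the kernel as $C_{a-j}$ yields, for each $j$, a short exact sequence of $k$-vector spaces
\[ 0 \longrightarrow C_{a-j} \longrightarrow I_{a-j} \xrightarrow{\varphi} (J^{-1})_j \longrightarrow 0 . \]

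Finally, counting dimensions in this sequence gives $\dim_k (J^{-1})_j = \dim_k I_{a-j} - \dim_k C_{a-j}$. Rewriting each term via $\dim_k M_{a-j} = \dim_k R_{a-j} - \text{HF}_{R/M}(a-j)$ for $M = I$ and $M = C$ turns this into $\dim_k (J^{-1})_j = \text{HF}_{R/C}(a-j) - \text{HF}_{R/I}(a-j)$, and Proposition~\ref{HF from dual}.ii identifies the left-hand side with $\text{HF}_{R/(C:I)}(j)$. Substituting $j = a - i$ produces the asserted formula. I expect the only real subtlety to be the second paragraph: checking that $\varphi$ restricts to a surjection $I_{a-j} \twoheadrightarrow (J^{-1})_j$ with kernel precisely $C_{a-j}$, i.e.\ pinning down the degree shift in the contraction pairing together with the identification $\ker \varphi = C$; once that exact sequence is in place, the remainder is bookkeeping.
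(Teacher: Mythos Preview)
The paper does not supply its own proof of this proposition; it is quoted from \cite{M12} without argument. Your proof is correct and self-contained, and it has the virtue of being phrased entirely in the inverse-system language that Section~5 sets up: the key step---that $\varphi\colon I_{a-j}\to (J^{-1})_j$ is surjective with kernel $C_{a-j}$---goes through as you describe, since every degree-$j$ element of $\langle f_i\circ G\rangle_R$ is a sum of terms $r\circ(f_i\circ G)=(rf_i)\circ G$ with $rf_i\in I_{a-j}$.

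The textbook route (which you allude to) would instead invoke the perfect pairing $(R/C)_i\times(R/C)_{a-i}\to(R/C)_a\cong k$ on the Artinian Gorenstein ring $R/C$, under which $(I/C)_i$ and $(J/C)_{a-i}$ are mutual annihilators; the exact sequence $0\to I/C\to R/C\to R/I\to 0$ then gives the dimension count directly. Your inverse-system argument is essentially a concrete realization of this duality via the contraction pairing with $G$, so the two approaches are equivalent in spirit; yours meshes more smoothly with Lemma~\ref{dual generators from link} and Proposition~\ref{HF from dual} as they appear in the paper.
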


\begin{thm}
\label{thm:main2}
Let $R=k[x,y,z]$ and fix $n\geq 4$. Let $J$ be an ideal satisfying $(\star)$. Consider the monomial ideal $$J'=(x^2, y^{2n+1}, z^{2n+4}, y^nz, xy^{n-1}z, xyz^{n+1}).$$ 
Then $J'$ is SBL and has the same Hilbert function as $J$.  
\end{thm}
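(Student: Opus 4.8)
The plan is to prove Theorem \ref{thm:main2} in two parts: first that $J'$ has the same Hilbert function as $J$, and second that $J'$ is SBL. For the Hilbert function, I would use the connection between linkage and Macaulay inverse systems established in Lemma \ref{dual generators from link} together with Proposition \ref{HF links}. Recall from Section 3.1 that any ideal $J$ satisfying $(\star)$ is a direct link $D:I$, where $I$ is an ideal with resolution (\ref{I res}) and $D$ is a complete intersection of type $(2,n+1,2n+4)$. In turn, $I$ itself arises (via the construction of Theorem \ref{HMNU} with $c_1=1$, $c_2=n$, $c_3=n+3$, $c_4=2n+3$) from an ideal whose structure is explicit. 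Since the Hilbert function of a link depends only on the Hilbert function of the original ideal and the type of the complete intersection (Proposition \ref{HF links}), it suffices to compute $\text{HF}_{R/J}$ once from this chain, and then compute $\text{HF}_{R/J'}$ directly for the stated monomial ideal, and check they agree.

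For the direct computation of $\text{HF}_{R/J'}$, I would work in $R=k[x,y,z]$ and note that $J'=(x^2,y^{2n+1},z^{2n+4},y^nz,xy^{n-1}z,xyz^{n+1})$ contains the complete intersection $C'=(x^2,y^{2n+1},z^{2n+4})$. Rather than grinding through the monomial count, the cleaner route is to pass to the inverse system: using Lemma \ref{dual generators from link}, if $C'^{-1}=\langle G\rangle_R$ with $G=X^1Y^{2n}Z^{2n+3}$ (the socle generator of the complete intersection, contracted appropriately), then $(C':J')^{-1}$ is generated by the contractions of $G$ by the generators of $J'$; dually, one recovers $\text{HF}_{R/J'}$ from $\text{HF}_{R/C'}$ and $\text{HF}_{R/(C':J')}$ via Proposition \ref{HF links}. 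The monomial generators of $J'$ were chosen precisely so that the link $C':J'$ is again a monomial ideal whose Hilbert function matches that of the link $C':I$ (which equals $J$ up to the link $D$), and I would verify this by comparing generating degrees: the three "binomial-type" generators $y^nz$, $xy^{n-1}z$, $xyz^{n+1}$ of $J'$ have degrees $n+1$, $n+1$, $n+3$, matching the non-complete-intersection generators of $J$ in (\ref{J res}). Concretely I expect to identify $C':J'$ with a monomial ideal having the same Hilbert function as an ideal with resolution (\ref{I res}), then apply Proposition \ref{HF links} to both $J$ and $J'$ with the common type $(2,n+1,2n+4)$.

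For the SBL claim, the key observation is that $J'$ contains the linear... no: $J'$ contains the quadric $x^2$ but no linear form. Instead I would use that $J'$ is a zero-dimensional monomial ideal. By the result of Huneke and Ulrich in \cite{HU07} cited in the introduction, all zero-dimensional licci monomial ideals are SBL, so it suffices to show $J'$ is licci. To see $J'$ is licci, I would exhibit a link of $J'$ to an ideal containing a linear form and invoke Corollary \ref{g3 linear form} together with Proposition \ref{non licci}: more precisely, one shows $\lambda(J')\geq 1$ by checking that two generators of the complete intersection $C'=(x^2,y^{2n+1},z^{2n+4})$ (or a suitable monomial complete intersection inside $J'$) yield a minimal Koszul relation in $J'$, and that this property propagates under minimal links, which forces $J'$ to be licci by Proposition \ref{non licci}. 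Alternatively, and perhaps more transparently for a monomial ideal, one can directly produce a chain of monomial links to a complete intersection by repeatedly choosing complete intersections generated by pure powers and tracking the minimal generators.

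The main obstacle I anticipate is the bookkeeping in the Hilbert function comparison: one must carry the chain of links from the explicit Theorem \ref{HMNU} construction down to $J$, and the parallel monomial chain down to $J'$, and confirm the numerics agree in every degree. The inverse-system computation via Lemma \ref{dual generators from link} should make this tractable — contracting a single monomial $G$ by a handful of monomial generators is mechanical — but care is needed because Proposition \ref{HF links} is stated for complete intersections, so at each link I must ensure I am linking through a complete intersection of the correct type, and that the monomial ideal $J'$ was engineered so its link through $C'$ reproduces (the Hilbert function of) the ideal $I$ rather than merely something with the right CM type. The SBL half, by contrast, should follow quickly once liccihood is established, since it reduces to the cited \cite{HU07} result on zero-dimensional licci monomial ideals.
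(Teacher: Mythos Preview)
Your overall strategy for the Hilbert function part is correct in spirit --- link both $J$ and $J'$ through complete intersections of the \emph{same} type and compare the linked ideals via inverse systems --- but there is a concrete mismatch in your plan. You propose the complete intersection $C'=(x^2,y^{2n+1},z^{2n+4})$, which has type $(2,2n+1,2n+4)$, yet you then say you will ``apply Proposition~\ref{HF links} to both $J$ and $J'$ with the common type $(2,n+1,2n+4)$.'' These types disagree, so Proposition~\ref{HF links} cannot compare the two links directly. The paper resolves this by choosing instead $C'=(x^2,\,y^nz,\,y^{2n+4}-z^{2n+4})\subseteq J'$, which \emph{does} have type $(2,n+1,2n+4)$ --- note the non-monomial third generator. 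This is the key maneuver: it aligns the type with the link $I\to J$ from Section~3.1, reducing the problem to showing that $I$ and $I'=C':J'$ have the same Hilbert function. The paper then computes $(C')^{-1}=\langle G\rangle_R$ for $G=XY^{3n+3}+XY^{n-1}Z^{2n+4}$ (not a single monomial, since $C'$ is not a monomial complete intersection), applies Lemma~\ref{dual generators from link} to get $(I')^{-1}$, and finishes with a short combinatorial bijection between the monomials in $I^{-1}\setminus(I^{-1}\cap(I')^{-1})$ and those in $(I')^{-1}\setminus(I^{-1}\cap(I')^{-1})$. Your description of the contraction step as ``mechanical'' is optimistic; with the wrong $C'$ the computation does not close, and even with the right one the final dimension comparison requires a small argument.

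For the SBL half, your route via \cite{HU07} is a genuinely different approach and would work once liccihood is established, but your appeal to Proposition~\ref{non licci} is in the wrong direction: that proposition says a \emph{non}-licci ideal can be linked in boundedly many steps to something with $\lambda=0$; it does not assert that $\lambda>0$ propagating under links forces licci. The paper sidesteps this by exhibiting an explicit sequentially bounded chain of monomial links from $J'$ to a complete intersection (four links, through types $(2,2n+1,2n+4)\geq(2,2n,2n+3)\geq(2,n-1,2n+3)\geq(1,n-1,2n+3)$), which proves SBL directly without needing \cite{HU07} at all. Your fallback ``directly produce a chain of monomial links'' is precisely this and is the cleanest path.
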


\begin{proof}
First, using the construction of Theorem \ref{HMNU}, let $$I=z(z^{2n+3}, x,y)+(x^n, y^{n+3}).$$ Then the link $$(yz,x^{n+1}, y^{2n+4}-z^{2n+4}):I$$ realizes the Hilbert function of an ideal satisfying $(\star)$. Indeed, we know that the graded resolution of $I$ has the form of (\ref{I res}), which implies the graded resolution of the link has the form of (\ref{J res}). Therefore, we may assume $J$ is the link of $I$ above without the loss of generality. 
\par A sequence of links from $J'$ to a complete intersection is as follows:
\[ J' \widesim{(x^2,y^{2n+1},z^{2n+4})} J_1' \widesim{(x^2,y^{2n},z^{2n+3})} J_2' \widesim{(x^2,y^{n-1},z^{2n+3})} J_3' \widesim{(x,y^{n-1},z^{2n+3})} (x,y^{n-2},z^n) .\]
Therefore, $J'$ is SBL. To show $J$ and $J'$ have the same Hilbert function, we use Proposition \ref{HF links} and Macaulay's inverse systems.
\par Let $C'=(x^2, y^nz, y^{2n+4}-z^{2n+4})$ and set $I' = C':J'$. Since
\[ I \widesim{(2,n+1,2n+4)} J \hspace{7pt} \text{and} \hspace{7pt} I' \widesim{(2,n+1,2n+4)} J',\]
by Proposition \ref{HF links} it suffices to show $I$ and $I'$ have the same Hilbert function. 
\par It is easily checked that $I^{-1} = \left< X^{n-1}Y^{n+2}, Z^{2n+3} \right>_R$. Let $$G=XY^{3n+3} + XY^{n-1}Z^{2n+4} \in T.$$ A simple calculation shows $C' \subseteq 0:_R G$, and the Hilbert functions of $C'$ and $0:_R G$ are the same since both are symmetric with highest non-vanishing degree $2n+4$ (see \cite{GHMS07}, Chapter 5). Thus $(C')^{-1} = \left< G \right>_R$. By Lemma \ref{dual generators from link},
\begin{align} \notag
 (I')^{-1} &= (C' : J')^{-1} = (C':(xy^{n-1}z, xyz^{n+1}, y^{2n+1}))^{-1} \\ \notag
 &= \left< xy^{n-1}z \circ G, xyz^{n+1} \circ G, y^{2n+1} \circ G \right>_R \\ \notag
 &= \left< Z^{2n+3}, Y^{n-2}Z^{n+3}, XY^{n+2} \right>_R. \notag
 \end{align}
Notice that $I^{-1}\cap (I')^{-1}= \left< XY^{n+2}, Z^{2n+3} \right>_R$. Then for all $j$, 
\begin{align} \notag
\dim_k(I^{-1} \bs (I^{-1}\cap (I')^{-1}))_j &= \#\{ X^aY^b : a+b=j, \hspace{5pt} 2\leq a\leq n-1, \text{ and } 0\leq b\leq n+2 \}  \\ \notag
&= \#\{ X^aY^b : a+b = j, \hspace{5pt} 1\leq a \leq n-2, \text{ and } 1\leq b \leq n+3\} \\ \notag
&= \#\{ Y^bZ^c : b+c = j, \hspace{5pt} 1\leq b \leq n-2, \text{ and } 1\leq c \leq n+3\} \\ \notag
&=\dim_k((I')^{-1} \bs (I^{-1}\cap (I')^{-1}))_j.
\end{align}
Therefore, $\dim_k(I^{-1})_j = \dim_k(I')^{-1}_j$ for all $j$, which by Proposition \ref{HF from dual} implies $I$ and $I'$ have the same Hilbert function.
\end{proof}


\appendix
\section{Grade Jumps and Linear Syzygies} 

Let $R=k[x_1,\dots, x_N]=\oplus_{i=0}^{\infty} R_i$, with the standard grading, and let $I$ be a grade $n$ homogeneous ideal. The lemmas of this section are mainly concerned with how to find the grade jumps of grade $3$ perfect ideal using only the graded Betti numbers. Most of these lemmas are really remarks, but since they are used often in the proof of the Main Theorem, we record them here in detail.

\begin{lem} \label{g1q1}
Let $q$ be a quadratic form, and let $f,g$ be nonzero, homogeneous polynomials of degrees $n$ and $m$, respectively, with $n\geq m>1$. Let $H=(q,f,g)$.
\begin{enumerate}[i.]
\item If $grade(H)=1$, then $\beta_{2,j}(R/H)\neq 0$ for $j\in\{n+1, m+1\}$, and $\{\beta_{2,j}(R/H): j< n+m\}$ has at least three nonzero elements. 
\item Assume $n=m$ and that $I'$ is an $R$-ideal with $\alpha :=  \min\{ j : \beta_{1,j}(R/I')\neq 0\} > n$. Consider the ideal $I=(H,I')$ and the set $S=\{j : j<2n \text{ and } \beta_{2,j}(R/I)\neq 0\}$. Let $p=\#S$. If $p<3$, or if $p= 3$ and either $\max \{j : j\in S\}=\alpha+1$ and $\beta_{2,\alpha+2}(R/I)=0$ or $\alpha+1\leq \max \{j: j\in S\}\leq \alpha+2< 2n$, then $\grade(H)\geq 2$.
\end{enumerate}
\end{lem}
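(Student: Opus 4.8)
The plan is to argue by contrapositive: assuming $\grade(H) = 1$ (so that part (i) applies to $H$), I would show that $I = (H, I')$ then has too many low-degree second syzygies, contradicting the hypothesis on $p = \#S$. Part (i) gives me three distinct degrees $j_1 < j_2 < j_3 < 2n$ with $\beta_{2,j_1+1}$-type information: specifically, since $n = m$ here, part (i) tells me $\beta_{2,n+1}(R/H) \neq 0$ and that $\{\beta_{2,j}(R/H) : j < 2n\}$ has at least three nonzero entries. The key point is that adding generators of $I'$, all of degree $\geq \alpha > n$, to $H$ cannot \emph{destroy} second syzygies of $R/H$ living in degrees $< \alpha + $ (lowest generator degree); more precisely, I want a comparison result saying that the low-degree strand of the Betti table of $R/I$ dominates that of $R/H$ once the extra generators sit in high enough degree. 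First I would make this precise, probably via the short exact sequence $0 \to (H :_R I') / H \to R/H \to R/I \to 0$ or by a mapping-cone / Tor long exact sequence comparing $\mathrm{Tor}(R/H, k)$ and $\mathrm{Tor}(R/I, k)$ in degrees below where the generators of $I'$ can interact.

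The main steps, in order: (1) invoke part (i) to extract three degrees $d_1 < d_2 < d_3 < 2n$ where $R/H$ has nonzero $\beta_2$, with $d_1 = m+1 = n+1$ among them; (2) establish the comparison $\beta_{2,j}(R/H) \leq \beta_{2,j}(R/I)$ for $j$ in the relevant low-degree range — the range being roughly $j < \alpha + 2$ — so that each $d_i$ in that range forces $d_i \in S$; (3) handle the degrees $d_i$ that are \emph{not} automatically below $\alpha+2$: here I use that $q, f, g$ all have degree $\leq n < \alpha$, so the Koszul-type second syzygies of $H$ (which is where the three nonzero $\beta_2$'s of a grade-$1$ three-generated ideal come from) occur in degrees that are sums of pairs of generator degrees, all $\leq 2n$, and these persist; (4) conclude $\#S \geq 3$, and in the boundary case $p = 3$ analyze the stated configurations ($\max S = \alpha+1$ with $\beta_{2,\alpha+2}(R/I) = 0$, or $\alpha+1 \leq \max S \leq \alpha+2 < 2n$) to show that even then one of the three forced second syzygies of $R/H$ cannot fit, because the Koszul syzygy on the two degree-$n$ generators (if $f, g$ were both minimal) or the linear/quadratic syzygies from $\grade = 1$ would land outside the allowed window. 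In the tight case I expect to need the explicit shape of the resolution of a grade-$1$ ideal $(q,f,g)$: such an ideal has $R/H$ with projective dimension $1$ (it is principal up to a complete intersection factor) — actually more carefully, $\grade(H) = 1$ forces $H$ to have a nontrivial greatest common divisor structure, giving a precise count of $\beta_2$.

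The hard part will be step (2)–(3): pinning down exactly which second syzygies of $R/H$ survive in $R/I$ and in which degrees, since adding generators of $I'$ can both create new syzygies (Koszul relations between old and new generators, starting in degree $\geq n + \alpha > 2n$, so harmlessly above the window) and — more dangerously — could in principle allow old minimal generators of $H$ to become non-minimal, or old syzygies to become non-minimal, if $I'$ happens to contain elements that are weak associates of $q, f, g$; but since every generator of $I'$ has degree $\geq \alpha > n \geq \deg q, \deg f, \deg g$, no such collapse can occur in the generating degrees, and I would leverage exactly this degree gap to rule out the collapse of the relevant $\beta_2$ entries. The case analysis for $p = 3$ is then bookkeeping: I match the three forced degrees of $R/H$ against the at most three degrees in $S$ and against the constraint that $\beta_{2,\alpha+2}(R/I) = 0$ or $\max S \leq \alpha + 2$, and derive the numerical contradiction $n+1, $ together with the other two Koszul-syzygy degrees, cannot all be accommodated unless $\grade(H) \geq 2$.
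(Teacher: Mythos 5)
Your contrapositive strategy for (ii) and the basic degree-gap observation are sound and in the spirit of the paper: since all generators of $I'$ have degree $\geq \alpha > n$, the low-degree part of the minimal resolution of $R/I$ agrees with that of $R/H$, so $\beta_{2,j}(R/I) = \beta_{2,j}(R/H)$ for $j \leq \alpha$. But you do not actually prove part (i), and the paper does that in one step: since $\grade(H) = 1$ and $H$ contains a quadric, one writes $H = \ell H'$ with $\ell$ a linear form, and the exact sequence $0 \to R/H'(-1) \xrightarrow{\cdot\ell} R/H \to R/(\ell) \to 0$ gives $\beta_{i,j}(R/H) = \beta_{i,j-1}(R/H')$ for $i \geq 1$; the three asserted entries of $\beta_2(R/H)$ then come from the resolution of $H' = (q/\ell,\, f/\ell,\, g/\ell)$. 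Related to this: in the $n = m$ case of (ii) the degrees $n+1$ and $m+1$ coincide, so part (i) should be read as giving three entries counted with multiplicity ($\beta_{2,n+1}(R/H) = 2$ plus one more degree near $n+m$), not three distinct degrees $d_1 < d_2 < d_3$; your step (1) as written does not set up.

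The more serious gap is in your step (3). You claim that Koszul relations between old and new generators "start in degree $\geq n + \alpha > 2n$, so harmlessly above the window." This forgets the quadric: the Koszul relation between $q$ (degree $2$) and a degree-$\alpha$ generator of $I'$ lives in degree $\alpha + 2$, and the Koszul argument (Definition \ref{Koszul argument}) applied to $q$ and a minimal degree-$\alpha$ generator forces a minimal second syzygy of $I$ in some degree lying in $(\alpha, \alpha + 2]$, which is typically $< 2n$. These degree-$(\alpha+1)$ or -$(\alpha+2)$ syzygies are precisely what the $p = 3$ hypotheses constrain, and the paper's argument for that case is built around identifying the syzygy of degree $\alpha+1$ or $\alpha+2$ with the one forced by $q$ and the degree-$\alpha$ generator, then checking that the remaining entries of $S$ cannot both accommodate $n+1$ and the extra "Koszul-type" syzygy of $H$ that a grade-$1$ $H$ would contribute. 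By dismissing the near-$\alpha$ syzygies as out of range you have no way to close the $p = 3$ case.
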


\begin{proof}
To prove i., write $H = \ell H'$, which is possible because $H$ is a grade $1$ ideal containing a quadric. The result follows by the short exact sequence
\[ 0 \to R/ H'(-1) \xrightarrow{\cdot \ell} R/ H \to R/(\ell) \to 0.\]
Part ii. essentially follows by the Koszul argument. Indeed, if $p<3$, then $\grade(H)\geq 2$ by the previous part. Assume $p=3$. Then, since $\alpha>n$, we have $\beta_{2,n+1}(R/I)=2$. Moreover, $\beta_{2,j}(R/I)=1$ for exactly one $j$ with $\alpha+1 \leq j\leq \alpha+2$. With the conditions of the statement, the Koszul argument implies the corresponding syzygy must involve the quadric and one of the degree $\alpha$ generators of $I$, which by part i. implies $\grade(H)\geq 2$.
\end{proof}

\begin{lem} \label{max degree relations}
Let $I=(f_1, \dots, f_n)$ be homogeneous $R$-ideal with $\deg f_1 \leq \dots \leq \deg f_{n}$. Set $I'=(f_1,\dots, f_{n-1})$, and let $S=\{ j : j > \deg f_n \text{ and } \beta_{2,j}(R/I)\neq 0\}$. Then $$\grade(I') \leq \sum_{j\in S} \beta_{2,j}(R/I).$$
\end{lem}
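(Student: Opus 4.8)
The plan is to extract the inequality from a mapping-cone analysis of the link obtained by removing $f_n$ from a generating set. Write $I'=(f_1,\dots,f_{n-1})$ and, to make the argument geometrically clean, I first reduce to the case in which $I'$ is $\mf{m}$-primary of the appropriate grade by passing to an Artinian reduction (invoking Remark~\ref{artinian rem}, with the base field extended if necessary) and a generic choice of generators so that $f_1,\dots,f_{\grade(I')}$ form a maximal regular sequence inside $I'$. The key object is then the short exact sequence relating $R/I'$ and $R/I$ through multiplication by $f_n$: since $I=(I',f_n)$ we have
\[ 0 \to R/(I':f_n)(-\deg f_n) \xrightarrow{\ \cdot f_n\ } R/I' \to R/I \to 0. \]
Building the minimal graded free resolution of $R/I$ as the mapping cone of the induced chain map $\mathcal{F}_{\bigcdot}(R/(I':f_n))(-\deg f_n) \to \mathcal{F}_{\bigcdot}(R/I')$, the new second-syzygy generators of $R/I$ in degrees strictly above $\deg f_n$ come precisely from the minimal generators of $R/(I':f_n)$ (shifted by $\deg f_n$); that is, a minimal generator of $I':f_n$ of degree $d$ produces a contribution to $\beta_{2,\,d+\deg f_n}(R/I)$ unless it cancels against something, and such a cancellation can only lower, never raise, the count. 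Hence $\sum_{j\in S}\beta_{2,j}(R/I) \geq \beta_{1}(R/(I':f_n)) = \mu(I':f_n)$, the minimal number of generators of $I':f_n$.

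The second half is the observation that $\grade(I') \leq \mu(I':f_n)$. This is where the argument is most delicate and is, I expect, the main obstacle. The point is that $I':f_n$ is a proper ideal (if it were the whole ring then $f_n\in I'$, contradicting that $f_1,\dots,f_n$ is a minimal generating set), and it contains $I'$, so $\grade(I':f_n)\geq \grade(I')$; combined with the fact that any proper ideal needs at least $\grade$-many generators (a consequence of the characterization of grade via regular sequences, or of the Auslander–Buchsbaum/depth formula), we get $\mu(I':f_n)\geq \grade(I':f_n)\geq \grade(I')$. Care is needed to ensure the reduction steps at the start do not decrease $\grade(I')$ or change the relevant Betti numbers: Artinian reduction by a regular sequence of linear forms preserves both grade (it becomes the projective dimension of the reduced ring, equivalently the original grade) and all graded Betti numbers, and the genericity in choosing generators only affects which $f_i$ are used, not the ideal $I'$ itself nor $I$.

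Putting the two halves together yields $\grade(I') \leq \mu(I':f_n) \leq \sum_{j\in S}\beta_{2,j}(R/I)$, which is the claim. The one subtlety I would flag explicitly in the write-up is the direction of the cancellation inequality in the mapping cone: one must argue that passing from the (generally non-minimal) mapping-cone resolution to the minimal one can only remove pairs of generators/relations, so the count of $\beta_{2,j}(R/I)$ for $j>\deg f_n$ is bounded below by the number of minimal generators of $I':f_n$, with no new such generators appearing from lower homological degree — this is immediate since the mapping-cone summand contributing to homological degree $2$ of $R/I$ in degrees $>\deg f_n$ is exactly $\mathcal{F}_1(R/(I':f_n))(-\deg f_n)$ and trimming only deletes summands.
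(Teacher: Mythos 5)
Your route through the colon ideal $I':f_n$ is genuinely different from the paper's, and the skeleton $\grade(I')\le\mu(I':f_n)\le\sum_{j\in S}\beta_{2,j}(R/I)$ is a valid one, but your justification of the second inequality is wrong in direction. The mapping cone of $\mathcal{F}_\bullet(R/(I':f_n))(-\deg f_n)\to\mathcal{F}_\bullet(R/I')$ is a generally non-minimal resolution of $R/I$, and ``trimming only deletes summands'' therefore gives the \emph{upper} bound $\beta_{2,j}(R/I)\le\beta_{1,j-\deg f_n}(R/(I':f_n))+\beta_{2,j}(R/I')$, not the lower bound you need; your own parenthetical ``such a cancellation can only lower, never raise, the count'' is precisely the phenomenon that breaks the claimed inequality. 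You also assert that the homological-degree-$2$ summands of the cone in internal degrees $>\deg f_n$ are exactly $\mathcal{F}_1(R/(I':f_n))(-\deg f_n)$, forgetting that $\mathcal{F}_2(R/I')$ contributes there as well (first syzygies of $I'$ can easily have degree $>\deg f_n$). The inequality you want is true, but the correct reason is that the connecting map $\mathrm{Tor}_2^R(R/I,k)\to\mathrm{Tor}_1^R(R/(I':f_n),k)(-\deg f_n)$ in the long exact sequence is \emph{surjective}, which holds because the next map $\mathrm{Tor}_1^R(R/(I':f_n),k)(-\deg f_n)\to\mathrm{Tor}_1^R(R/I',k)$ is zero; this in turn relies on $f_1,\dots,f_n$ being a \emph{minimal} generating set of $I$ (a unit entry in $u_1$ would make some $f_i$, $i<n$, redundant in $I$). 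That hypothesis is essential and unstated in your write-up: without it the lemma itself is false (take $I'=(x,y)\subseteq k[x,y]$ and $f_3=x$, so $\grade(I')=2$ while $\sum_{j>1}\beta_{2,j}(R/I)=1$).

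For comparison, the paper avoids the mapping cone and the colon ideal altogether: it looks directly at a minimal generating set of the first syzygy module $K=\mathrm{Syz}_1(R/I)$, observes that any generator of degree $\le\deg f_n$ has zero $e_n$-component (again by minimality), and deduces that the $e_n$-components $g_{n1},\dots,g_{nm}$ of the $m$ minimal generators of degree $>\deg f_n$ already generate an ideal containing $(f_1,\dots,f_{n-1})$; Krull's height theorem then gives $\grade(I')\le m$. (Unwinding it, the ideal $(g_{n1},\dots,g_{nm})$ is exactly $I':f_n$, so both proofs are bounding $\mu(I':f_n)$ by $m$, but the paper's version does not require tracking what does or does not trim in a mapping cone.) Your preliminary Artinian reduction and generic re-choice of generators are unnecessary for the argument and in fact problematic: a regular sequence of linear forms making $I'$ $\mathfrak{m}$-primary may have length exceeding $\operatorname{depth}(R/I)$, hence fail to be regular on $R/I$ and fail to preserve the Betti numbers $\beta_{2,j}(R/I)$ you are trying to control.
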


\begin{proof}
Let $\mc{F}_{\bigcdot}$ be the minimal graded free resolution of $I$, and let $K=\text{Syz}_1(R/I)$. Let $\{e_1,\dots, e_n\}$ be a basis for $F_1$, and set $m = \sum_{j\in S} \beta_{2,j}(R/I)$. Then for $i=1,\dots, m$, there are syzygies $$s_i = g_{1i} e_1 + \dots + g_{ni} e_n \in K$$ which form a minimal generating set of $K/(K_{\leq \deg f_n})$. The Koszul relations $f_n e_i - f_i e_n\in K$, for $i=1,\dots, n-1$, can be written as $R$-combinations of the $s_i$ and syzygies of degree less than $\deg f_n$. Since the lower degree relations lie in the span of $\{e_1,\dots, e_{n-1}\}$ (indeed, recall how we define degree in Defintion \ref{syzygy defs}), it follows that $(f_1,\dots, f_{n-1}) \subseteq (g_{n1},\dots, g_{nm})$. Thus the result follows by the Principal Ideal Theorem.
\end{proof}

\begin{lem}[\cite{HMNU07}, Remark 3.1] \label{max socle shift}
Let $I$ be a perfect grade $n$ $R$-ideal with $M=\max \{j : \beta_{n,j}\neq 0\}$. If $I$ contains a complete intersection of type $(a_1, \dots, a_n)$, then $a_1+\dots+a_n\geq M$, with equality if and only if $I$ is a complete intersection.
\end{lem}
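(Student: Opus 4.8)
The plan is to obtain the bound by applying Ferrand's mapping cone (Theorem~\ref{Ferrand}) twice: once to $I$, to see that its link is again perfect of grade $n$, and once to that link, to get a resolution of $R/I$ whose last free module is controlled by the generator degrees of the link. Fix a complete intersection $C=(f_1,\dots,f_n)\subseteq I$ of type $(a_1,\dots,a_n)$ and put $a=a_1+\dots+a_n$. If $I=C$, then $R/I$ is resolved by the Koszul complex on $f_1,\dots,f_n$, whose last free module is $R(-a)$, so $M=a$ and equality holds. So assume $I\neq C$. Since $C\subseteq I$ we have $CI\subseteq C$, hence $C\subseteq C:I=:J$; moreover $J\neq C$ (otherwise $I=C:J=C:C=R$, impossible since $\grade(I)=n\ge 1$) and $J\neq R$ (otherwise $I\subseteq C$, forcing $I=C$), so $C\subsetneq J$ with $J$ a proper homogeneous ideal.

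Next I would establish that $J$ is perfect of grade $n$ and that $C:J=I$. For the first point, apply Theorem~\ref{Ferrand} to $I$ with $C$: it already produces a free resolution of $R/J$ of length $n$, so $\mathrm{pd}_R(R/J)\le n$, and combined with $\grade(J)\ge\grade(C)=n$ and the general inequality $\grade(J)\le\mathrm{pd}_R(R/J)$ this forces $J$ to be perfect of grade $n$. The identity $C:J=C:(C:I)=I$ is the standard double-linkage fact for an unmixed ideal in a Gorenstein ring (see \cite{PS74}); here $R/I$ is Cohen--Macaulay, hence unmixed. Now apply Theorem~\ref{Ferrand} a second time, to $J$ with $C$: since $C:J=I$, the reduced dual mapping cone is a (generally non-minimal) graded free resolution of $R/I$ whose free module in homological degree $n$ is precisely $\bigoplus_j R(-(a-j))^{\beta_{1,j}(R/J)}$.

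To finish, I would compare this resolution with the minimal one of $R/I$: the minimal resolution is obtained by splitting off trivial subcomplexes (as in Corollary~\ref{trims}), an operation that only deletes free summands, so every summand of the last free module of the minimal resolution of $R/I$ already occurs among the summands of $\bigoplus_j R(-(a-j))^{\beta_{1,j}(R/J)}$. Hence $\beta_{n,d}(R/I)\neq 0$ forces $\beta_{1,\,a-d}(R/J)\neq 0$, and since $J$ is a proper homogeneous ideal its minimal generators have positive degree, i.e.\ $a-d\ge 1$. Taking $d=M$ gives $M\le a-1<a=\sum a_i$, which together with the case $I=C$ yields $\sum a_i\ge M$ with equality precisely when $I=C$, so in particular $I$ is a complete intersection.

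The step I expect to need the most care is ensuring the summand $R(-(a-M))$ is not destroyed in passing to the minimal resolution; this works because the bound is read off from the \emph{last} free module of the Ferrand resolution of $R/I$ (whose summands can only shrink under trimming) and is compared against the genuine Betti numbers $\beta_{1,\bullet}(R/J)$ of the proper ideal $J$, which robustly satisfy the positive-degree constraint. A minor point of scope is the equality clause: strictly speaking one proves ``$\sum a_i=M$ iff $C=I$'', and this matches ``$I$ is a complete intersection'' in the way the lemma is applied, where $(a_1,\dots,a_n)$ is the minimal type of $I$.
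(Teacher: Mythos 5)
The paper does not prove this lemma---it is quoted from \cite{HMNU07}, Remark~3.1---so there is no argument in the text to compare against. Your proof is correct and self-contained. The double application of Ferrand's mapping cone is well arranged: the first application of Theorem~\ref{Ferrand} to $I$ and $C$ produces a length-$n$ free resolution of $R/J$ where $J=C:I$, which combined with $\grade(J)\ge\grade(C)=n$ shows $J$ is perfect of grade $n$; the double-link identity $C:J=I$ holds because $R$ is Gorenstein and $R/I$ is Cohen--Macaulay (hence unmixed); and the second application, to $J$ and $C$, produces a length-$n$ graded free resolution of $R/I$ whose $n$-th term is $\bigoplus_j R(-(a-j))^{\beta_{1,j}(R/J)}$. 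Since the minimal graded free resolution splits off as a direct summand of any graded free resolution of the same module, $\beta_{n,M}(R/I)\neq 0$ forces $\beta_{1,a-M}(R/J)\neq 0$; as $J$ is a proper homogeneous ideal, $a-M\ge 1$, which together with the Koszul case $I=C$ gives the claim. Your caveat about the equality clause is the right reading: what the argument (and the intended meaning of the statement) establishes is that $\sum a_i=M$ if and only if $C=I$, which is how the lemma is invoked throughout the paper, where $(a_1,\dots,a_n)$ is taken to be the minimal type. For completeness, an equivalent and slightly shorter route---probably closer to \cite{HMNU07}---notes that $J/C\cong\omega_{R/I}$ up to a shift by $a$, so the top shift $M$ in $F_n$ equals $a$ minus the initial degree of $J/C$, which is positive exactly when $J\neq R$; but this is the same content read through duality rather than through the mapping cone.
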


These last lemmas are concerned with special cases of linear relations.



\begin{lem} \label{g12l}
Assume $g\in R$ is a form of degree $n>0$ and $\ell$ is a linear form such that $g\notin (\ell)$. Let $H\subseteq R$ be a nonzero, proper ideal, and let $\mathcal{H}_{\bigcdot}$ be the minimal graded free resolution of $H$.
\par If the colon ideal $(\ell H):g$ contains a linear form, then $(\ell H):g=\ell R$, and the minimal graded free resolution of $(\ell H,g)$ is $\mathcal{H}_{\bigcdot}'$, where $H_1'=H_1(-1)\oplus R(-n)$, $H_2'=H_2(-1)\oplus R(-(n+1))$, and $H_i' = H_i(-1)$ for all $i>2$. In particular, if pd$(H)\geq 2$, then $\ell H$ and $(\ell H,g)$ have the same projective dimension.
\end{lem}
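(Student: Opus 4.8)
The plan is to exploit the short exact sequence relating $R/(\ell H):g$, $R/\ell H$, and $R/(\ell H, g)$, together with the hypothesis that $(\ell H):g$ contains a linear form, to pin down the colon ideal exactly and then build the resolution by a mapping cone. First I would observe that $\ell \in (\ell H):g$ always (since $\ell g \in \ell H$ requires $g \in H$, which need not hold — so actually I must be careful here: $\ell g \in \ell H \iff g \in H$, which is not assumed). The correct starting point is instead: since $g \notin (\ell)$ and $\ell$ is linear, $\ell$ is a nonzerodivisor modulo any ideal not containing a power... no. Let me restructure. The key algebraic fact to establish first is that $(\ell H):g \supseteq \ell R$ is generally false, so the argument must run the other way: I would argue that $\ell H : g$ is an ideal which, by hypothesis, contains some linear form $\lambda$. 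Then $\lambda g \in \ell H \subseteq (\ell)$, and since $g \notin (\ell)$ while $(\ell)$ is prime (as $\ell$ is a nonzero linear form in a polynomial ring, hence $R/(\ell)$ is a domain), we conclude $\lambda \in (\ell)$, so $\lambda$ is a scalar multiple of $\ell$, giving $\ell \in (\ell H):g$ and thus $\ell R \subseteq (\ell H):g$.

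Next I would prove the reverse inclusion $(\ell H):g \subseteq \ell R$. Take $h \in (\ell H):g$, so $hg \in \ell H \subseteq (\ell)$; again by primality of $(\ell)$ and $g \notin (\ell)$, we get $h \in (\ell)$, say $h = \ell h'$. Then $\ell h' g \in \ell H$, and since $\ell$ is a nonzerodivisor on $R$ (polynomial ring, $\ell \neq 0$), we may cancel to get $h' g \in H$. So far this only shows $(\ell H):g \subseteq \ell \cdot (H:g)$. To close the gap I would use that $H$ is proper and the colon $(\ell H):g$ already contains $\ell$; more precisely, I expect the intended hypothesis forces $H : g = R$ is too strong, so the cleaner route is: $(\ell H):g = \ell(H:g)$, and I claim $H : g = R$, i.e. $g \in H$ — but that contradicts nothing only if... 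Here I would reconsider and simply take the statement's phrasing at face value: the conclusion $(\ell H):g = \ell R$ combined with $(\ell H):g = \ell(H:g)$ (which follows from cancelling $\ell$ as above, valid since $\ell$ is a NZD) yields $H : g = R$ automatically once we know $(\ell H):g = \ell R$; conversely, from $\ell \in (\ell H):g$ we get $\ell g \in \ell H$ hence $g \in H$, hence $H:g = R$, hence $(\ell H):g = \ell(H:g) = \ell R$. So the chain is: linear form in the colon $\Rightarrow$ $\ell \in (\ell H):g$ $\Rightarrow$ $g \in H$ $\Rightarrow$ the colon equals $\ell R$.

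With $(\ell H):g = \ell R$ established, the resolution statement follows from a mapping cone on the short exact sequence
\[ 0 \to R/\big((\ell H):g\big)(-n) \xrightarrow{\;\cdot g\;} R/\ell H \to R/(\ell H, g) \to 0. \]
The left term is $R/(\ell)(-n)$, resolved by $0 \to R(-(n+1)) \xrightarrow{\ell} R(-n) \to R/(\ell)(-n) \to 0$. A minimal graded free resolution of $R/\ell H$ is $\mathcal{H}_{\bigcdot}(-1)$ augmented by $R$ in degree $0$ (multiplying all generators of $H$ by $\ell$ shifts the resolution of $R/H$ by $1$ in internal degree from homological degree $1$ onward; note $R/\ell H$ and $R/H$ differ, but their resolutions past the $R$ in homological degree $0$ are related by the shift since $\ell H \cong H(-1)$ as modules). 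Taking the mapping cone of the lift of $\cdot g$ to these resolutions produces a resolution of $R/(\ell H, g)$ with $H_1' = H_1(-1) \oplus R(-n)$, $H_2' = H_2(-1) \oplus R(-(n+1))$, $H_i' = H_i(-1)$ for $i > 2$; minimality of the cone is automatic because the comparison map $\mathcal{H}_{\bigcdot} \to$ (resolution of $R/(\ell)$) has no unit entries — any unit would force $g$ or a generator of $H$ to be a unit, contradicting properness. The final clause about projective dimension is then immediate: when $\operatorname{pd}(H) \geq 2$, the top of the resolution is unchanged by the shift and the added $R(-n), R(-(n+1))$ only affect homological degrees $1$ and $2$. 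The main obstacle I anticipate is the bookkeeping for the resolution of $R/\ell H$ versus $R/H$ and verifying the mapping cone is minimal; the colon computation itself is short once one invokes primality of $(\ell)$.
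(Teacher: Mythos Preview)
Your approach is essentially identical to the paper's: both establish $(\ell H):g = \ell R$ by sandwiching it between the hypothesized linear form and $(\ell):g = \ell R$ (you phrase this via primality of $(\ell)$, the paper via regularity of the sequence $g,\ell$, which are the same observation), and both then build the resolution from the short exact sequence $0 \to R/(\ell)(-n) \xrightarrow{\cdot g} R/\ell H \to R/(\ell H,g) \to 0$ (you say mapping cone, the paper says Horseshoe). One small wobble: your justification for minimality of the cone is not quite right---a unit entry in $u_1$ would mean $g$ is a weak associate of some minimal generator of $H$, not that a generator of $H$ is a unit---but the paper does not verify minimality at all, so you are if anything more careful.
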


\begin{proof}
Since $g\notin (\ell)$, the sequence $g,\ell$ is a regular sequence. Thus for some linear form $L$, 
\[L R \subseteq (\ell H):g\subseteq \ell R : g = \ell R,\]
which implies $LR=\ell R$ and $(\ell H):g = \ell R$. The second and third statements are an immediate consequence of the short exact sequence
\[ 0 \to R/(\ell)(-n) \xrightarrow{\cdot g} R/\ell H \to R/(\ell H,g) \to 0 \]
and the Horseshoe Lemma.
\end{proof}

\begin{defn}
For an $\mf{m}$-primary ideal $I$, the module $(I:\mf{m})/I$ is called the \textit{socle} of $R/I =: \bar{R}$. If $\{ \bar{f_1}, \dots, \bar{f_r} \}$ minimally generate the socle of $R/I$, then we say that $f_1,\dots, f_r$ are \textit{socle generators} for $I$.
\end{defn}

\begin{lem} \label{relations from socle}
Let $I$ be an $\mf{m}$-primary ideal. If a minimal generator of degree $n$ is a linear multiple of a socle generator, then $\beta_{2,n+1}(R/I)\geq 2$.
\end{lem}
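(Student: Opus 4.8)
The plan is to exhibit two minimal first syzygies of $R/I$ of degree $n+1$ that remain linearly independent in $\mathrm{Syz}_1(R/I)/\mathfrak m\,\mathrm{Syz}_1(R/I)$; this immediately gives $\beta_{2,n+1}(R/I)\ge 2$. Write the distinguished minimal generator as $f_1=\ell s$, where $\ell$ is a linear form and $s$ is a socle generator, so $\deg s=n-1$ and $\mathfrak m s\subseteq I$ (and $s\notin I$ automatically, since otherwise $f_1\in\mathfrak m I$). Extend $f_1$ to a minimal generating set $f_1,\dots,f_t$ of $I$ and let $e_1,\dots,e_t$ be the corresponding basis of the first free module in the minimal resolution.

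First I would construct candidate syzygies, one for each variable. Since $x_i s\in I$, write $x_i s=c_i f_1+\sum_{j\ge 2}a_{ij}f_j$ with $c_i\in k$ (the degree convention forces the $f_1$-coefficient to be a scalar). Multiplying by $\ell$ and using $f_1=\ell s$ yields $(x_i-c_i\ell)f_1=\sum_{j\ge 2}(\ell a_{ij})f_j$, hence a homogeneous syzygy $\sigma_i:=(x_i-c_i\ell)e_1-\sum_{j\ge 2}(\ell a_{ij})e_j$ of degree $n+1$ whose $e_1$-coordinate is the linear form $\lambda_i:=x_i-c_i\ell$.

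Two observations then finish the argument. The first: at least two of $\lambda_1,\dots,\lambda_N$ are $k$-linearly independent, since if they all lay on a common line $kw$ we would get $x_1,\dots,x_N\in\langle w,\ell\rangle$, forcing $N\le 2$; this is the only point at which $N\ge 3$ is used (and the statement does require $N\ge 3$, which is the relevant case here by Remark~\ref{artinian rem}). The second: every homogeneous element of $(\mathfrak m\,\mathrm{Syz}_1(R/I))_{n+1}$ has vanishing $e_1$-coordinate. For this, a homogeneous syzygy of degree $\le n$ has $e_1$-coordinate of degree $\le n-\deg f_1=0$ (Definition~\ref{syzygy defs}); if that degree is negative the coordinate is $0$, and if it is $0$ the coordinate is a scalar $b_1$ with $b_1 f_1\in(f_2,\dots,f_t)$, so $b_1=0$ by minimality of the generating set --- and multiplying by an element of $\mathfrak m$ preserves vanishing of the $e_1$-coordinate. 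Choosing $i,i'$ with $\lambda_i,\lambda_{i'}$ independent, any $k$-linear relation $\alpha\sigma_i+\beta\sigma_{i'}\in\mathfrak m\,\mathrm{Syz}_1(R/I)$ would force $\alpha\lambda_i+\beta\lambda_{i'}=0$ by the second observation, hence $\alpha=\beta=0$; so $\sigma_i$ and $\sigma_{i'}$ are linearly independent modulo $\mathfrak m\,\mathrm{Syz}_1(R/I)$, completing the proof.

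The routine calculations (writing out the $\sigma_i$ and the degree bookkeeping) are easy; the one point requiring a little care is the second observation --- recognizing that the $e_1$-coordinate furnishes a clean obstruction and that minimality of the generating set kills it in the degrees $\le n$. I do not expect any serious difficulty beyond that.
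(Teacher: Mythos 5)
Your proof is correct and uses essentially the same idea as the paper's: both manufacture degree $n+1$ syzygies from the relation $f_1=\ell s$ (by clearing $\ell$ against $\mf{m}s\subseteq I$) whose $e_1$-coordinates are linear forms, two of which can be taken linearly independent because $N\ge 3$. The paper's version is terser---it picks linear forms $\ell_1,\ell_2$ with $\ell,\ell_1,\ell_2$ independent and asserts the resulting relations are minimal ``by degree reasons and linear independence''---and your second observation is exactly the explicit content behind ``degree reasons,'' so the two arguments are the same proof written at different levels of detail.
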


\begin{proof}
Suppose $f$ is a minimal generator of degree $n$ and let $s$ be a socle generator with $f=\ell s$ for some linear form $\ell$. Choose two linear forms $\ell_1,\ell_2$ for which $\ell,\ell_1$, and $\ell_2$ are linearly independent. Then $\ell (\ell_i s) = \ell_i (\ell s)$, for $i=1,2$, are distinct linear relations, which must be minimal by degree reasons and linear independence. 
\end{proof}


\section{Double Link Trims} In this section we handle several technical proofs which contribute to the proof of the Main Theorem. There are four statements, each of which are concerned with double links of ideals satisfying $(\vardiamond)$, $(\star)$, or $(\star \star)$ (recall Definitions \ref{min link res}, \ref{main ideals}, and \ref{star star}, respectively). Ghost double links of ideals satisfying $(\vardiamond)$ and $(\star)$ repeat the conditions, and double links of ideals satisfying $(\star \star)$ with a drop in the type satisfy $(\vardiamond)$. 


\begin{prop} \label{minimal link of I trim}
Fix $n\geq 4$ and assume $L$ satisfies $(\vardiamond)$. Then any sequentially bounded double link of $L$ with respect to $(2,c,d)$ satisfies $(\star)$. 
\end{prop}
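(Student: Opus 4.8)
The plan is to fix an ideal $L$ satisfying $(\vardiamond)$ and a complete intersection $D \subseteq L$ of type $(2,c,d)$, set $L_1 = D:L$, and analyze a second link $L_2 = C_1 : L_1$ with $\operatorname{type}(C_1) \leq \operatorname{type}(D)$. By Proposition \ref{minimal link of I}, the minimal type of $L_1$ is exactly $(2,c,d)$, so the sequentially bounded condition forces $\operatorname{type}(C_1) = (2,c,d)$ as well. Hence $L_2$ is a \emph{ghost double link} of $L$ with respect to $(2,c,d)$: applying Ferrand's mapping cone (Theorem \ref{Ferrand}) twice, $L_2$ has a free resolution with the same terms as the resolution (\ref{L res}) of $L$, except for possible ghost terms $R(-2)$, $R(-c)$, $R(-d)$ in homological degrees $1$ and $2$. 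To prove $L_2$ satisfies $(\vardiamond)$ — wait, the statement claims it satisfies $(\star)$, so I need to be careful: a ghost double link of a $(\vardiamond)$-ideal using the \emph{minimal} type reproduces $(\vardiamond)$, but here the intermediate $D$ need not have minimal type in $L$, and the first link introduces the quadric-generated Koszul relation described just before resolution (\ref{J res}). So the right statement is that the extra ghost activity lands one in the class $(\star)$. Concretely, I will show: (i) the ghost terms $R(-2)$ and $R(-d)$ trim, and (ii) the ghost term $R(-c)$ either trims or, when it does not, the resulting Betti table is precisely that of resolution (\ref{J res}).

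First I would handle $R(-2)$: since $\beta_{1,j}(R/L_2) = 0$ for $j < 2$ (the ideal contains no linear form, as $\beta_{2,\cdot}$ shows two linear syzygies on a grade-$2$ subideal and Lemma \ref{max socle shift} rules out a linear generator for $n \geq 4$), the Koszul argument (Definition \ref{Koszul argument}) applied to the quadric and a higher generator forces $R(-2)$ in step $1$ to trim against the corresponding step-$2$ term, exactly as in the proof of Theorem \ref{thm:main}.i. Next, for $R(-d)$: since $g_3(L_1) = d$ by Proposition \ref{minimal link of I}, the degree-$d$ generator of $D$ is a minimal generator of $L_1$ in a way that lets Lemma \ref{ci shift trims}.i apply — I would check that the degree-$d$ generator of $C_1$ is a weak associate of that of $D$, which it must be since $\beta_{1,d}(R/L_1) \neq 0$ and $d$ is the top generating degree below the socle shift; then Lemma \ref{ci shift trims} gives $\beta_{1,d}(R/L_2) = \beta_{1,d}(R/L) = 0$. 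The remaining ghost term $R(-c)$ is the crux: I would split on whether the quadric and degree-$c$ generators of $D$ yield a minimal Koszul relation in $L$. If they do not, Lemma \ref{ci shift trims} (via weak associates, as the degree-$c$ generator is minimal in $L_1$ since $g_2(L_1) = c$) forces $R(-c)$ to trim, landing in $(\vardiamond) \subseteq$ the Betti table of (\ref{L res}); one then checks this is consistent with $(\star)$ only if $c = n+1$, which is forced by $g_2(L_1) = c$ combined with $g_2(L) = n$ and a degree count — here the parallel with the construction before (\ref{J res}), where $D \subseteq I$ had type $(2, n+1, 2n+4)$, is exactly the point. If they do yield a minimal Koszul relation, Ferrand's resolution of $L_2$ carries the extra $R(-c)$ term through, and comparing shifts term-by-term against (\ref{J res}) (with $c = n+1$, $d = 2n+4$ forced as above) shows the Betti table matches $(\star)$.

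The main obstacle I anticipate is pinning down that $(c,d) = (n+1, 2n+4)$ — that is, showing no \emph{other} sequentially bounded type $(2,c,d)$ with $(2,c,d)$ equal to the minimal type of $L_1$ can arise, or if it can, that the ghost double link still satisfies $(\star)$. The inputs are $g_2(L) = n$, $g_3(L) = 2n+4$ (the latter from Lemma \ref{max socle shift} and the top shift $3n+4$ in (\ref{L res})), which give $c \geq n$ and $d \geq 2n+4$ — but actually the cleanest route is: $L_2$ being a ghost double link of $L$ means its total Betti numbers equal those of $L$ up to the ghost terms, and then running the classification argument of Proposition \ref{ghost summands}/Lemma \ref{ghost g2}/Lemma \ref{ghost g3} in the present (simpler, $r = 2$) setting shows the only non-trivial ghost term that can survive produces exactly resolution (\ref{J res}). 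I would lean heavily on Lemma \ref{max degree relations} to kill spurious relations above $d$, on Lemma \ref{g1q1} to read off $g_2$ from the Betti table at each stage, and on Lemma \ref{max socle shift} to derive the contradictions (as in the proof of Proposition \ref{minimal link of I}) whenever a hypothetical drop would force $L_2$ or a further link to be a complete intersection with the wrong top shift.
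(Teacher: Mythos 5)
Before anything else: the statement you are proving contains a typo. The sentence immediately preceding the proposition (``we show in Proposition \ref{minimal link of I trim} that any sequentially bounded double link of $L$ with respect to $(2,c,d)$ satisfies $(\vardiamond)$'') and the last line of the paper's own proof (``so $L_2$ satisfies $(\vardiamond)$'') both give $(\vardiamond)$, as does the classification logic in the proof of Theorem \ref{thm:main}.ii, which needs $(\vardiamond)$ to be closed under sequentially bounded double links. You noticed this inconsistency early on, correctly suspected that the answer should be $(\vardiamond)$, and then talked yourself out of it. That wrong turn is fatal to the rest of the argument.

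The fallback you invent to rescue the $(\star)$ conclusion --- that when the ghost term $R(-c)$ fails to trim, the resulting Betti table ``matches'' resolution (\ref{J res}) --- cannot be right. The step-$1$ shift multisets of the two classes are $\{2,\,n,\,n,\,n+2,\,2n+4\}$ for $(\vardiamond)$ and $\{2,\,n+1,\,n+1,\,n+3,\,2n+4\}$ for $(\star)$. A ghost double link never deletes any of the original terms ($\beta_{i,j}(R/L_2)\geq\beta_{i,j}(R/L)$ for $1\leq i\leq 2$), so adjoining one new shift $R(-c)$ to the $(\vardiamond)$ multiset cannot give the $(\star)$ multiset; for instance $\beta_{1,n}(R/L_2)\geq\beta_{1,n}(R/L)=2$, whereas $\beta_{1,n}=0$ for any $(\star)$-ideal. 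The proof must therefore show that $R(-c)$ \emph{always} trims, which is exactly what the paper does: if $R(-c)$ survived, the resolution $\mathcal{P}_\bullet$ of $L_1$ from Proposition \ref{minimal link of I} would force $c\in\{2n+1,\,3n+2\}$; the value $3n+2$ dies by Lemma \ref{max degree relations}, and $c=2n+1$ dies by linking $L_2$ further with a complete intersection of type $(2,n,2n+4)$ and exhibiting a degree-$(n+2)$ socle generator that violates Lemma \ref{relations from socle}. Your proposal contains no argument for this branch --- only the incorrect claim that the Betti table lands on (\ref{J res}).

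Two smaller issues are also worth flagging. For $R(-2)$ you invoke the Koszul argument, but the direct and correct reason is Lemma \ref{ci shift trims}.i: since $\beta_{1,2}(R/L)=1$ and $\mathfrak{m}L$ has nothing in degree $2$, the quadric generator of $D$ is automatically a minimal generator of $L$, so the ghost $R(-2)$ trims. For $R(-d)$ you assert that the degree-$d$ generator of $D_1$ ``must be a weak associate of that of $D$, since $\beta_{1,d}(R/L_1)\neq 0$'' --- that inference is not valid on its own, because $L_1$ might have additional minimal generators of degree $d$ coming from the Ferrand shifts, and a priori the $D_1$-generator could be built from those instead. The paper handles this by contradiction: it supposes the degree-$d$ generator of $D$ is not minimal in $L$ and that the weak-associate condition fails, deduces $d\geq 2n+5$ and $d=3n+2$ from the Betti numbers of $\mathcal{P}_\bullet$, and rules this out with Lemma \ref{max degree relations}.
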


\begin{proof}
Let $D\subseteq L$ be a complete intersection of type $(2,c,d)$ and set $L_1=D:L$. Recall from Proposition \ref{minimal link of I} that $c\geq n$, $d\geq 2n+4$, and $D$ achieves the minimal type in $L_1$. Let $D_1\subseteq L_1$ be a complete intersection of minimal type and set $L_2=D_1:L_1$. Then $L_2$ is a ghost double link of $L$ with
\[ \beta_{i,j}(R/L)\leq \beta_{i,j}(R/L_2) \leq  \beta_{i,j}(R/L)+1 \text{ for } 1\leq i\leq 2 \text{ and } j\in\{c,d\},\] 
\par To show $\beta_{i,d}(R/L_2)=\beta_{i,d}(R/L)$ for $i=1,2$, by Lemma \ref{ci shift trims}.i. we may assume the degree $d$ generator of $D$ is not minimal in $L$ and the generator of degree $c$ in $D_1$ is not a weak associate of the degree $c$ generator of $D$. In particular, since $g_3(L)=2n+4$, we may assume $d\geq 2n+5$. Moreover, since $g_2(L_1)=c$, we have $c=c+d-3n-2$ or $c=c+d-2n-1$. The latter cannot happen because $d>2n+1$, so $c=c+d-3n-2$, which means $d=3n+2$. But $\sum_{j>3n+2} \beta_{2,j}(R/L_2)=1$, so $\beta_{i,3n+2}(R/L_2)=0$ by Lemma \ref{max degree relations}. Thus $\beta_{i,d}(R/L_2)=\beta_{i,d}(R/L)$ for $i=1,2$.
\par Assume $\beta_{i,c}(R/L_2)\neq \beta_{i,c}(R/L)$ for $i=1,2$. Then repeating the argument in the previous paragraph we see $c+d-3n-2=d$ or $c+d-2n-1 = d$, so $c=2n+1$ or $c=3n+2$. The latter case yields a contradiction with Lemma \ref{max degree relations}. If $c=2n+1$, then $L_2$ contains a complete intersection $C_2$ with type $(2,n,2n+4)$. The link $L_3=C_2:L_2$ has a free resolution $\mc{P}_{\bigcdot}''$ with terms
\begin{align} \notag
P_1'' &= R(-2)^2 \oplus R(-n) \oplus R(-(n+3)) \oplus R(-(2n+4)) \\ \notag
P_2'' &= R(-3) \oplus R(-(n+1)) \oplus R(-(n+4)) \oplus R(-(n+5)) \\ \notag
&\oplus R(-(2n+3)) \oplus R(-(2n+5))^2 \\ \notag
P_3'' &= R(-(n+5)) \oplus R(-(2n+4)) \oplus R(-(2n+6)) 
\end{align}
Since $\beta_{i,2n+1}(R/L_2)=1$ for $i=1$, we have $\beta_{3,n+5}(R/L_3)=1$, so there is a socle generator $p$ for $L_3$ with degree $n+2$. However, by Lemma \ref{g12l}ii., the sub-ideal $(L_3)_{\leq n}$ is a grade $2$ perfect ideal. Thus there is a linear form $\ell$ regular on $(L_3)_{\leq n}$, and since $\ell p\notin (L_3)_{\leq n}$, we may assume that $\ell p$ is part of a minimal generating set of $L$. This is impossibe by Lemma \ref{relations from socle}. Thus $\beta_{i,c}(R/L_2)\neq \beta_{i,c}(R/L)$ for $i=1,2$ trims, so $L_2$ satisfies $(\vardiamond)$.
\end{proof}



\begin{prop} \label{H a drop}
Fix $n\geq 4$ and assume $H$ satisfies $(\star \star)$. If there is a sequence of links $H\widesim{(2,a,b)} H_1 \widesim{(2,a-1,b)} H_2$, then $H_2$ satisfies $(\vardiamond)$.
\end{prop}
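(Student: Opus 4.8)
\textbf{Proof proposal for Proposition \ref{H a drop}.}

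The plan is to run Ferrand's mapping cone twice and control the ghost terms by combining the structural information already available. First I would recall the resolution (\ref{H1 res}) of $H_1 = C:H$ with $C$ of type $(2,a,b)$, specialized using the constraints $g_2(H) = n+1$, $g_3(H) \geq 2n+4$, so $a \geq n+1$ and $2n+4 \leq b \leq 2n+5$. By Lemma \ref{ghost g2}, the hypothesis that $(2,a-1,b)$ is a complete intersection in $H_1$ that does not exceed $(2,a,b)$ forces $g_2(H_1) = a-1$ and $g_3(H_1) = b = 2n+4$, and moreover $\beta_{1,a}(R/H_1) \neq 0$ with the degree $b$ generator of $C$ minimal in $H$. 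I would then write down Ferrand's induced resolution of $H_2 = D:H_1$ where $D$ has type $(2,a-1,2n+4)$ and $a = a_1+a_2+a_3$-style bookkeeping gives the shifts; the terms are those of $H$ shifted appropriately, together with ghost terms $R(-2)$, $R(-(a-1))$, and $R(-(2n+4))$ in steps 1 and 2.

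Next I would eliminate the ghost terms one at a time. The $R(-2)$ term trims because the quadric is a minimal generator of $H$ (a subset of the generating set of $D$ is part of a minimal generating set of $H_2$'s predecessor), so Proposition \ref{min Koszul relation}.i applies; alternatively the Koszul argument on the quadric and the next generator forces the trim. The $R(-(2n+4))$ term trims by Lemma \ref{ci shift trims}.i applied with $j$ the index of degree $2n+4$: since the degree $b = 2n+4$ generator of $C$ is minimal in $H$ (established above via Lemma \ref{ghost g2}), the corresponding term in step 2 of (\ref{H1 res}) trims with step 3, and the induced ghost term in $H_2$ is then free to trim. The remaining term $R(-(a-1))$ is the delicate one: I would argue that since $g_2(H_1) = a-1$, the degree $a-1$ generator of $D$ is a minimal generator of $H_1$; I then need the degree $a-1$ generator of $D$ to be a weak associate of a minimal generator of $H$, OR I need to invoke Lemma \ref{ci shift trims}.ii with the roles of the indices arranged so that $f_k, f_\ell$ are weak associates of $g_k, g_\ell$. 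Tracking which of the three generators of $D$ sits in $\mf{m}H$ (it must be the degree $a-1$ one, since the quadric and the degree $2n+4$ generator are minimal in $H$) and then applying Corollary \ref{construct Koszul} shows the quadric and degree $2n+4$ generator yield a minimal Koszul relation in $H_1$; this is exactly the hypothesis needed for Lemma \ref{ci shift trims}.ii to force $\beta_{1,a-1}(R/H_2) = \beta_{1,a-1}(R/H)$ and $\beta_{2,a-1}(R/H_2) = \beta_{2,a-1}(R/H)$.

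Once all three ghost terms trim, the Betti table of $H_2$ is forced to equal the Betti table of $H$ with $s$ and $t$ both collapsed, i.e. the table of an ideal with resolution (\ref{J res}) — wait, I must be careful: $H$ satisfies $(\star\star)$, so its table is that of (\ref{J res}) modified by the extra $s$ copies in degree $2n+2$ and $t$ copies in degree $2n+5$. I would show that the drop in the complete intersection type from $b$ to $b = 2n+4$ already in $H_1$, together with Lemma \ref{max degree relations} applied to the highest generating degree, forces $s = t = 0$ (the degree $2n+2$ and $2n+5$ "extra" syzygies of $H$ cannot survive a link that realizes minimal type, since $g_3(H_1) = 2n+4$ pins the socle). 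Combined with the ghost trims, this yields exactly the Betti table of (\ref{J res}), so $H_2$ satisfies $(\vardiamond)$ — no, $(\star)$ is (\ref{J res}); $(\vardiamond)$ is (\ref{L res}). I would recheck: the statement claims $H_2$ satisfies $(\vardiamond)$, so the bookkeeping of shifts through the double link must land on (\ref{L res}), not (\ref{J res}); this is consistent with the pattern in Lemma \ref{ghost g2} and Proposition \ref{ghost summands} that a genuine drop in type exits the $(\star\star)$ family into $(\vardiamond)$. The main obstacle I anticipate is precisely the degree $a-1$ ghost term: pinning down that the degree $a-1$ generator of $D$ (and not the quadric or degree-$(2n+4)$ generator) is the one lying in $\mf{m}H$, and then verifying the weak-associate hypotheses of Lemma \ref{ci shift trims}.ii hold for the link $H_1 \to H_2$, requires careful tracking of which generators are forced to be minimal at each stage — essentially the same kind of unit-entry chase used in the proof of Lemma \ref{ghost g3}.
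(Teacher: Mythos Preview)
Your plan has the right skeleton (run Ferrand twice, use Lemma~\ref{ghost g2} to pin $b=2n+4$ and $g_3(H_1)=b$), but the heart of the argument is missing. Two concrete problems.

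First, Lemma~\ref{ci shift trims} is stated only for ghost double links, i.e.\ when $\text{type}(C)=\text{type}(D)$. Here the types are $(2,a,b)$ and $(2,a-1,b)$, so neither part of that lemma is directly available. The paper does invoke a ``similar argument,'' but what that argument actually buys is a reduction: either the $R(-(a-1))$ ghost trims for easy reasons, or the degree~$b$ form in $D$ is \emph{not} a weak associate of the one in $C$, and since $g_3(H_1)=2n+4$ this forces $2n+4\in\{a-n+1,a+2\}$ and hence $a=2n+2$. Your Corollary~\ref{construct Koszul} maneuver is also tangled: you speak of generators of $D$ lying in $\mf{m}H$, but $D\subseteq H_1$, not $H$; and even after straightening this out, the conclusion of \ref{ci shift trims}.ii would concern degree~$a$, not $a-1$.

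Second, and more seriously, you have no mechanism for killing the $R(-(2n+1))^s$ and $R(-(2n+4))^t$ terms carried over from the $(\star\star)$ structure of $H$. Your one-line appeal to Lemma~\ref{max degree relations} does not do this; that lemma bounds the grade of a sub-ideal, it does not force $s=t=0$. The paper's method is to take one \emph{further} link $H_2\widesim{(2,n,2n+4)} H_3$ and analyze the socle of $H_3$: Lemma~\ref{g12l} shows $(H_3)_{\le n}$ is grade~$2$ perfect, ruling out socle generators in low degree (so $\beta_{3,n+2}(R/H_3)=0$, which kills the $t$~terms), and Lemma~\ref{relations from socle} combined with $\beta_{2,n+4}(R/H_3)=1$ rules out a socle generator in degree~$n+2$ (so $\beta_{3,n+5}(R/H_3)=0$, which kills the $s$~terms together with the $a-1=2n+1$ ghost). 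Tracing these vanishings back to $H_2$ is what lands you on the $(\vardiamond)$ table rather than $(\star)$. Without this third link and the two socle lemmas, the proof does not close.
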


\begin{proof}
Assume $H_1=C:H$ with $\text{type}(C)=(2,a,b)$. The proof of Lemma \ref{ghost g2} shows that such a sequence of links implies that $b=2n+4$ and $(2,a-1,b)$ is the minimal type of $H_1$. The graded free resolution of $H_1$ is described by resolution (\ref{H1 res}) in the proof of Proposition \ref{ghost summands}. It follows that the graded free resolution of $H_2$ is the same as resolution (\ref{L res}) from which $(\vardiamond)$ is defined, except possibly with $R(-(a-1))\oplus R(-(2n+1))^{s} \oplus R(-(2n+4))^t$ in steps $1$ and $2$ of the resolution. We claim this trims. 
\par By a similar argument to the proof of Lemma \ref{ci shift trims}, we may assume the degree $a$ generator of $C$ is not a minimal generator of $H$, and that the degree $b$ form used to define $H_2$ is not a weak associate of the degree $b$ generator of $C$. Since $b=2n+4$ and $g_3(H_1)=b$, we have $2n+4\in \{ a-n+1,a+2\}$, so $a\in \{3n+3,2n+2\}$. But $a\leq 2n+4 < 3n+3$, so $a=2n+2$. By Lemma \ref{g1q1}, $H_2$ contains a complete intersection $C_2$ with type $(2,n,2n+4)$. The link $H_3=C_2:H_2$ has a free resolution $\mc{G}_{\bigcdot}''$ with terms
\begin{align} \notag
G_1'' &= R(-2)^2 \oplus R(-n) \oplus R(-(n+3)) \oplus R(-(2n+4)) \\ \notag
G_2'' &= R(-3) \oplus R(-(n+1)) \oplus R(-(n+4)) \oplus \\ \notag
&R(-(2n+3)) \oplus R(-(2n+5))^2 \oplus R(-(n+2))^t \oplus R(-(n+5))^{s+1} \\ \notag
G_3'' &= R(-(2n+4)) \oplus R(-(2n+6)) \oplus R(-(n+2))^t \oplus R(-(n+5))^{s+1}.
\end{align}
By Lemma \ref{g12l}, the sub-ideal $H':=(H_3)_\leq n$ has grade $2$ and is perfect. If $t>0$, then $\beta_{3,n+2}(R/H_3)>0$, which implies $\beta_{3,n+2}(R/H')>0$ (since $(H')_{\leq n} = H'$). Thus $t=0$. Moreover, if $\beta_{3,n+5}(R/H_3)>0$, then the degree $n+3$ minimal generator of $H_3$ must be a linear multiple of a socle generator for $H_3$. But $\beta_{2,n+4}(R/H_3)=1$, which contradicts Lemma \ref{relations from socle}. Therefore, $s+1=0$, and tracing the units back to the resolution of $H_2$, we see that $H_2$ satisfies $(\vardiamond)$.
\end{proof}


\begin{prop} \label{H b drop}
Fix $n\geq 4$ and assume $H$ satisfies $(\star \star)$. If there is a sequence of links $H\widesim{(2,a,b)} H_1 \widesim{(2,a,b-1)} H_2$, then $H_2$ satisfies $(\vardiamond)$.
\end{prop}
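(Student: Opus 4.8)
The argument will run in close parallel to that of Proposition \ref{H a drop}, with the roles of the second and third generators of the intermediate complete intersection interchanged. The first step is to pin down $H_1$. Since $H_1$ contains a complete intersection of type $(2,a,b-1)$ we have $g_3(H_1)\le b-1<b$, so Lemma \ref{ghost g3} applies and forces $a=2n+4$, $b=2n+5$, and $g_3(H_1)=2n+4$; moreover, by (the proof of) Lemma \ref{ghost g2} we already know $g_2(H_1)=a$ in this situation, so the minimal type of $H_1$ is $(2,2n+4,2n+4)=(2,a,b-1)$ and the link $H_1\widesim{(2,a,b-1)}H_2$ is in fact a minimal link. Substituting $a=2n+4$, $b=2n+5$ into resolution (\ref{H1 res}) then gives an explicit (not necessarily minimal) graded free resolution of $R/H_1$ whose only ``extra'' summands beyond the stable part are the ghost-type terms controlled by the parameters $s,t$ of $(\star\star)$ together with the summand $R(-b)=R(-(2n+5))$, which may or may not trim in the resolution of $H_1$.

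Next I would form $H_2=C_1:H_1$ with $\mathrm{type}(C_1)=(2,2n+4,2n+4)$ and write out Ferrand's mapping cone resolution (Theorem \ref{Ferrand}), then compare the resulting graded free modules term by term with resolution (\ref{L res}) defining $(\vardiamond)$. The comparison should show that the two resolutions agree except for a short list of ghost terms, each of whose appearance is governed by (i) the value of $t$, (ii) the value of $s$, and (iii) whether $R(-(2n+5))$ trims in steps $1$ and $2$ of the resolution of $H_1$ -- equivalently, by Proposition \ref{min Koszul relation}, whether the degree $2n+5$ generator of $C$ is a minimal generator of $H$. Thus the entire proposition reduces to showing $s=t=0$ and that $R(-(2n+5))$ trims.

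To establish those vanishings I would continue linking $H_2$ downward, exactly in the style of Proposition \ref{H a drop}: first assume (for contradiction) that the degree $a$ generator of $C$ is not minimal in $H$ and the degree $b$ form used to define $H_2$ is not a weak associate of the degree $b$ generator of $C$, use Lemma \ref{ci shift trims} and $g_3(H_1)=b$ to narrow the possibilities, then invoke Lemma \ref{g1q1} to locate a complete intersection $C_2\subseteq H_2$ of a known small type (I expect $(2,n,2n+4)$ after the forced trims) and set $H_3=C_2:H_2$. Reading off Ferrand's resolution of $H_3$, one then applies: Lemma \ref{g12l} to see that a low-degree truncation of $H_3$ is a grade $2$ perfect ideal, which kills the ghost contribution measured by $t$; Lemma \ref{relations from socle} to see that a degree $n+3$ minimal generator of $H_3$ cannot be a linear multiple of a socle generator in the presence of the degree $n+4$ linear-syzygy pattern, which kills the contribution measured by $s$; and the Koszul argument (Definition \ref{Koszul argument}) together with Lemma \ref{max socle shift} to rule out $H_3$ being a complete intersection of the wrong type. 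Finally one traces the resulting unit entries back through the chain of mapping cones $H_3\to H_2\to H_1$ using Proposition \ref{min Koszul relation} and Lemma \ref{ci shift trims}, concluding either that $R(-(2n+5))$ must trim in the resolution of $H_1$, or that the degree $2n+5$ generator of $C$ is a minimal generator of $H$ -- the latter contradicting the fact that $\beta_{1,2n+5}(R/H)$ is as small as $(\star\star)$ permits. Assembling these, $s=t=0$, $R(-(2n+5))$ trims, and $H_2$ satisfies $(\vardiamond)$.

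The main obstacle will be the bookkeeping in this last step: there are several subcases according to which generators of $C$ and $C_1$ are minimal, are weak associates, or yield minimal Koszul relations, and in each one must follow a chain of trims (unit entries in the differentials) through two or three successive Ferrand mapping cones while keeping all graded shifts straight. This is routine in spirit -- it is exactly the ``trace the units back'' technique used repeatedly in Sections 3 and 4 and in Proposition \ref{H a drop} -- but it is where essentially all of the work lies; the remainder is a direct substitution into (\ref{H1 res}) and a term-by-term comparison with (\ref{L res}).
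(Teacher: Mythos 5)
Your outline matches the paper's proof: both pin down $a=2n+4$, $b=2n+5$ via Lemma \ref{ghost g3}, write out the resolution of $H_2$ (the paper reuses resolution (\ref{H2 res}) from Lemma \ref{ghost g3} with $a=2n+4$ substituted, rather than re-deriving from (\ref{H1 res})), then link down to $H_3$ with respect to $(2,n,2n+4)$ and use Lemma \ref{g12l}, Lemma \ref{relations from socle}, and the trace-back of unit entries to force $s=t=0$ and the remaining ghost terms to trim. The paper's argument is a bit more direct in that it dispenses with the Lemma \ref{ci shift trims}-style dichotomy you sketch and handles all the trims at once through the single link to $H_3$, but the essential strategy and the key lemmas invoked are the same.
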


\begin{proof}
Using the proof of Lemma \ref{ghost g3}, the sequence of links implies $a=2n+4$, $b=2n+5$. Substituting these values into the terms of $(\ref{H2 res})$, we see that $H_2$ has a free resolution $\mc{G}_{\bigcdot}''$ with terms
\begin{align} \notag
G_1'' &= R(-2) \oplus R(-n)^2 \oplus R(-(n+2)) \\ \notag
&\oplus R(-(2n+1))^s \oplus R(-(2n+3)) \oplus R(-(2n+4))^{t+2} \\ \notag
G_2'' &= R(-(n+1))^2 \oplus R(-(n+3)) \oplus R(-(2n+1))^s \oplus R(-(2n+2)) \\ \notag
&\oplus R(-(2n+3)) \oplus R(-(2n+4))^{t+1} \oplus R(-(2n+5))^{2} \oplus R(-(3n+3)) \\ \notag
G_3'' &= R(-(2n+3)) \oplus R(-(2n+5)) \oplus R(-(3n+4)).
\end{align}
We show $\beta_{i,2n+5}(R/H_2)=0$ for $i=2,3$, that $\beta_{i, 2n+1}(R/H_2)=\beta_{i, 2n+3}(R/H_2)= 0$ for $i=1,2$, and that $\beta_{1,2n+4}(R/H_2)=1$ and $\beta_{2,2n+4}(R/H_2)=0$.
\par By Lemma \ref{g1q1}, we have $g_2(H_2)=n$. So let $H_3$ be the link of $H_2$ with respect to $(2,n,2n+4)$. Then $H_3$ has a free resolution $\mc{G}_{\bigcdot}'''$ with terms
\begin{align} \notag
G_1''' &= R(-2)^2 \oplus R(-n) \oplus R(-(n+1)) \oplus R(-(n+3)) \oplus R(-(2n+4)) \\ \label{G3 ha}
G_2''' &= R(-3) \oplus R(-(n+1))^2 \oplus R(-(n+4)) \oplus R(-(2n+3)) \\ \notag
&\oplus R(-(2n+5))^2 \oplus R(-(n+2))^{t+1} \oplus R(-(n+3)) \oplus R(-(n+5))^s \\ \notag  
G_3''' &= R(-(n+2))^{t+1} \oplus R(-(n+3)) \oplus R(-(n+5))^s \\ \notag
&\oplus R(-(2n+4)) \oplus R(-(2n+6)) 
\end{align}
Lemma \ref{g12l} implies $R(-(n+1))$ in $G_1'''$ and $G_2'''$ trim, and that $(H_3)_{\leq n}$ is a perfect grade $2$ ideal. Thus there cannot be socle generators for $H_3$ of degrees $n-1$ or $n$, which implies $R(-(n+2))^{t+1}\oplus R(-(n+3))$ in $G_2'''$ and $G_3''$ trim. Lastly, as we have seen before, the fact that $(G_3)_{\leq n}$ is a grade $2$ perfect ideal (by Lemma \ref{g12l}) and $\beta_{2,n+4}(R/H_3)=1$ implies $\beta_{3,n+5}(R/H_3)=0$. Tracing back the unit entries to (\ref{G3 ha}), we have all of the desired trims to conclude $H_2$ satisfies $(\vardiamond)$.  
\end{proof}


\begin{prop} \label{big a1 trim}
Fix $n\geq 4$ and assume $J$ satisfies $(\star)$. Then any sequentially bounded double link of $J$ with respect to $(a_1,a_2,a_3)$, where $a_1>2$, satisfies $(\star)$.
\end{prop}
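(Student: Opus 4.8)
The plan is to run the argument of Proposition~\ref{minimal link of I trim} one class up, with $(\star)$ in place of $(\vardiamond)$. The first point is that any sequentially bounded double link of $J$ with respect to $(a_1,a_2,a_3)$ is automatically a \emph{ghost} double link. Indeed, since $a_1>2$ no generator of $C$ has degree $2$, while one reads off from~(\ref{J res}) that the only pair of minimal generators of $J$ whose Koszul syzygy can be a minimal syzygy consists of the quadric and the degree-$(2n+4)$ generator; hence no pair of generators of $C$ yields a minimal Koszul relation in $J$, so $d(J_1)=r(R/J)=3$ for $J_1=C:J$. Together with Proposition~\ref{without quadratic} (which gives $(a_1,a_2,a_3)$ as the minimal type of $J_1$), this forces the second link to use type $(a_1,a_2,a_3)$ as well, and $J_2=C_1:J_1$ is a ghost double link of $J$ with respect to $(a_1,a_2,a_3)$. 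By the remark after Definition~\ref{ghost def}, the Betti table of $J_2$ coincides with that of $J$ except possibly at the ghost terms $R(-a_1),R(-a_2),R(-a_3)$ in homological steps $1$ and $2$, each contributing at most $1$; so the whole content of the proposition is that all three ghost terms trim.

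Write $C=(g_1,g_2,g_3)$ and $C_1=(f_1,f_2,f_3)$ with $\deg g_i=\deg f_i=a_i$, and record that $\beta_{1,j}(R/J)\neq0$ only for $j\in\{2,n+1,n+3,2n+4\}$, that $g_1(J)=2<a_1$, $g_2(J)=n+1$, $g_3(J)=2n+4$, and that $g_2(J_1)=a_2$, $g_3(J_1)=a_3$ (Proposition~\ref{without quadratic}). The tool is Lemma~\ref{ci shift trims}: the ghost term $R(-a_j)$ trims if $g_j$ is a minimal generator of $J$, or if $g_j\in\mf{m}J$ and $f_k,f_\ell$ are weak associates of $g_k,g_\ell$ in $J_1$ (where $\{j,k,\ell\}=\{1,2,3\}$). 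So I would assume $R(-a_j)$ does not trim: then $g_j\in\mf{m}J$, and the generator of $C_1$ of one of the other two degrees is not a weak associate of the corresponding generator of $C$; since $g_2(J_1)=a_2$ and $g_3(J_1)=a_3$ make those generators of $C_1$ minimal in $J_1$, this pins the offending degree $a_k$ (or $a_\ell$) to one of the new generating degrees $a-3n-5,\ a-2n-7,\ a-2n-4$ occurring in the Ferrand resolution~(\ref{J1 res no quad}) of $J_1$, where $a=a_1+a_2+a_3$. Running $j=3$, then $j=2$, then $j=1$, one obtains in each case a short list of degree identities among $a_1,a_2,a_3$.

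Every such identity is then eliminated with the standard toolkit. The highest ghost term $R(-a_3)$ should fall quickly to Lemma~\ref{max degree relations}, exactly as the degree-$d$ case did in Proposition~\ref{minimal link of I trim}: an extra top-degree generator leaves no room for the number of high-degree syzygies that the grade-$2$ truncation would require. The remaining identities are disposed of either directly by Lemma~\ref{max socle shift}, which forbids the resulting complete-intersection types, or by forming one or two further minimal links of $J_2$ — in the style of Proposition~\ref{minimal link of I} and Lemmas~\ref{ghost g2}, \ref{ghost g3} — and tracing the resulting unit entries back through the iterated mapping cones with Corollary~\ref{trims} and Proposition~\ref{min Koszul relation}, until one reaches a complete intersection of a type forbidden for $n\geq4$ by Lemma~\ref{max socle shift} (typically $(1,n-1,2n+3)$), a socle generator contradicting Lemma~\ref{relations from socle}, or an ideal of projective dimension $4$ contradicting Remark~\ref{artinian rem}. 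Throughout we may assume $N=3$ and pass to an Artinian reduction by Remark~\ref{artinian rem}.

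The main obstacle will be the smallest ghost term $R(-a_1)$ when $a_1$ is small — say $a_1=3$, which forces $g_1$ to be a multiple of the quadric. Then neither clause of Lemma~\ref{ci shift trims} is available at $j=1$, so one is committed to the degree-coincidence route, and the relations it forces on $J_1$ are the most resistant, needing the longest chains of auxiliary links and the careful ghost/trim bookkeeping of Appendix~B. A secondary point to pin down is that the degree-coincidence dichotomy is genuinely exhaustive, i.e., that the generators of $C_1$ realizing $g_2(J_1)$ and $g_3(J_1)$ may be taken as part of a minimal generating set of $J_1$.
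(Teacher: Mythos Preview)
Your framework is exactly the paper's: Proposition~\ref{without quadratic} plus the ghost-double-link remark reduce everything to trimming the three ghost summands, and Lemma~\ref{ci shift trims} together with the degree-coincidence bookkeeping against~(\ref{J1 res no quad}) is the right engine. The plan is sound and would succeed, but you have the difficulty inverted.

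The small-$a_1$ case you flag as the main obstacle is in fact the easiest: when $a_1\leq n+1$, the only minimal generator of $J_2$ in degree $<a_1$ is the quadric, so there can be no minimal syzygy of degree $a_1$, and the ghost $R(-a_1)$ trims in one line with no auxiliary links and no appeal to Lemma~\ref{ci shift trims}. (The case $a_1=n+2$ needs a short explicit syzygy computation, and $a_1\geq n+3$ uses the degree-coincidence route you describe.) The real work, and the longest step in the paper, is the $a_2$ ghost term: it requires partitioning the interval $n+2\leq a_2\leq 3n+2$ into several sub-cases, each handled separately by the Koszul argument, Lemma~\ref{max degree relations}, Lemma~\ref{g12l} with Lemma~\ref{relations from socle}, or by forming an auxiliary link of $J_2$ with respect to a small type such as $(2,n+1,2n+4)$ or $(2,a_2,2n+4)$ and reading off a contradiction from the resulting Betti table. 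The $a_3$ step is indeed short, but it is closed out by degree arithmetic forcing $a_1$ into a forbidden finite set rather than by a direct appeal to Lemma~\ref{max degree relations}.

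One structural point: the paper's order is $a_1$, then $a_3$, then $a_2$, and this matters. The $a_1$ trim (Step~1) is used as an input in Step~2, and both earlier trims are implicitly used in Step~3 when writing down the Ferrand resolution of the auxiliary link $J_3$. Running $j=3,2,1$ as you propose would require carrying extra ghost terms through those auxiliary computations.
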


\begin{proof}
Let $C\subseteq J$ be a complete intersection with type $(a_1,a_2,a_3)$, where $a_1>2$. By Proposition \ref{without quadratic}, we know that $(a_1,a_2,a_3)$ is the minimal type in $J_1=C:J$. If $C_1\subseteq J_1$ is a complete intersection with type $(a_1,a_2,a_3)$ and $J_2=C_1:J_1$, then $J_2$ is a ghost double link of $J$. Thus to prove $J_2$ satisfies $(\star)$, we need to show $\beta_{i,a_j}(R/J_2)=\beta_{i,a_j}(R/J)$ for $1\leq i\leq 2$ and $1\leq j\leq 3$. By Lemma \ref{ci shift trims}, we may assume that for $\{i,j,k\}=\{1,2,3\}$, the degree $a_i$ generator of $C$ is not minimal in $J$, and that the degree $a_j$ and $a_k$ generators of $C_1$ are not both weak associates of the degree $a_j$ and $a_k$ generators of $C$, respectively. \\
 
\noindent \textbf{Step 1:} Show $\beta_{i,a_1}(R/J_2)=\beta_{i,a_1}(R/J)$ for $i=1,2$. \\ 

We consider three cases: when $a_1\leq n+1$, when $a_1=n+2$, and when $a_1\geq n+3$. 
\par If $a_1\leq n+1$, then since $\sum_{j\leq n} \beta_{1,j}(R/J_2)=1$, we have $\text{Syz}_1(R/J_2)_{\leq n+1}=0$. Thus $\beta_{i,a_1}(R/J_2)=0=\beta_{i,a_1}(R/J)$ for $1\leq i\leq 2$. 
\par Assume $a_1=n+2$ and that $\beta_{i,a_1}(R/J_2)=1$ for $1\leq i\leq 2$. Write $(J_2)_{\leq n+1} = (q,f_1,f_2)$. If $q,f_1$ is a regular sequence, then using the mapping cone induced by 
\[ 0\to R/(q,f_1):f_2(-(n+1)) \xrightarrow{ \cdot f_2} R/(q,f_1)\to R/(q,f_1,f_2)\to 0\]
to resolve $R/(q,f_1,f_2)$, it follows that $(q,f_1):f_2$ contains three linearly independent linear forms, which is impossible since one of the linear forms is regular on $R/(q,f_1)$ and $f_2\notin (q,f_1)$. By the same reasoning, neither $q,f_2$ or $f_1,f_2$ can be a regular sequence. In particular, $g_2(J_2)>n+1$, so we can write $(q,f_1,f_2)=\ell(\ell',g_1,g_2)$ for some linear forms $\ell$ and $\ell'$ and degree $n$ forms $g_1$ and $g_2$. Further, using the short exact sequence
\[ 0\to R/(\ell',g_1,g_2) \xrightarrow{\cdot \ell} R/(q,f_1,f_2) \to R/(\ell)\to 0,\]
we see that there are linear forms $L$ and $L'$ and a degree $n-1$ form $h$ for which $g_1=Lh$ and $g_2=L'h$. Now $\text{Syz}_1(R/J_2)_{\leq n+2}$ can be generated by syzygies $s_1,s_2,s_3$ corresponding to the relations
\[ g_1q=\ell'f_1, \hspace{10pt} g_2q=\ell'f_2, \hspace{5pt} \text{and} \hspace{5pt} L'g_1=Lg_2,\]
respectively. It is easily checked that 
\[ L' s_1 - L s_2 + \ell' s_3 =0,\]
which means that $\beta_{3,n+3}(R/(q,f_1,f_2))>0$. This would imply $\beta_{3,n+3}(R/J_2)>0$, which is not true, so we have a contradiction. Thus $\beta_{i,a_1}(R/J_2)=0$ for $1\leq i\leq 2$ when $a_1=n+2$.

\par Lastly, assume $a_1\geq n+3$ and that $\beta_{i,a_1}(R/J_2)>\beta_{i,a_1}(R/J)$ for $i=1,2$. Recall that $(\ref{J1 res no quad})$ gives the graded betti numbers of $J_1$. Since $a_1+a_3\geq 3n+7$, we have $a_2< a-3n-5$, which means the degree $a_2$ generator of $C_1$ must be a weak associate of the degree $a_2$ generator of $C$. Thus by Lemma \ref{ci shift trims}.ii, we know $\beta_{i,a_3}(R/J_2)=\beta_{i,a_3}(R/J)$. However, $g_3(J_2)=a_3$, so $a_3\in \{a-3n-5, a-2n-7, a-2n-4\}$. Since $a_1+a_2\geq 2a_1\geq 2n+6$, either $a_3=a-3n-5$ or $a_3=a-2n-7$.

\noindent (i.) $a_3=a-3n-5$. Then $a_1+a_2=3n+5$, and we may write $a_1=n+c$, where $3\leq c\leq n$. Thus $a_3>2n+3>a_1+2$, which by the Koszul argument implies $a_1+1\leq a_2 \leq a_2+2$ and $\beta_{i,a_2}(R/J_2)>\beta_{i,a_2}(R/J)$ for $i=1,2$. This implies $2c-4\leq n\leq 2c-3$. If $a_2+1\leq 2n+4\leq a_2+2$, then by the previous inequation, we have $n\leq 3$, which is impossible. Thus by the Koszul argument, $a_2+1\leq 2n+3\leq a_2+2$, which implies $4\leq n\leq 5$.  
\begin{itemize}
\item $n=4$. It follows that $c=4$, $a_1=8$, and $a_2=9$.
Then $\beta_{2,10}(R/J_2)=0$, and for any complete intersection $C_2\subseteq J_2$ with type $(2,9,12)$, the quadric yields a minimal Koszul relation with the degree $9$ and degree $12$ forms. If $J_3=C_2:J_2$, then $\beta{2,j}(R/J_3)=0$ for $11\leq j\leq 13$, but $\beta_{1,11}(R/J_3)=0$, which is a contradiction. 

\item $n=5$. It follows that $c=4$, $a_1=9$, and $a_2=11$.
Similar to the previous case, there is a complete intersection $C_2\subseteq J_2$ with type $(2,9,14)$ such that the quadric yields a minimal Koszul relation with the degree $9$ and degree $14$ forms. Then $J_3$ is an almost complete intersection containing a complete intersection $C_3$ with type $(2,8,11)$. Set $J_4=C_3:J_3$. The $\beta_{3,j}(R/J_4)=0$ unless $j=16$. Further $\beta_{2,3}(R/J_4)=2$ and $\beta_{1,2}(R/J_4)=3$. Thus the quadrics form a grade $2$ ideal, implying $J_4$ contains a complete intersection of type $(2,2,11)$, which contradicts Lemma \ref{max socle shift}.
\end{itemize}

\noindent (ii.) $a_3=a-2n-7$. Then $a_1+a_2=2n+7$, so $a_1=n+3$ and $a_2=n+4$. Then $\beta_{2,j}(R/J_2)=0$ for $n+5\leq j\leq n+6$, so by the Koszul argument we have $\beta_{1,n+4}(R/J_2)=0$, leaving $\beta_{2,n+4}(R/J_2)=1$. Further, $\beta_{1,n+3}(R/J_2)=2$ and $$p:=\sum_{n+4\leq j\leq n+5}\beta_{2,j}(R/J_2)=1,$$ 
but the Koszul argument applied to the quadric and both degree $n+3$ forms would imply $p\geq 2$. Thus we have a contradiction. \\


\noindent \textbf{Step 2:} Show $\beta_{i,a_3}(R/J_2)=\beta_{i,a_3}(R/J)$ for $i=1,2$. \\

Recall $g_3(J)=2n+4$, so we may assume $a_3\geq 2n+5$. Further, $a_2+a_3\geq 3n+6$, so $a-3n-5>a_1$. Thus the degree $a_2$ generator in $C_1$ is not a weak associate in $J_1$ of the degree $a_2$ generator of $C$. Since $g_2(J_1)=a_2$, in (\ref{J1 res no quad}) we see that $a_2 \in \{a-3n-5,a-2n-7,a-2n-4\}$. But since $a_1+a_3\geq 2n+8$, it is only possible that $a_2 = a-3n-5$, in which case $3\leq a_1\leq n$. Thus the degree $a_1$ generator of $C$ is not a minimal generator of $J$, but we know there was a trim of $R(-a_1)$ in the induced resolution of $J_2$. However, this trim cannot be from the minimal Koszul relation in $J_1$ between a weak associate of the degree $a_2$ and $a_3$ generators of $C$. Hence there must be another minimal Koszul syzygy of degree $a_2+a_3$ in $J_1$, which by (\ref{J1 res no quad}) implies $a_2+a_3\in \{ a-3n-4, a-2n-6, a-2n-3, a-n-4, a-n-2\}$. Then $a_1\in \{3n+4, 2n+6, 2n+3, n+4,n+2\}$, which is a contradiction. Therefore, the degree $a_2$ generator of $C_1$ is a weak associate of the degree $a_2$ generator of $C$, which by \ref{ci shift trims} concludes Step 2. \\
 

\noindent \textbf{Step 3:} Show $\beta_{i,a_2}(R/J_2)=\beta_{i,a_2}(R/J)$ for $i=1,2$. \\

Suppose $\beta_{i,a_2}(R/J_2)>\beta_{i,a_2}(R/J)$ for $1\leq i\leq 2$. Recall $g_2(J)=n+1$, so we may assume $a_2\geq n+2$. Then $a_2+a+3\geq 3n+6$, so $a-3n-5> a_1$. Further, since $g_3(J_1)=a_3$ and we assumed that the degree $a_3$ generator of $C_1$ is not a weak associate of the degree $a_3$ generator of $C$, by (\ref{J1 res no quad}) we see $a_3\in \{a-3n-5, a-2n-7, a-2n-4\}$, which implies $a_1+a_2\in \{3n+5, 2n+7, 2n+4\}$. In particular, $n+2\leq a_2\leq 3n+2$. To finish Step 3 we examine a particular partition of this interval of $a_2$ and show each part yields a contradiction.
\begin{itemize}

\item $a_2=n+2$: Since $\sum_{n+3\leq j\leq n+5}\beta_{2,j}(R/J_2)=1$ and $\sum_{n+2\leq j\leq n+3} \beta_{1,j}(R/J_2)=2$, the Koszul argument applied to the quadric and degree $n+3$ generator of $J_2$ implies $\beta_{i,n+2}(R/J_2)=0$, which contradicts our starting assumption. 

\item $n+3\leq a_2\leq 2n$: The Koszul argument proves this case.  

\item $a_2=2n+1$: The quadric and degree $a_2$ form yield a minimal Koszul relation in $J_2$. If $J_3$ is a link of $J_2$ with respect to $(2,a_2,2n+4)$ that uses the minimal Koszul relation, then applying the Koszul argument to the quadric and degree $2n+3$ minimal generator of $J_3$ yields a contradiction. 

\item $2n+2\leq a_2\leq 2n+4$: Let $J_3$ be a link of $J_2$ with respect to $(2, n+1, 2n+4)$ (such complete intersection exists by Lemma \ref{g1q1}). Notice that the generators of degree $2$ and $2n+4$ yield a minimal Koszul relation in $J_2$. Then $J_3$ has a free resolution $\mc{F}_{\bigcdot}''$ with graded free summands
\begin{align} \notag
F_1'' &= R(-2)^2 \oplus R(-n) \oplus R(-(n+3)) \oplus R(-(2n+4)) \\ \notag
F_2'' &= R(-3) \oplus R(-(n+1)) \oplus R(-(n+4)) \oplus R(-(2n+3)) \\ \notag
&\oplus R(-(2n+5))^2 \oplus R(-(3n+7-a_2)) \\ \notag
F_3'' &= R(-(3n+7-a_2)) \oplus R(-(2n+4)) \oplus R(-(2n+6)) 
\end{align}
Since in this case $n\leq 3n+7-a_2-3\leq n+2$, we have $\beta_{3,j}(R/J_3)\neq 0$ for some $j$ in the interval $n\leq j\leq n+2$. But Lemma \ref{g12l} implies the sub-ideal $(J_3)_{\leq n}$ is a grade $2$ perfect ideal, so $j=n+2$. Then the degree $n+3$ minimal generator of $J_3$ is a linear multiple of a socle generator, yielding a contradiction with Lemma \ref{relations from socle}. 

\item $a_2=2n+5$: Recall $a_1>2$ and $a_1+a_2\in \{3n+5,2n+7,2n+4\}$. Thus the only possibility is that $a_1+a_2 = 3n+5$, in which case $a_1 = n$. Since $\beta_{1,n}(R/J)=0$, the degree $a_1$ generator of $C$ lies in $\mf{m}J$. However, we know that $R(-a_1)$ trimmed in Ferrand's induced resolution of $J_2$ by Step 1, and we have assumed that the degree $a_3$ generator of $C_1$ is not a weak associate of the degree $a_3$ generator of $C$. Therefore, there must be another minimal Koszul relation in $J_2$ of degree $a_2+a_3$. In particular, $a_2+a_3\in \{a-3n-4, a-2n-6, a-2n-3, a-n-4, a-n-2, a_1+a_3\}$, which implies $a_1=a_2$ or $n=a_1\in \{n+2, n+4, 2n+3, 2n+6, 3n+4\}$. Thus $a_1=a_2$, which implies $n=2n+5$, which is a contradiction. 

\item $2n+6\leq a_2\leq 3n+2$: Lemma \ref{max degree relations} proves this case. 
\end{itemize}

We have now exhausted all possibilities for Step 3, so we may conclude that $\beta_{i,a_j}(R/J_2)=\beta_{i,a_j}(R/J)$ for $1\leq i\leq 2$ and $1\leq j\leq 3$. Therefore, $J_2$ satisfies $(\star)$.

\end{proof}




\begin{thebibliography}{9}

\bibitem{B87}
    A.E. Brown, 
    \textit{A structure theorem for a class of grade three perfect ideals},
    J. Algebra \textbf{105} (1987), 308-327.
  
\bibitem{B81} 
    R.-O. Buchweitz, 
    \textit{Contributions \`a la th\`eorie des singularit\'es}, 
    Thesis, l'Universit\'e de Paris, 1981.
  
  
  
\bibitem{Chong15}
    K.F.E. Chong, 
    \textit{Face vectors and Hilbert functions},
    Thesis, Cornell Univ., 2015.
  
\bibitem{Chong16}
    K.F.E. Chong, 
    \textit{An application of liaison theory to to Eisenbud-Green-Harris conjecture}, 
    J. Algebra \textbf{445} (2016), 221-231.
    
\bibitem{EGH96}
    D. Eisenbud, M. Green, and J. Harris,
    \textit{Cayley-Bacharach theorems and conjectures},
    Bull. Amer. Math. Soc. \textbf{33} (1996), 295-324.
  
\bibitem{G54}
    F. Gaeta,
    \textit{D\'etermination de la chaine syzyg\'etique des id\'eaux matriciels parfaits et son application \`a la posulation de leurs vari\'et\'es alg\'ebriques associ\'ees},
    C.R. Acad. Sci. Paris \textbf{234} (1954), 1833-1835.
    
\bibitem{GHMS07}
    A.V. Geramita, T. Harima, J.C. Migliore, and Y.S. Shin,
    \textit{The Hilbert function of a level algebra},
    Mem. Am. Math. Soc. 186 (2007) vi+139.
  
\bibitem{H02}
    R. Hartshorne,
    \textit{Some examples of gorenstein liaison in codimension three},
    Collect. Math. \textbf{53} (2002), 21-48.

\bibitem{HSS06}
    R. Hartshorne, I. Sabadini, and E. Schlesinger,
    \textit{Codimension 3 arithmetically gorenstein subschemes of projective N-space},
    Ann. Inst. Fourier \textbf{58} (2006).
 
\bibitem{H80} 
    J. Herzog,
    \textit{Deformationen von Cohen-Macaulay Algebren},
    J. Reine Angew. Math. 318 (1980), 83-105. 
    
\bibitem{HK82}   
    C. Huneke,
    \textit{Linkage and Koszul homology of ideals},
    Amer. J. Math. 104 (1982), 1043-1062.
    
\bibitem{HU85}
    C. Huneke and B. Ulrich,
    \textit{Divisor class groups and deformations},
    Amer. J. Math. 107 (1985), 1265-1303.
 
\bibitem{HU87}
    C. Huneke and B. Ulrich,
    \textit{The structure of linkage},
    Ann. Math. \textbf{126} (1987), 277-334.
  
\bibitem{HU88}
    C. Huneke and B. Ulrich,
    \textit{Algebraic linkage},
    Ann. Math. \textbf{126} (1988), 277-334.
    
\bibitem{HU89}
    C. Huneke and B. Ulrich,
    \textit{Powers of licci ideals},
    in Commutative Algebra (Berkeley), Math. Sci. Res. Int. Publ. 15, Springer, 1989, pp. 339-346.
    
\bibitem{HU92}
    C. Huneke and B. Ulrich,
    \textit{Local properties of licci ideals},
    Math. Z. 211 (1992), 129-154.
 
\bibitem{HU07}
    C. Huneke and B. Ulrich,
    \textit{Liaison of Monomial Ideals},
    Bull. London Math. Soc. \textbf{39} (2007), 384-392.
  
\bibitem{HMNU07}
    C. Huneke, J. Migliore, U. Nagel, and B. Ulrich,
    \textit{Minimal homogeneous liaison and licci ideals}, in: "Algebra, Geometry and their Interactions (Notre Dame 2005)," Contemp. Math. \textbf{448} (2007), 129-139.
    
\bibitem{J98}
    M. Johnson,
    \textit{Linkage and sums of ideals},
    Trans. Amer. Math. Soc. 350 (1998), 1913-1930.
    
\bibitem{J01}
    M. Johnson,
    \textit{Licci ideals and the non-almost complete intersection locus},
    Proc. Amer. Math. Soc. 129 (2001), 1-7.
  
\bibitem{KM81}
    A. Kustin and M. Miller, 
    \textit{A general resolution for grade four gorenstein ideals},
    Manuscripta Math. \textbf{35} (1981), 221-269.
  
\bibitem{KM82}
    A. Kustin and M. Miller,
    \textit{Structure theory for a class of grade four gorenstein ideals},
    Trans. Amer. Math. Soc. \textbf{270} (1982), 287-307.
    
\bibitem{M12}
    J. Migliore,
    \textit{Introduction to liaison theory and deficiency modules},
    Vol. 165. Springer Science \& Business Media (2012).
  
\bibitem{MN08}
    J. Migliore and U. Nagel,
    \textit{Minimal links and a result of Gaeta},
    Prog. Math. \textbf{280}.
  
\bibitem{PS74}
    C. Peskine and L. Szpiro, 
    \textit{Liaison des vari\'et\'es alg\'ebriques. I}, 
    Invent. Math \textbf{26} (1974), 271-302.
    
\bibitem{U87}
    B. Ulrich,
    \textit{Vanishing of cotangent functors},
    Math. Z., 196 (1987), no. 4, 463-484.
    
\bibitem{U89}
    B. Ulrich,
    \textit{On licci ideals},
    Contemp. Math., 88, pp. 85-94, Amer. Math. Soc., Providence. RI, 1989.
    
\bibitem{W94}
    C. Weibel, 
    \textit{An Introduction to Homological Algebra},
    Cambridge Studies in Advanced Mathematics. Cambridge: Cambridge University Press, 1994.
  
\bibitem{W73}
    J. Watanabe, 
    \textit{A note on gorenstein rings of embedding codimension three},
    Nagoya Math. J. \textbf{50} (1973), 227-232.

\end{thebibliography}
\end{document}